\numberwithin{equation}{section}
\numberwithin{figure}{section}
\theoremstyle{plain}
\newtheorem{theorem}{Theorem}[section]
\theoremstyle{definition}
\newtheorem{definition}[theorem]{Definition}
\theoremstyle{definition}
\newtheorem{remark}[theorem]{Remark}
\theoremstyle{plain}
\theoremstyle{plain}
\newtheorem{lemma}[theorem]{Lemma}
\theoremstyle{plain}
\newtheorem{proposition}[theorem]{Proposition}
\theoremstyle{definition}
\newtheorem{example}[theorem]{Example}
\theoremstyle{definition}
\theoremstyle{definition}
\newtheorem*{definition*}{Definition}
\newtheorem*{proposition*}{Proposition} 
\newtheorem*{remark*}{Remark}
\newtheorem*{example*}{Example}
\newtheorem*{problem*}{Problem}
\newtheorem*{question*}{Question}
\newtheorem*{theorem*}{Theorem}
\newtheorem*{lemma*}{Lemma}
\newtheorem*{corollary*}{Corollary}
\newtheorem*{Acknowledgments*}{Acknowledgments}
\renewenvironment{proof}[1][\proofname]{\medskip \noindent {\bfseries #1. }}{\hfill \qedsymbol\medskip}
\newcommand{\labeltext}[2]{
\@bsphack
\csname phantomsection\endcsname
\def\@currentlabel{#1}{\label{#2}}
\@esphack
}
\def\dashint{\,\ThisStyle{\ensurestackMath{%
  \stackinset{c}{.2\LMpt}{c}{.5\LMpt}{\SavedStyle-}{\SavedStyle\phantom{\int}}}%
  \setbox0=\hbox{$\SavedStyle\int\,$}\kern-\wd0}\int}
\DeclareRobustCommand{\SkipTocEntry}[5]{}
\newcommand{\mQ}{\mathbb{Q}}   
\newcommand{\mR}{\mathbb{R}}   
\newcommand{\mC}{\mathbb{C}}   
\newcommand{\mN}{\mathbb{N}}   
\newcommand{\abs}[1]{\lvert #1 \rvert}  
\newcommand{\norm}[1]{\lVert #1 \rVert}  
\newcommand{\ol}[1]{\overline{#1}}
\newcommand{\mD}{\mathscr{D}}
\newcommand{\mH}{\mathcal{H}}
\newcommand{\mL}{\mathcal{L}}
\newcommand{\calS}{\mathcal{S}}
\newcommand{\bfi}{\mathbf{i}}
\newcommand{\id}{\mathrm{Id}}
\newcommand{\p}{\partial}
\DeclareMathOperator{\supp}{supp}
\newcommand{\rmd}{\mathrm{d}}
\begin{document}

\title[On scattering behavior of corner domains: part II]{On scattering behavior of corner domains with anisotropic inhomogeneities: part II}  

\begin{sloppypar}

\begin{abstract}
We study the scattering behavior of an anisotropic inhomogeneous Lipschitz medium at a fixed wave number, continuing our previous work \cite{KSS23Anisotropic} and using free boundary techniques from \cite{SS25vanishingcontrast}. Our main results can be categorized into two distinct cases. 
In the first case, we show that in two dimensions, piecewise $C^{1}$ or convex penetrable obstacles with corners, and in higher dimensions, obstacles with edge points, always induce nontrivial scattering for any incoming wave. In the second case, we prove that piecewise $C^{1}$ obstacles with corners in two dimensions (and with edge points in higher dimensions) with angles $\notin\pi\mQ$ always produce nontrivial scattering for any incoming wave. 
\end{abstract}

\subjclass[2020]{35J15, 35P25, 35R35}
\keywords{free boundary, corners always scatter, harmonic polynomials, blowup solutions}

\author[Kow]{Pu-Zhao Kow\,\orcidlink{0000-0002-2990-3591}}
\address{Department of Mathematical Sciences, National Chengchi University, Taipei 116, Taiwan}
\email{pzkow@g.nccu.edu.tw} 

\author[Salo]{Mikko Salo\,\orcidlink{0000-0002-3681-6779}}
\address{Department of Mathematics and Statistics, P.O. Box 35 (MaD), FI-40014 University of Jyv\"{a}skyl\"{a}, Finland}
\email{mikko.j.salo@jyu.fi}

\author[Shahgholian]{Henrik Shahgholian\,\orcidlink{0000-0002-1316-7913}}
\address{Department of Mathematics, KTH Royal Institute of Technology, SE-10044 Stockholm, Sweden}
\email{henriksh@kth.se}

\maketitle

\tableofcontents{}

\section{Introduction}

\subsection{Mathematical model}

Let $D$ be a bounded Lipschitz domain in $\mR^{n}$ (where $n\ge 2$) such that $\mR^{n}\setminus\overline{D}$ is connected. Within this domain, let $\rho\in L^{\infty}(D)$ be a positive real-valued function. Additionally, let $A\in (C^{0,1}(\overline{D}))^{n\times n}$ be a real  matrix-valued symmetric function, satisfying the condition of uniform ellipticity 
\begin{equation}
c_{\rm ellip}^{-1}\abs{\xi}^{2} \le \xi\cdot A(x)\xi \le c_{\rm ellip}\abs{\xi}^{2} \quad \text{for a.e. $x\in D$ and all $\xi\in\mR^{n}$} \label{eq:ellipticity}
\end{equation}
for some constant $c_{\rm ellip}>0$. Under the assumption that the medium outside $D$ is homogeneous, if we illuminate the anisotropic medium $(D,A,\rho)$ with an incident field $u^{\rm inc}$ having a fixed wave number $\kappa>0$ that satisfies 
\begin{equation}
(\Delta+\kappa^{2})u^{\rm inc}=0 \quad \text{in $\mR^{n}$,} \label{eq:incident}
\end{equation}
classical scattering theory (see, e.g. \cite{CCH23InverseScatteringTransmission,CK19scattering,KG08Factorization}) guarantees the existence of unique scattered field $u^{\rm sc} \in H_{\rm loc}^{1}(\mR^{n})$ that is outgoing (i.e., satisfies the outgoing Sommerfeld radiation condition \cite[Definition~1.1]{KSS23Anisotropic}). The total field $u^{\rm to}=u^{\rm sc}+u^{\rm inc}$ satisfies 
\begin{equation*}
\left(\nabla\cdot\tilde{A}(x)\nabla + \kappa^{2}\tilde{\rho}(x)\right) u^{\rm to} = 0 \quad \text{in $\mR^{n}$,} 
\end{equation*}
where 
\begin{equation*}
\tilde{A} = A\chi_{\overline D} + \id\chi_{\mR^{n}\setminus \overline{D}} \quad \text{and} \quad \tilde{\rho} = \rho\chi_{\overline{D}} + \chi_{\mR^{n}\setminus \overline{D}}. 
\end{equation*}

\begin{remark}
For notational convenience, throughout the paper we use $A$ to denote $\tilde A$.
\end{remark}

If $A(x_0) \neq \id$ or $\rho(x_0) \neq 1$ at some point $x_0 \in \p D$, then one might expect that the obstacle $(D, A, \rho)$ is strong enough to produce scattering effects for many incident waves. Moreover, if the boundary of $D$ has singularity at $x_0$, it might be true that every incident wave scatters nontrivially (i.e.\ for any $u^{\mathrm{inc}}$ solving \eqref{eq:incident},  $u^{\mathrm{sc}}|_{\mR^n \setminus \ol{D}}$ is not identically zero).

Conversely, if the anisotropic medium $(D,A,\rho)$ is nonscattering with respect to an incident field $u^{\rm inc}$ in the sense that $u^{\rm sc}=0$ in $\mR^{n}\setminus\overline{D}$, then the scattered field $u^{\rm sc}$ satisfies  
\begin{equation}
\left\{\begin{aligned}
& (\nabla\cdot A(x)\nabla+\kappa^{2}\rho(x))u^{\rm sc} = -(\nabla\cdot (A(x)-\id)\nabla+\kappa^{2}(\rho(x)-1))u^{\rm inc} && \text{in $D$,} \\ 
& u^{\rm sc}=0 ,\quad \nu\cdot A\nabla u^{\rm sc} = \nu\cdot(\id-A)\nabla u^{\rm inc} && \text{on $\partial D$,}  
\end{aligned}\right. \label{eq:scattered-field}
\end{equation}
where $\nu$ is the inward pointing  unit normal vector to $\partial D$. One could then hope to show that $D$ cannot have boundary singularities. In the earlier works \cite{CVX23RegularityITEP, KSS23Anisotropic},  such results were established for some anisotropic scatterers by using the free boundary techniques developed in \cite{CakoniVogelius,SS21NonscatteringFreeBoundary} for nonscattering problems (see also \cite{AI96Conductivity} for the Calder{\'o}n problem). In the present article we continue this line of research by covering cases not included in \cite{KSS23Anisotropic}, based on the new methods introduced in \cite{SS25vanishingcontrast}.

\subsection{Definitions and terminology} 

Before we proceed, let us recall the key definitions, assumptions, and formalism pertaining to corner domains, as referenced in the paper's title. 
Suppose that $x_{0}\in\partial D$ is the boundary point of interest. In some of our results, we assume that the contrast 
\begin{equation}
h(x) := \kappa^{2}(\rho(x)-1)\chi_{D} \label{eq:contrast-h}
\end{equation}
satisfies the following non-degeneracy condition at $x_{0}\in\partial D$: 
\begin{equation}
\text{there is $r>0$ such that $h\in C^{\alpha}(\overline{D}\cap B_{r}(x_{0}))$ and $h(x_{0})\neq 0$.} \label{eq:non-degeneracy}
\end{equation}
Recall the following definition for $n=2$.

\begin{definition}
Let $k\in\mN\cup\{\infty\}$. An open set $D$ in $\mR^{2}$ is said to have a \emph{piecewise $C^{k}$ boundary} if, for any $x_{0}$, there exist $\delta>0$ and a rigid motion $\Phi:B_{\delta}(x_{0})\rightarrow V$ into an open set $V\subset\mR^{2}$ such that $\Phi(x_{0})=0$ and 
\begin{equation*}
\Phi(D\cap B_{\delta}(x_{0})) = \{x_{2}>\eta(x_{1})\}\cap B_{r}(0), 
\end{equation*}
where $\eta\in C([-r,r])$ and both $\eta|_{[-r,0]} $ and $\eta|_{[0,r]}$ are $C^{k}$-functions up to the endpoints of the respective closed intervals. 
\end{definition}

We also recall the following definition for $n\ge 3$. 

\begin{definition} 
Let $D$ be an open set in $\mR^{n}$ with $n\ge 3$. A boundary point $x_{0}\in\partial D$ is said to be an \emph{edge point} if there is $\delta>0$ and a $C^{1}$ diffeomorphism $\Phi: B_{\delta}(x_{0})\rightarrow V$ onto an open set $V\subset\mR^{n}$ such that $\Phi(x_{0})=0$, $(\nabla\otimes\Phi)(x_{0})$ is a rotation matrix, and 
\begin{equation*}
\Phi(\overline{D}\cap B_{\delta}(x_{0})) = (S\times\mR^{n-2}) \cap \Phi(B_{\delta}(x_{0})), 
\end{equation*}
where $(\nabla\otimes\Phi)_{ij}=\partial_{i}\Phi_{j}$, $S=\{(r\cos\theta,r\sin\theta):r>0,0\le\theta\le\theta_{0}\}$ is a closed sector with angle $\theta_{0}\in(0,2\pi)\setminus\{\pi\}$. In this case, we also refer to the edge point as having angle $\theta_{0}$. 
\end{definition}

\subsection{Methodology} \label{subsec_methodology}

As noted in \cite{KSS23Anisotropic}, the anisotropic nonscattering problem \eqref{eq:scattered-field} can be reformulated as a Bernoulli-type free boundary problem of the form 
\begin{subequations} \label{eq:scattered-field-distribution-form-recall}
\begin{equation}
(\nabla\cdot A(x)\nabla+\kappa^{2}\rho(x))u^{\rm sc} = f \mL^{n}\lfloor D + g \mH^{n-1}\lfloor\partial D \text{ in $\mR^n$},\quad u^{\rm sc}|_{\mR^{n}\setminus\overline{D}}=0, 
\end{equation}
where 
\begin{equation}
f = -(\nabla\cdot (A(x)-\id)\nabla+h)u^{\rm inc} ,\quad g = \nu\cdot(A-\id)\nabla u^{\rm inc} \label{eq:coefficient-f}
\end{equation}
\end{subequations} 
and $h(x) := \kappa^{2}(\rho(x)-1)\chi_{D}$, see \eqref{eq:scattered-field-distribution-form} below. 
Hereafter, we denote $\mL^{n}\lfloor D \equiv \chi_{D}$ the $n$-dimensional Lebesgue measure restricted to $D$, and by $\mH^{n-1}\lfloor\Gamma$ the $(n-1)$-dimensional Hausdorff measure restricted to $\Gamma$. 
Our main strategy is to consider the blowup limit of problem \eqref{eq:scattered-field-distribution-form-recall}. To do this, we suppose that near a boundary point $x_{0}=0$ (say) the quantities $u^{\rm inc}$, $A-\id$, and $\rho-1$ have in $\overline{D}$ the expansions 
\begin{align*}
u^{\rm inc} &= H + R^{(m)}, \\
A-\id &= B + R^{(b)}, \\
\rho-1 &= \eta + R^{(p)},
\end{align*}
where $H, B, \eta$ are homogeneous polynomials of degree $m$, $b$ and $p$, respectively, and $H$ is harmonic (since $u^{\rm inc}$ solves $(\Delta+\kappa^2) u^{\rm inc} = 0$). The remainder terms are assumed to vanish at $0$ faster than the main terms, together with relevant derivative bounds.


In the discussion below we will assume that $m \geq 2$ to exclude certain special cases. It follows that near $0$, one has in $\overline D$ 
\begin{align*}
f &= -\nabla \cdot B \nabla H - \kappa^2 \eta H + R^{(f)}, \\
g &= \nu \cdot B \nabla H + R^{(g)},
\end{align*} 
where $R^{(f)}$ and $R^{(g)}$ vanish at $0$ faster than the other terms in $f$ and  $g$, respectively. Note that $\nabla \cdot B \nabla H$ has degree $m+b-2$, $\eta H$ has degree $m+p$, and $B \nabla H$ has degree $m+b-1$. 
We classify the blowup equation corresponding to \eqref{eq:scattered-field-distribution-form-recall} into two cases: $b\ge p+2$ and $0 \le b<p+2$. Theorems for the case $b\ge p+2$ are stated in \Cref{sec:C11-blowup}, followed by the case $0 \le b<p+2$ in \Cref{sec:blowup-Bernoulli}.

When $b\ge p+2$, we define the blowup sequence $u_r(x) = u^{\rm sc}(r x)/r^{m+p+2}$ at 0 (when it exists). The corresponding blowup equation is 
\[
\Delta u = -\kappa^2 \eta H \chi_{\{ u \neq 0 \}}.
\]
In this scenario, only the contribution from $\rho-1$ survives in the blowup, while the Bernoulli condition vanishes. 

In the case $0 \le b<p+2$, we define the blowup sequence $u_r(x) = u^{\rm sc}(rx)/r^{m+b}$ at 0 (when it exists) that  leads to the blowup equation 
\[
\Delta u = - (\nabla \cdot B \nabla H) \mL^{n}\lfloor D_{0} + (B \nabla H \cdot \nu_0)\mH^{n-1}\lfloor \partial D_{0}, \qquad u|_{\mR^n \setminus \ol{D}_0} = 0,
\]
where $D_0$ is the blowup of $D$ at $x_{0}=0$ (assuming it exists), and $\nu_0$ is the unit outer normal to $\partial D_0$. In this case the contribution from $\rho-1$ disappears in the blowup, but the Bernoulli condition survives.

\subsection{Statements of the main results and related work\label{sec:main-theorems}} 

The discussion in Section \ref{subsec_methodology} indicates that the results will be divided into several cases according to the behavior of $A$, $\rho$ and $u^{\rm inc}$ at $x_{0}$. Before presenting the formal statements of the main results, we briefly highlight them as follows. We say that $A(x_0) \cong \id$ if $\abs{A(x)-\id}\le C\abs{x-x_{0}}^{2+\alpha}$ and $\abs{\nabla A(x)}\le C\abs{x-x_{0}}^{1+\alpha}$, 
near $x_0$ in $\overline D$, for some $\alpha > 0$.

\begin{itemize}
\item If $A(x_{0}) \cong \id$ and $\rho(x_{0})\neq 1$, any corner with opening angle $\neq\pi$ scatters every incident wave nontrivially (\Cref{thm:1}). 
\item If $A(x_{0}) \cong \id$ but $\rho(x_{0}) = 1$, corners of any angle may be invisible to certain incident waves (\Cref{exa:vanishing-condition}).  
\item If $A(x_{0})\neq\id$ and $\rho(x_{0})=1$, corners with angles $\notin\mQ\pi$ scatter every incident wave nontrivially (\Cref{thm:3,thm:4}), whereas corners with angles $\in\mQ\pi$ may be invisible for some incident waves (\cite[Example~1.5]{KSS23Anisotropic}). 
\item The special case $A=a\id$ and $\rho=a$ with $a\neq 1$, which satisfies $A(x_{0})\neq\id$ and $\rho(x_{0})\neq 1$, was studied in \cite{CHLX24examples}, showing that the scattering behavior is linked to the interior transmission eigenvalue problem. 
\end{itemize}
We now proceed to the formal statements of the main results.

\subsubsection{Vanishing Bernoulli condition\label{sec:C11-blowup}}

Our first theorem states that convex\footnote{We recall that bounded convex domains always have Lipschitz boundary \cite[Corollary~1.2.2.3]{Grisvard2011PDE}.} or any  piecewise $C^{1}$ planar obstacle whose boundary has a singular (non-$C^{1}$) point always scatter: 

\begin{theorem}\label{thm:1}
Let $D$ be a bounded open set in $\mR^{2}$, and suppose that 
\begin{itemize}
\item either $D$ is convex, or 
\item $D$ is simply connected and has piecewise $C^{1}$ boundary. 
\end{itemize}
Let $\kappa>0$, let $\rho\in L^{\infty}(D)$ be a positive real-valued function, and let $A\in (C^{1}
(\overline{D}))^{n\times n}$ be a real symmetric matrix-valued function, satisfying the condition of uniform ellipticity \eqref{eq:ellipticity}. Suppose that there exists $x_{0}\in\partial D$ such that 
\begin{enumerate}
\renewcommand{\labelenumi}{\theenumi}
\renewcommand{\theenumi}{(\roman{enumi})}
\item \label{itm:nondegeneracy-contrast} the contrast $h$ given in \eqref{eq:contrast-h} satisfies the non-degeneracy condition \eqref{eq:non-degeneracy} at $x_{0}$, and 
\item \label{itm:strong-decay1} the coefficient $A$ satisfies $\abs{A(x)-\id}\le C\abs{x-x_{0}}^{2+\alpha}$ and $\abs{\nabla A(x)}\le C\abs{x-x_{0}}^{1+\alpha}$.
\end{enumerate}
If $\partial D$ is not $C^{1}$ near $x_{0}$, then the anisotropic medium $(D,A,\rho)$ scatters every incident wave nontrivially. 
\end{theorem}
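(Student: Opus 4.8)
The plan is to argue by contradiction: assume the medium $(D,A,\rho)$ does not scatter some incident wave $u^{\rm inc}$, so the scattered field $u^{\rm sc}$ satisfies the overdetermined system \eqref{eq:scattered-field} and, after reformulation, the free boundary problem \eqref{eq:scattered-field-distribution-form-recall}. Near $x_0$ we expand $u^{\rm inc} = H + R^{(m)}$ with $H$ a nonzero harmonic homogeneous polynomial of some degree $m$, and use hypothesis \eqref{itm:strong-decay1} to guarantee that $A - \id$ decays fast enough ($A(x_0) \cong \id$) that the matrix contribution is negligible in the blowup. Since $h$ is $C^\alpha$ near $x_0$ with $h(x_0) \neq 0$ by \eqref{itm:nondegeneracy-contrast}, we have $\rho(x_0) \neq 1$; the expansion of $\rho - 1$ starts with the constant $\eta = h(x_0)/\kappa^2 \neq 0$, i.e. $p = 0$, while $b \geq 2 + \alpha > p + 2$. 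We are therefore in the regime $b \geq p + 2$ of \Cref{subsec_methodology}, and the blowup at scale $r^{m+2}$ (here $m + p + 2 = m+2$) is governed by the equation $\Delta u = -\kappa^2 \eta H \chi_{\{u \neq 0\}}$, with the Bernoulli condition dropping out.

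The first substantive step is to show the blowup sequence $u_r(x) = u^{\rm sc}(rx)/r^{m+2}$ (for a subsequence $r = r_j \to 0$) actually converges, nontrivially, to a global solution $u_0$ of the blowup equation. This requires an a priori growth estimate $\|u^{\rm sc}\|_{L^\infty(B_r(x_0))} \lesssim r^{m+2}$ obtained from elliptic regularity applied to \eqref{eq:scattered-field} together with the decay of the remainders $R^{(f)}, R^{(g)}$, and a nondegeneracy estimate from below (the contrast being nonzero at $x_0$ forces $u^{\rm sc}$ to be genuinely of order $r^{m+2}$, not smaller — otherwise $u^{\rm sc}$ would have to vanish to infinite order, contradicting, e.g., the fact that $H \neq 0$ drives the inhomogeneity). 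These estimates, of Almgren/monotonicity type, are exactly the free-boundary machinery referenced from \cite{SS25vanishingcontrast}; I would invoke the corresponding theorems in \Cref{sec:C11-blowup}. The limit $u_0$ is supported (outside its zero set) in the blowup cone $D_0$ of $D$ at $x_0$, which — because $D$ is convex or piecewise $C^1$ with a non-$C^1$ point at $x_0$ — is a sector of opening angle $\theta_0 \in (0,\pi) \cup (\pi, 2\pi)$, and we still have $u_0 = 0$ on $\mR^2 \setminus \overline{D_0}$.

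The crux is then a rigidity statement: a global solution $u_0$ of $\Delta u_0 = -\kappa^2 \eta H \chi_{\{u_0 \neq 0\}}$ that vanishes outside a sector $D_0$ of angle $\theta_0 \neq \pi$, with $H$ harmonic homogeneous and $\eta \neq 0$ a nonzero constant, cannot exist — i.e. we derive a contradiction. I would obtain this by examining the equation on the complement: since $u_0 \equiv 0$ on $\mR^2 \setminus \overline{D_0}$ but $\Delta u_0 = -\kappa^2\eta H$ on $D_0$ with $u_0$ vanishing on $\partial D_0$ to second order (both Dirichlet and Neumann data vanish, as in \eqref{eq:scattered-field} with $A \cong \id$), one can write $u_0 = w + v$ where $w$ is a homogeneous polynomial with $\Delta w = -\kappa^2 \eta H$ (degree $m+2$) and $v$ is harmonic; matching the homogeneity and the vanishing Cauchy data on the two rays bounding the sector forces, via a Fourier/separation-of-variables analysis on the sector, an incompatibility unless $\theta_0 = \pi$ or $H \equiv 0$. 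This is the step I expect to be the main obstacle: one must rule out \emph{all} harmonic polynomials $H$ and \emph{all} admissible degrees $m$, and handle the interaction between the particular solution's angular profile and the sector angle. The argument parallels the classical "corners always scatter" rigidity (Blåsten–Päivärinta–Sylvester, Elschner–Hu) but now for the Poisson-type blowup equation; the convex case additionally needs the observation that a convex non-smooth boundary point has a well-defined tangent cone that is a proper sector, so the dichotomy $\theta_0 < \pi$ versus $\theta_0 > \pi$ is exhaustive. Once the rigidity is in place, the contradiction completes the proof.
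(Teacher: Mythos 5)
Your overall strategy coincides with the paper's: argue by contradiction, pass to the distributional free boundary formulation \eqref{eq:scattered-field-distribution-form-recall}, observe that hypotheses \ref{itm:nondegeneracy-contrast}--\ref{itm:strong-decay1} put you in the regime $b\ge p+2$ with $p=0$, so that the surface (Bernoulli) term drops out and the blowup $u_r=u^{\rm sc}(rx)/r^{m+2}$ is governed by $\Delta v=-\kappa^{2}\eta H\chi_{\{v\neq 0\}}$. Where you diverge is the endgame, and this is where the gaps are. You derive the contradiction from the assertion that the tangent cone $D_0$ at $x_0$ is a proper sector of angle $\theta_0\neq\pi$ and that no nontrivial blowup can be supported in it. For the piecewise $C^1$ case with $\theta_0<\pi$ this can be made to work, but: (1) for a \emph{convex} $D$, ``$\partial D$ is not $C^{1}$ near $x_0$'' does not imply $x_0$ is itself a corner --- $x_0$ may be a point of differentiability with corners accumulating at it, in which case your blowup cone is a half-plane and your rigidity gives nothing; and you cannot relocate to a nearby corner, since condition \ref{itm:strong-decay1} is anchored at $x_0$ and does not transfer. (2) For reflex corners ($\theta_0>\pi$) a half-plane \emph{does} fit inside the sector, so containment of the support in $D_0$ is not by itself contradictory; you would additionally need the support to fill the whole sector, which requires the non-degeneracy estimate at every point of the cone (\Cref{lem:nondegeneracy}) --- your heuristic (``otherwise $u^{\rm sc}$ would vanish to infinite order'') is not the right reason; the actual argument is a compactness one showing a vanishing blowup would contradict $\Delta v=H$ inside the cone. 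The paper sidesteps both issues by proving the stronger statement that $\partial D$ is $C^{1}$ in a whole neighborhood of $x_0$ (\Cref{prop:2D-FB}): any blowup limit has half-plane support (\Cref{lem:SS25-lem3.12} combined with non-degeneracy and the exterior cone), this yields weak flatness of $\partial D$ at $x_0$ (\Cref{lem:SS25-lem4.3}), which rules out corners of any angle $\neq\pi$ and, for convex $D$, upgrades via \cite[Lemma~4.4]{SS25vanishingcontrast} to $C^1$ regularity near $x_0$ --- exactly contradicting the hypothesis.

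Two further points on the rigidity step you flag as the main obstacle. First, your separation-of-variables argument implicitly assumes the blowup limit is homogeneous of degree $m+2$; this is not automatic and is precisely what the variable-coefficient Weiss-type monotonicity (the functionals $W_A$ and $F_A$ leading to \Cref{lem:blowup-procedure}) delivers, together with the Lipschitz decay bound of \Cref{lem:regularity1}; the machinery of \cite{SS25vanishingcontrast} must be adapted to $A\neq\id$ and to the presence of the surface measure term, which is the content of Section~\ref{sec_2}. Second, granted homogeneity and full support, your claimed incompatibility ``unless $\theta_0=\pi$ or $H\equiv 0$'' is true, but it is exactly the classification of \Cref{lem:SS25-lem3.12} (\cite[Theorem~3.12]{SS25vanishingcontrast}), so it should be cited or actually proved (the $4\times 4$ Cauchy-data system on the two rays does have nonvanishing determinant for $\theta_0\neq\pi$); as written it remains an assertion, and the way you combine it with the corner geometry --- at the level of the blowup support rather than via weak flatness of $\partial D$ --- is what leaves the convex and reflex cases uncovered.
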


It is worth remarking that it seems plausible that  the assumptions $\abs{A(x)-\id}\le C\abs{x-x_{0}}^{2+\alpha}$ and $\abs{\nabla A(x)}\le C\abs{x-x_{0}}^{1+\alpha}$, can be relaxed to 
\begin{equation}\label{Dini}
\abs{A(x)-\id}=  \omega(|x - x_0|)\abs{x-x_{0}}^{2}, \qquad \abs{\nabla A(x)}=  \omega(|x - x_0|)\abs{x-x_{0}}^1
\end{equation}
where $\omega (r) $ is double-Dini, which is used for the monotonicity formula in this paper. Since this would be  a more technical result, we leave it out in this paper.

\begin{example}[Necessity of condition \eqref{eq:non-degeneracy}] \label{exa:vanishing-condition}

In the trivial example where $A \equiv \id$ and $\rho \equiv 1$, the obstacle $D$ is not present and no incident wave produces scattering effects (i.e.\ $u^{\rm sc} \equiv 0$ for any incident wave). In particular, $D$ can have corners of any type. This shows that it is not possible to drop the assumption \eqref{eq:non-degeneracy} in the theorem above.

Another related example is given in \cite[Section~3]{CVX23RegularityITEP} or \cite[Example~1.6]{KSS23Anisotropic} based on diffeomorphism invariance. Let $D$ be a bounded domain in $\mR^{2}$, with piecewise $C^{\infty}$ boundary, with a corner $x_{0}\in\partial D$. Let $\Phi:D\rightarrow D$ be a diffeomorphism such that $\Phi\in C^{\infty}(\overline{D})$, $\Phi^{-1}\in C^{\infty}(\overline{D})$, $\Phi(x)=x$ for all $x\in\partial D$ and $(\nabla\otimes\Phi)(x_{0})=\id$. Let 
\begin{equation*}
A=\Phi_{*}(\id) = \frac{(\nabla\otimes\Phi)(\nabla\otimes\Phi)^{\intercal}}{\abs{\det(\nabla\otimes\Phi)}}\circ\Phi^{-1} \in (C^{\infty}
(\overline{D}))^{2\times 2} 
\end{equation*} 
and 
\begin{equation*}
\rho=\Phi_{*}(1) = \frac{1}{\abs{\det(\nabla\otimes\Phi)}}\circ\Phi^{-1} \in C^{\infty}(\overline{D}) 
\end{equation*}
be the pushforwards by $\Phi$, which satisfy $A(x_{0})=\id$ and $\rho(x_{0})=1$. Let $u^{\rm inc}\not\equiv 0$ solves \eqref{eq:incident}. Choosing $v=u^{\rm inc}|_{D}$ and $u^{\rm to}=\Phi_{*}v$ gives a pair $(u^{\rm to},u^{\rm inc})$ satisfying 
\begin{equation*}
\left\{\begin{aligned}
& (\nabla\cdot A\nabla + \kappa^{2}\rho)u^{\rm to}=0 ,\quad (\Delta+\kappa^{2})u^{\rm inc}=0 && \text{in $D$,} \\ 
& u^{\rm to}=u^{\rm inc} ,\quad \nu\cdot A\nabla u^{\rm to} = \partial_{\nu}u^{\rm inc} && \text{on $\partial D$,} 
\end{aligned}\right. 
\end{equation*}
therefore the function $u^{\rm sc}:=u^{\rm to}-u^{\rm inc}$ verifies \eqref{eq:scattered-field}, i.e., the anisotropic medium $(D,A,\rho)$ is nonscattering with respect to such incident field $u^{\rm inc}$. This example does not contradict \Cref{thm:1} since the contrast $h$ defined in \eqref{eq:contrast-h} fails to satisfy the non-degeneracy condition \eqref{eq:non-degeneracy} at $x_{0}\in\partial D$. 
\end{example}

An analogous result for domains in $\mR^{n}$ with $n\ge 3$ that contain edge singularities is the following.

\begin{theorem}\label{thm:2}
Let $D$ be a bounded Lipschitz domain in $\mR^{n}$ with $n\ge 3$ such that $\mR^{n}\setminus\overline{D}$ is connected. Let $\kappa>0$, let $\rho\in L^{\infty}(D)$ be a positive real-valued function, let $A\in (C^{1}
(\overline{D}))^{n\times n}$ be a real symmetric matrix-valued function, satisfying the condition of uniform ellipticity \eqref{eq:ellipticity}. Assume that there exists $x_{0}\in\partial D$ such that conditions \ref{itm:nondegeneracy-contrast} and \ref{itm:strong-decay1} in \Cref{thm:1} are satisfied. If $\partial D$ contains an edge point $x_{0}\in\partial D$, then the anisotropic medium $(D,A,\rho)$ scatters every incident wave nontrivially. 
\end{theorem}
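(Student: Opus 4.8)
The plan is to argue by contradiction, following the proof of \Cref{thm:1} and adding a reduction from the wedge $S\times\mR^{n-2}$ down to the planar sector $S$. Suppose $(D,A,\rho)$ is nonscattering with respect to some $u^{\rm inc}\not\equiv 0$, so that $u^{\rm sc}$ solves \eqref{eq:scattered-field}, equivalently the free boundary problem \eqref{eq:scattered-field-distribution-form-recall}, in a neighborhood of the edge point $x_{0}$. Since this is a purely local statement, the global topological hypothesis on $D$ enters only through the definition of an edge point, which supplies the needed local structure of $\partial D$ near $x_{0}$. Normalize $x_{0}=0$; let $m$ be the vanishing order of $u^{\rm inc}$ at $0$ and $H_{0}\not\equiv 0$ its leading harmonic homogeneous part, of degree $m$. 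By \ref{itm:nondegeneracy-contrast} we have $\eta:=\rho(0)-1\neq 0$, and by \ref{itm:strong-decay1} the anisotropic contributions --- the perturbation $\nabla\cdot(A-\id)\nabla$ of the operator, the source $\nabla\cdot(A-\id)\nabla u^{\rm inc}$, and the boundary term $g$ --- all carry extra positive powers of $|x|$ and drop out of the blowup at scale $r^{m+2}$; we are thus in the vanishing Bernoulli regime of \Cref{sec:C11-blowup}.

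The first step is to produce a nontrivial homogeneous blowup at the edge point. Since $x_{0}$ is an edge point, the blowup $D_{0}$ of $D$ at $0$ exists and is, after a rotation of coordinates, the closed wedge $D_{0}=S\times\mR^{n-2}$ with $S\subset\mR^{2}$ the closed sector of angle $\theta_{0}\in(0,2\pi)\setminus\{\pi\}$. Using the free boundary machinery of \cite{SS25vanishingcontrast} exactly as in \Cref{thm:1} --- the Weiss-type monotonicity formula (valid under \ref{itm:strong-decay1}, which is stronger than the double-Dini condition \eqref{Dini}), together with the optimal $C^{1,1}$ growth estimate and the nondegeneracy estimate that uses $h(0)\neq 0$ --- the rescalings $u^{\rm sc}(rx)/r^{m+2}$ converge along a subsequence to a nontrivial $u_{0}\in C^{1,1}_{\mathrm{loc}}(\mR^{n})$ that is homogeneous of degree $m+2$, vanishes on $\mR^{n}\setminus\overline{D_{0}}$ (a nonempty open set as $\theta_{0}<2\pi$, whence $\nabla u_{0}=0$ on $\partial D_{0}$ by continuity), and solves $\Delta u_{0}=-\kappa^{2}\eta H_{0}\chi_{\{u_{0}\neq 0\}}$. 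As $H_{0}$ is a nonzero polynomial, the contact set inside $D_{0}$ lies in the null set $\{H_{0}=0\}$, so $u_{0}$ is in fact a nontrivial homogeneous solution of the overdetermined problem $\Delta u_{0}=-\kappa^{2}\eta H_{0}$ in $\mathrm{int}(D_{0})$ with $u_{0}=|\nabla u_{0}|=0$ on $\partial D_{0}$.

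The second step is the dimension reduction. When $n=2$ the wedge is just $S$ and we are exactly in the planar situation of \Cref{thm:1}; for $n\geq 3$ I would strip off the $n-2$ edge directions one at a time. Fix a unit vector $e$ in the edge $E=\{0\}\times\mR^{n-2}\subset\partial D_{0}$. Standard free boundary and unique-continuation arguments show that $e\in\partial\{u_{0}\neq 0\}$ (otherwise $u_{0}\equiv 0$ near $e$ inside $D_{0}$ would force $H_{0}\equiv 0$ near $e$, hence $H_{0}\equiv 0$), that $u_{0}$ vanishes at $e$ to order exactly $m_{1}+2$, where $m_{1}$ is the vanishing order of $H_{0}$ at $e$ (a strictly larger order would make a blowup at $e$ harmonic and homogeneous while vanishing on the open complement of $D_{0}$, hence identically zero), and that a blowup $w_{1}$ of $u_{0}$ at $e$ is a nontrivial homogeneous $C^{1,1}$ solution of $\Delta w_{1}=-\kappa^{2}\eta H_{1}\chi_{\{w_{1}\neq 0\}}$ supported in $S\times\mR^{n-2}$, with $H_{1}\not\equiv 0$ the degree-$m_{1}$ homogeneous (harmonic) part of the Taylor expansion of $H_{0}$ at $e$. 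The crucial point --- a consequence of the homogeneity of $u_{0}$ about the origin --- is that $w_{1}$ is translation invariant along $e$; hence so is $H_{1}$ (since $\Delta w_{1}$ is, $\chi_{\{w_{1}\neq 0\}}$ is, and $\eta\neq 0$), and passing to the orthogonal quotient of $\mR^{n}$ by $\mR e$ produces on $\mR^{n-1}=\mR^{2}\times\mR^{n-3}$ an object of exactly the same type as $u_{0}$ --- nontrivial, homogeneous, $C^{1,1}$, supported in $S\times\mR^{n-3}$, with vanishing Cauchy data on the boundary, solving $\Delta(\cdot)=-\kappa^{2}\eta\widetilde H_{1}\chi$ for a nonzero harmonic homogeneous $\widetilde H_{1}$ --- but with the edge subspace now of dimension $n-3$. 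Iterating $n-2$ times yields a nontrivial homogeneous $v\in C^{1,1}_{\mathrm{loc}}(\mR^{2})$, of some degree $m^{*}+2$, vanishing outside the planar sector $S$ of angle $\theta_{0}$, with $v=|\nabla v|=0$ on $\partial S$, solving $\Delta v=-\kappa^{2}\eta H^{*}$ in $\mathrm{int}(S)$ for a nonzero harmonic homogeneous polynomial $H^{*}$ on $\mR^{2}$ of degree $m^{*}$.

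This $v$ is precisely the planar blowup configuration that the proof of \Cref{thm:1} rules out when $\theta_{0}\neq\pi$: writing $v=r^{m^{*}+2}\phi(\theta)$ and $H^{*}=r^{m^{*}}\psi(\theta)$ reduces matters to the ODE $\phi''+(m^{*}+2)^{2}\phi=-\kappa^{2}\eta\psi$ on $[0,\theta_{0}]$ subject to the four conditions $\phi(0)=\phi(\theta_{0})=\phi'(0)=\phi'(\theta_{0})=0$, an overdetermined system which forces $\theta_{0}=\pi$ or $v\equiv 0$ --- a contradiction. (The cases $m\in\{0,1\}$ set aside in \Cref{subsec_methodology} only lower the degrees of $H_{0}$ and $H^{*}$, e.g.\ $\Delta v=\mathrm{const}$ in $S$ when $m^{*}=0$, and are handled identically.) I expect the first step --- extracting a genuinely nontrivial homogeneous limit at the corner, equivalently proving the optimal growth and nondegeneracy estimates for \eqref{eq:scattered-field-distribution-form-recall} at the edge point --- to be the main obstacle and the place where the techniques of \cite{SS25vanishingcontrast} are indispensable; once homogeneity of the first blowup is in hand, the dimension reduction of the second step is an elementary iteration, and the final planar contradiction is the same one already needed for \Cref{thm:1}.
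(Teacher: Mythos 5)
Your overall architecture is the same as the paper's: reduce the nonscattering assumption to the free boundary problem \eqref{eq:scattered-field-distribution-form}, check under \ref{itm:nondegeneracy-contrast}--\ref{itm:strong-decay1} that $f=h(0)H+R$ with $\abs{R}\le C\abs{x}^{m+\alpha}$ and $\abs{g}\le C\abs{x}^{m+1+\alpha}$, get the decay of \Cref{lem:regularity1}, homogeneity of blowups via the Weiss-type monotonicity (\Cref{lem:blowup-procedure}), nondegeneracy (\Cref{lem:nondegeneracy}), and then a Federer-type dimension reduction along the edge down to the planar sector, which is exactly what \Cref{prop:edge-point} does by following the proof of \cite[Theorem~1.10]{SS25vanishingcontrast}. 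The Federer details you spell out (translation invariance of the blowup at an edge point $e$, the leading Taylor part $H_{1}$ of $H_{0}$ at $e$ being harmonic and independent of the $e$-direction, homogeneity of the second blowup via the same monotonicity applied to $u_{0}$ at $e$) are consistent with that route.

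There is, however, a genuine gap in how you arrive at the ``clean'' overdetermined problem. You assert that, since $H_{0}\not\equiv 0$, the contact set inside $D_{0}$ lies in $\{H_{0}=0\}$, and later that $u_{0}\equiv 0$ near $e$ would force $H_{0}\equiv 0$ near $e$. The equation $\Delta u_{0}=-\kappa^{2}\eta H_{0}\chi_{\{u_{0}\neq 0\}}$ gives no such information: on $\{u_{0}=0\}$ both sides vanish (a.e., since $u_{0}\in C^{1,1}$), irrespective of $H_{0}$ --- this is exactly the obstacle-problem structure, where contact sets of positive measure are the norm. What the paper actually uses is nondegeneracy (\Cref{lem:nondegeneracy}, \Cref{prop:properties-blowups1}\ref{itm:nondegeneracy-blowup-limit}), transferred to the blowup, to conclude $\supp(u_{0})=S\times\mR^{n-2}$; this yields that the interior zero set has empty interior, but \emph{not} that $\Delta u_{0}=-\kappa^{2}\eta H_{0}$ holds a.e.\ in $\mathrm{int}(D_{0})$, since the zero set could still have positive measure. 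Consequently your final planar step --- writing $v=r^{m^{*}+2}\phi(\theta)$ and imposing the four conditions $\phi(0)=\phi(\theta_{0})=\phi'(0)=\phi'(\theta_{0})=0$ for the ODE $\phi''+(m^{*}+2)^{2}\phi=-\kappa^{2}\eta\psi$ on all of $[0,\theta_{0}]$ --- is not justified as stated: a priori the angular equation carries the factor $\chi_{\{\phi\neq 0\}}$. (Your determinant count is correct \emph{when} the clean ODE holds: the determinant is a positive multiple of $(m^{*}+1)^{2}\sin^{2}\theta_{0}-\sin^{2}((m^{*}+1)\theta_{0})$, which vanishes on $(0,2\pi)$ only at $\theta_{0}=\pi$; the missing piece is the reduction to it.) This is precisely where the paper instead invokes the classification of planar blowups, \Cref{lem:SS25-lem3.12} (\cite[Theorem~3.12]{SS25vanishingcontrast}), whose proof deals with the characteristic function; citing that lemma, or repairing your argument by analyzing the components of $\{\phi\neq 0\}$ and matching the $C^{1,1}$ data across them, closes the gap and brings your proof back to the paper's.
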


As in \Cref{thm:1}, it seems plausible 
to  relax the decay conditions to those in \eqref{Dini}.

The assumptions in our main theorems ensure the following degeneracy 
\begin{equation*} 
\nu\cdot A(x)\nabla u^{\rm sc}(x) = \nu\cdot(\id-A(x))\nabla u^{\rm inc}(x) \rightarrow 0 \quad \text{as $x\rightarrow x_{0}$}, 
\end{equation*} 
which was not addressed in our earlier work \cite{KSS23Anisotropic}. Here, we also allow $\nu\cdot(\id-A)\nabla u^{\rm inc}$ to change sign. In \cite{KSS23Anisotropic}, we focused instead on the scattering behavior under non-degeneracy conditions 
\begin{equation}
\left\{\begin{aligned}
& \nu\cdot(\id-A)\nabla u^{\rm inc}(x) \ge c > 0 \text{ for $\mH^{n-1}$-a.e. $x\in\partial D$ near $x_{0}$; or} \\ 
& \nu\cdot(\id-A)\nabla u^{\rm inc}(x) \le -c < 0 \text{ for $\mH^{n-1}$-a.e. $x\in\partial D$ near $x_{0}$.} 
\end{aligned}\right. \label{eq:nondegeneracy-condition}
\end{equation}
Although it is often possible to construct global solutions of the Helmholtz equation that are positive in a given set (see \cite{KSS23PositiveHelmholtz}), the real-valued functions $u^{\rm inc}$ and $\partial_j u^{\rm inc}$ typically exhibit numerous zeros, therefore the assumption \eqref{eq:nondegeneracy-condition} is not guaranteed to hold in many cases of interest. 

In view of condition \ref{itm:nondegeneracy-contrast}, it is natural to compare our results (\Cref{thm:1,thm:2}) with those of \cite{BPS14CornerScattering}, whose proof relies on suitable complex geometric optics (CGO) solutions; see also \cite{Bla18CornerScattering,BL17CornerScattering,BL21CornerScatteringSingleFarField,BL21Scattering,HSV16SingleMeasurement,PSV17CornerScattering,VX21nonscattering} for further refinements of similar results based on CGO or other harmonic-exponential solutions. This approach was extended in \cite{CX21CornerScatteringEllipticOperator} to scattering problems for general divergence-form operators, modeling \emph{isotropic} media rather than the Laplacian. We also refer to recent works \cite{HV25scattering1,VX25FinitenessResult} for further refinements in the case $A=\id$, based on the asymptotic behavior of suitable integral expressions. 
Our results (\Cref{thm:1,thm:2}) further extend these findings to general divergence-form operators modeling anisotropic media. 

Other approaches to this type of problem include \cite{EH15CornersEdgesScatter,EH18scattering}, which use direct expansions of solutions to the Helmholtz equation, and \cite{SS21NonscatteringFreeBoundary,KSS23Anisotropic}, which build on results related to free boundaries. It is also interesting to mention that the analysis in \cite{CakoniVogelius} is based on the observation that a nonscattering domain satisfies the assumptions of the \href{https://www.scilag.net/problem/G-180522.1}{Pompeiu conjecture} \cite{Pom29PompeiuProblem}, and hence its boundary must be analytic \cite{Wil76PompeiuProblem,Wil81PompeiuProblem}. Although this may be beyond our scope, we note that the \href{https://www.scilag.net/problem/G-180522.1}{Pompeiu conjecture} admits an equivalent formulation in terms of $k$-quadrature domains \cite{KLSS22QuadratureDomain}.


\subsubsection{Nonvanishing Bernoulli condition\label{sec:blowup-Bernoulli}}

The assumption \ref{itm:strong-decay1} in \Cref{thm:1,thm:2} enables us to establish our results by following the approach of \cite{SS25vanishingcontrast}. A natural question, however, is what happens when the condition \ref{itm:strong-decay1} in \Cref{thm:1} fails. We address this in the following theorem.

\begin{theorem}\label{thm:3}
Let $D\subset\mR^{2}$ be a bounded, simply connected open set with piecewise $C^{1}$ boundary. Let $\kappa>0$, let $\rho\in L^{\infty}(D)$ be a positive real-valued function, let $A\in (C^{1}
(\overline{D}))^{n\times n}$ be a real symmetric matrix-valued function, satisfying the condition of uniform ellipticity \eqref{eq:ellipticity}. Suppose that there exists $x_{0}\in\partial D$ such that 
\begin{enumerate}
\renewcommand{\labelenumi}{\theenumi}
\renewcommand{\theenumi}{(\roman{enumi})}
\item \label{itm:degeneracy-contrast} the contrast $h$ given in \eqref{eq:contrast-h} satisfies $\abs{h(x)}\le C\abs{x-x_{0}}^{\alpha}$, and  
\item \label{itm:non-decay1} $A(x)-\id = c_{0}B^{-1} + \tilde{R}(x-x_{0})$ for some real orthogonal matrix $B$ and for some nonzero constant $c_{0}$ with $\abs{\tilde{R}} \le C\abs{x}^{\alpha}$ and $\abs{\nabla \tilde{R}} \le C\abs{x}^{\alpha-1}$. 
\end{enumerate}
If $\partial D$ has a corner at $x_{0}$ with angle $\theta_{0}\notin \pi\mQ$, then the anisotropic medium $(D,A,\rho)$ scatters every incident wave $u^{\rm inc}\not\equiv 0$ nontrivially.  
\end{theorem}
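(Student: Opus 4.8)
The plan is to argue by contradiction: suppose $(D,A,\rho)$ is nonscattering with respect to some $u^{\rm inc} \not\equiv 0$, so that $u^{\rm sc}$ satisfies the free boundary problem \eqref{eq:scattered-field-distribution-form-recall} with $f$ and $g$ as in \eqref{eq:coefficient-f}. Fix the corner point $x_0 = 0$ and set up the expansions of $u^{\rm inc}$, $A - \id$, and $\rho - 1$ at $0$ described in \Cref{subsec_methodology}. Under assumption \ref{itm:non-decay1} the matrix $B := A - \id$ has the constant leading term $c_0 B^{-1}$, i.e.\ in the notation of the methodology section $B$ is a nonzero constant matrix (degree $b = 0$), while by assumption \ref{itm:degeneracy-contrast} the contrast $h$ vanishes to order $\alpha$, so the $\eta H$ term is irrelevant; this places us in the regime $0 \le b < p+2$. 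Writing $H$ for the leading harmonic polynomial of $u^{\rm inc}$ at $0$ of degree $m$ — one must first check $m \ge 2$, or handle the excluded low-degree cases separately — the blowup sequence $u_r(x) = u^{\rm sc}(rx)/r^{m+b} = u^{\rm sc}(rx)/r^m$ should converge (after extracting a subsequence, using the regularity/compactness machinery of \cite{SS25vanishingcontrast}) to a nonzero solution $u_0$ of
\[
\Delta u_0 = -(\nabla \cdot B \nabla H)\,\mL^n\lfloor D_0 + (B\nabla H \cdot \nu_0)\,\mH^{n-1}\lfloor \partial D_0, \qquad u_0|_{\mR^2 \setminus \ol{D}_0} = 0,
\]
where $D_0$ is the blowup of $D$ at $0$, which — since $\partial D$ has a corner of angle $\theta_0$ — is an infinite planar sector $\Sigma_{\theta_0}$ of opening $\theta_0$.

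The heart of the matter is then the rigidity statement: \emph{this blowup problem on a sector $\Sigma_{\theta_0}$ with $\theta_0 \notin \pi\mQ$ has no nontrivial solution.} Since $B = c_0 B^{-1}$ is a constant invertible matrix and $B^{-1}$ is orthogonal, one can first change variables by the linear map associated to $B^{-1/2}$ (or directly diagonalize), turning the operator $\nabla \cdot B \nabla$ into a multiple of the Laplacian and the sector $\Sigma_{\theta_0}$ into another sector $\Sigma_{\theta_0'}$; crucially, a linear map sends $\pi\mZ$-angles to $\pi\mZ$-angles (it preserves the property of two lines being "rational multiples of $\pi$ apart" only up to the specific transformation, so one must track carefully how $\theta_0$ transforms — this is where the orthogonality of $B$ is used, to ensure $\theta_0' $ is again irrational, or more precisely that the relevant non-resonance condition is preserved). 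After this reduction, $u_0$ (or rather $u_0$ minus an explicit polynomial particular solution $w$ solving $\Delta w = -\nabla\cdot B\nabla H$ in the sector) is harmonic in $\Sigma_{\theta_0'}$, vanishes on one edge together with its conormal derivative, and on the other edge matches the Dirichlet/conormal data coming from $w$ — and by homogeneity everything is a homogeneous polynomial (or harmonic homogeneous function) of a fixed degree. Decomposing into the modes $r^k \cos(k\theta), r^k\sin(k\theta)$ and using Cauchy data $u_0 = \p_\nu u_0 = 0$ on $\{\theta = 0\}$ forces $u_0 - w$ to be, up to constants, $\im(z^\ell)$-type; then imposing the corresponding Cauchy condition on $\{\theta = \theta_0'\}$ yields an equation like $\sin(\ell \theta_0') = 0$, contradicting $\theta_0'/\pi \notin \mQ$. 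One must also verify that the particular solution $w$ itself, being a genuine polynomial determined by $B$ and $H$, cannot already satisfy the homogeneous sector problem — this uses $B \ne 0$.

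I expect the main obstacle to be twofold. First, the analytic part: establishing that the blowup limit $u_0$ exists, is nonzero, and has the claimed degree of homogeneity — this requires an Almgren-type monotonicity formula and non-degeneracy estimates for $u^{\rm sc}$ near the corner, precisely the kind of machinery developed in \cite{SS25vanishingcontrast}, and one must check that the hypotheses of those results are met given only the weak decay \ref{itm:degeneracy-contrast} on $h$ and the form \ref{itm:non-decay1} of $A - \id$ (in particular that the conormal Bernoulli datum $g = \nu\cdot B\nabla H + \ldots$ does not degenerate at $0$, so the free boundary condition genuinely survives in the blowup). Second, the algebraic/number-theoretic bookkeeping: carefully tracking how the linear change of variables induced by $B^{-1}$ acts on the corner angle and on the harmonic expansion, and confirming that the irrationality hypothesis $\theta_0 \notin \pi\mQ$ survives the reduction in the precise form needed to kill every possible polynomial mode — this is where one must be most careful not to lose a special case (e.g.\ when $H$ has a particular symmetry aligned with the eigendirections of $B$). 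The contradiction with the nonscattering assumption then completes the proof.
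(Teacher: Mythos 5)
Your overall architecture does match the paper's: contradiction from nonscattering, blowup $u^{\rm sc}(rx)/r^{m}$ at the corner in the regime $b=0$, a Bernoulli-type sector problem in the limit, and a final mode computation in $r^{k}e^{\pm\bfi k\theta}$ that produces a $\sin(m\theta_{0})=0$ obstruction incompatible with $\theta_{0}\notin\pi\mQ$. However, there are two genuine gaps in how you propose to execute it. First, the treatment of the constant matrix $c_{0}B^{-1}$: your plan to change variables by ``$B^{-1/2}$'' (or to diagonalize) so that $\nabla\cdot B\nabla$ becomes a multiple of the Laplacian does not work as stated. The matrix $c_{0}B^{-1}$ need not be positive definite (e.g.\ $c_{0}<0$, or $B^{-1}$ a symmetric orthogonal matrix with eigenvalues $\pm1$), the operator actually appearing in the blowup is $\nabla\cdot(\id+c_{0}B^{-1})\nabla$, and the required square-root map is in general \emph{not} angle-preserving: it changes the opening of the sector and destroys the harmonicity of $H$, so the hypothesis $\theta_{0}\notin\pi\mQ$ can no longer be invoked --- exactly the difficulty you flag but do not resolve, and the one highlighted in \Cref{rem:explaination-why-orthogonal}. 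The paper avoids any such anisotropic change of variables: orthogonality of $B$ is used at the level of the data, namely $u^{\rm inc}\circ B$ still solves the Helmholtz equation and $(A(y)-\id)\nabla_{y}u^{\rm inc}|_{y=Bx}=c_{0}\nabla_{x}H(x)+O(\abs{x}^{m-1+\alpha})$, which reduces the proof to $B=\id$ while keeping the Laplacian, the harmonic $H$, and the same corner angle; in that reduced setting the volume term $\nabla\cdot B\nabla H=c_{0}\Delta H$ vanishes identically, so no particular solution $w$ has to be subtracted.

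Second, you make the proof hinge on existence, nontriviality and exact homogeneity of the blowup limit via an Almgren/Weiss-type monotonicity formula and non-degeneracy estimates. The paper deliberately does not supply (and does not need) this machinery in two dimensions: compactness comes only from the Lipschitz bound of \Cref{lem:regularity1} applied with exponent $m$, the paper explicitly notes that homogeneity of the blowup limit is not known and not needed, and the contradiction is not extracted from $u_{0}\neq0$. Instead, any blowup limit $v$ satisfies the distributional equation \eqref{eq:blowup-v-2}; unique continuation together with the Cauchy--Kowalevski-type identification of \Cref{lem:half-space-solution} determines $v$ in the sector from its Cauchy data $(v,\partial_{\theta}v)=(0,c_{0}\partial_{\theta}H)$ on one edge (note your bookkeeping ``$u_{0}=\partial_{\nu}u_{0}=0$ on $\{\theta=0\}$'' is off: the normal derivative equals the Bernoulli datum, not zero), and imposing the same conditions on the other edge yields, in \Cref{lem:blowup-rational-angle}, a $2\times2$ system with determinant $4\sin^{2}(m\theta_{0})\neq0$, forcing $H\equiv0$ and contradicting $u^{\rm inc}\not\equiv0$. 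So the items you single out as the main analytic obstacles (monotonicity formula, non-degeneracy of the Bernoulli datum, $m\ge2$) are precisely what the paper circumvents; as written, your route leaves them unproved, and in the Bernoulli setting the monotonicity formula you would need is itself a nontrivial missing ingredient.
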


\begin{remark}\label{rem:thm:3}
One can expand $u^{\rm inc}(Bx) - u^{\rm inc}(0) = H(x) + R(x)$, where $H\not\equiv 0$ is a harmonic homogeneous polynomial of order $m\ge 1$ and $\abs{R(x)}\le C\abs{x}^{m+1}$. The results of \Cref{thm:3} remain valid for any angle $\theta_{0}$ such that $\sin(m\theta_{0})\neq 0$. We also explain the difficulties of replacing $B^{-1}$ by a more general matrix in \Cref{rem:explaination-why-orthogonal}.
\end{remark}

Adapting Federer's dimension reduction argument, as in the proof of \cite[Theorem~1.10]{SS25vanishingcontrast} (compare \cite[Lemma~10.9]{Velichkov23FB} and \cite{Weiss99FB}), we can conclude an analogous result for domains in $\mR^{3}$ with $n\ge 3$: 

\begin{theorem}\label{thm:4} 
Let $D$ be a bounded Lipschitz domain in $\mR^{n}$ with $n\ge 3$ such that $\mR^{n}\setminus\overline{D}$ is connected. Let $\kappa>0$, let $\rho\in L^{\infty}(D)$ be a positive real-valued function, let $A\in (C^{1}
(\overline{D}))^{n\times n}$ be a real symmetric matrix-valued function, satisfying the condition of uniform ellipticity \eqref{eq:ellipticity}. If $x_{0}\in\partial D$ is an edge point with angle $\theta_{0}\notin \pi\mQ$, and conditions \ref{itm:degeneracy-contrast} and \ref{itm:non-decay1} in \Cref{thm:3} hold, then the anisotropic medium $(D,A,\rho)$ scatters every incident wave $u^{\rm inc}\not\equiv 0$ nontrivially. 
\end{theorem}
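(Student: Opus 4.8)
The plan is to reduce \Cref{thm:4} to the two-dimensional \Cref{thm:3} by a Federer-type dimension reduction, following the passage from the planar to the higher-dimensional statement in \cite[Theorem~1.10]{SS25vanishingcontrast} (compare \cite[Lemma~10.9]{Velichkov23FB} and \cite{Weiss99FB}). Arguing by contradiction, suppose the medium $(D,A,\rho)$ is nonscattering with respect to some $u^{\rm inc}\not\equiv 0$, so that $u^{\rm sc}$ satisfies the free boundary system \eqref{eq:scattered-field-distribution-form-recall}. First I would apply the $C^{1}$ diffeomorphism $\Phi$ flattening the edge at $x_{0}$; after this we may take $x_{0}=0$ and assume that near $0$ the domain coincides with $D_{0}=S\times\mR^{n-2}$, where $S$ is the planar sector of opening $\theta_{0}$. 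Because $(\nabla\otimes\Phi)(0)$ is a rotation, the pushed-forward coefficients retain the structure of \ref{itm:degeneracy-contrast} and \ref{itm:non-decay1}: the leading term of $A-\id$ is still $c_{0}B^{-1}$ with $B$ orthogonal and $c_{0}\ne 0$, and the contrast still decays like $|x|^{\alpha}$. Expanding $u^{\rm inc}$ as in \Cref{rem:thm:3} yields a nonzero harmonic homogeneous polynomial $H$ of some degree $m\ge 1$.

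Next, since we are in the regime $0\le b<p+2$ with $b=0$, I would form the blowup sequence $u_{r}(x)=u^{\rm sc}(rx)/r^{m}$ at $0$. Using the Weiss-type monotonicity formula of \cite{SS25vanishingcontrast} --- applicable here because the H\"older remainders $\tilde R$ and the contrast $h$ provide the required double-Dini control --- one extracts, along a subsequence $r_{j}\to 0$, a blowup limit $u_{0}$ that is nonzero, homogeneous of degree $m$, vanishes on $\mR^{n}\setminus\ol{D}_{0}$, and solves
\[
\Delta u_{0} = -(\nabla\cdot B\nabla H)\,\mL^{n}\lfloor D_{0} + (B\nabla H\cdot\nu_{0})\,\mH^{n-1}\lfloor\partial D_{0},\qquad u_{0}|_{\mR^{n}\setminus\ol{D}_{0}}=0 .
\]

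The dimension reduction then proceeds as in \cite[Lemma~10.9]{Velichkov23FB}: the ``edge'' of $D_{0}$ is the $(n-2)$-plane $\{0\}\times\mR^{n-2}$; blowing $u_{0}$ up at a generic point $(0,z')$ of this plane produces a further homogeneous solution of the same type that is moreover invariant under translations in the $\mR^{n-2}$ directions. Iterating this (Federer), one arrives at a nonzero solution depending only on the two variables transverse to the edge, i.e.\ a solution $v\not\equiv 0$ on the sector $S$ of the planar blowup equation attached to the data $(B,H)$. At this point the analysis behind \Cref{thm:3} applies verbatim: for an opening $\theta_{0}$ that two-dimensional equation admits no nonzero solution once $\sin(m\theta_{0})\neq 0$, and $\theta_{0}\notin\pi\mQ$ forces $m\theta_{0}\notin\pi\mZ$. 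This contradicts $v\not\equiv 0$ and completes the proof.

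The main obstacle is the bookkeeping in the iterated blowup: one must verify that each blowup stays inside the class of Bernoulli-type free boundary problems covered by \Cref{thm:3} --- that the contribution of the contrast $h$ keeps vanishing in the limit, that the leading coefficient remains a nonzero multiple of an orthogonal matrix (which is exactly where hypothesis \ref{itm:non-decay1} with an \emph{orthogonal} $B$ is needed, cf.\ \Cref{rem:explaination-why-orthogonal}), and that the homogeneity degree $m$ together with the nondegeneracy (nonvanishing) of the blowup are preserved at every step. Once these invariances are in place, the reduction to the genuinely planar statement, and hence to \Cref{thm:3}, is routine.
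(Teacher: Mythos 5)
Your overall strategy (Federer-type dimension reduction of the edge problem down to the planar sector problem, then the two-dimensional analysis of \Cref{thm:3}) is the same as the paper's, but the key technical step is asserted rather than proved, and the assertion as stated does not hold. You obtain homogeneity of the blowup at $x_0$ by invoking ``the Weiss-type monotonicity formula of \cite{SS25vanishingcontrast}''. That functional (and its analogue in \Cref{sec:blowup} of this paper) is built for the \emph{vanishing} Bernoulli regime: its almost-monotonicity relies on the surface datum satisfying $\abs{g(x)}\le C\abs{x}^{m+1+\alpha}$ relative to blowups of order $m+2$, so that the term $\int_{B_1\cap(\tau^{-1}\partial D)}\tau^{-m-1}g(\tau x)\partial_\tau u_\tau\,\rmd S$ is an error of size $O(r^\alpha)$. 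In the setting of \Cref{thm:3,thm:4} one has $b=0$, the blowup order is $m$, and $g=c_0\,\nu\cdot\nabla H+\vec R\sim\abs{x}^{m-1}$; the surface term is then of leading order (it \emph{is} the Bernoulli condition in \eqref{eq:blowup-v-2}), and the derivation of monotonicity breaks down. The obstruction is not the H\"older remainders or the contrast, as you suggest, but the surviving Bernoulli term itself; a genuine Weiss/Alt--Caffarelli-type monotonicity formula for this case (cf.\ \cite{CSY18FB}) on a merely piecewise-$C^1$ domain is a substantial extra ingredient that you would have to develop, and the paper explicitly declines to do so. Instead, the paper performs a \emph{second} blowup at a point $e$ on the edge away from the origin (scaling by the vanishing order $k\le m$ of $H$ at $e$, via \Cref{lem:regularity1}) and proves homogeneity of that blowup directly by a Cauchy--Kovalevskaya/unique continuation argument (\Cref{lem:k-homogeneous}), which exploits that the blowup vanishes in one component and that the Bernoulli data come from a homogeneous polynomial; Federer's reduction is then run on this second blowup.

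Two further points in your bookkeeping. First, the claim that the first blowup $u_0$ is nonzero is unsupported: the nondegeneracy lemma of \Cref{sec:weak-flatness} needs $h(x_0)\neq 0$, which is excluded here by \ref{itm:degeneracy-contrast}; fortunately nondegeneracy is not needed, since the contradiction in \Cref{thm:3,thm:4} comes from \Cref{lem:blowup-rational-angle} forcing the data polynomial itself to vanish, not from the blowup being nontrivial. Second, the homogeneity degree is \emph{not} preserved along the reduction: blowing up at an edge point $e\neq 0$ replaces $H$ by the leading part $P$ of $H(e+\cdot)$, of some degree $k$ with $1\le k\le m$, so at the bottom one needs $\sin(k\theta_0)\neq 0$ for the reduced degree $k$ rather than for $m$; this is harmless precisely because $\theta_0\notin\pi\mQ$ (and is why the remark after \Cref{thm:4} requires $\sin(k\theta_0)\neq 0$ for all $k=1,\dots,m$), but your statement that ``$m$ is preserved at every step'' should be corrected accordingly.
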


\begin{remark}
Let $m$ be the integer appearing in  \Cref{rem:thm:3}. The results of \Cref{thm:4} remain valid for any angle $\theta_{0}$ such that $\sin(k\theta_{0})\neq 0$ for all $k=1,\cdots,m$. 
\end{remark}

In our previous work \cite[Example~1.5]{KSS23Anisotropic}, we showed that nonscattering domains may have corners with opening angles $\ell\pi/m$ for integers $m\ge 2$ and $1\le \ell < 2m-1$, highlighting the necessity of the condition $\theta_{0}\notin \pi\mQ$. See also \cite{CHLX24examples} for additional examples of nonscattering inhomogeneities. 
It is also worth mentioning that \cite{CDLZ21VanishingOrder,CDLZ21VanishingOrder2} studied an inverse problem in $\mR^{3}$: 
\begin{equation*}
\left\{\begin{aligned}
& (\Delta + \kappa^{2})u^{\rm to}=0 && \text{in $\mR^{3}\setminus\overline{D}$,} \\ 
& u^{\rm to} = u^{\rm inc} + u^{\rm sc} && \text{in $\mR^{3}\setminus\overline{D}$,} \\ 
& \partial_{\nu}u^{\rm to} + \eta u^{\rm to} = 0 && \text{on $\partial D$,} \\ 
& \text{$u^{\rm sc}$ is outgoing,}
\end{aligned}\right.
\end{equation*} 
obtaining results for polyhedral domains $D$ with vertices having angles $\theta_{0}\notin \pi\mQ$. Although their analysis was restricted to three dimensions, the results can be readily extended to arbitrary dimensions $d\ge 2$ using $d$-dimensional spherical harmonics \cite{EF14SphericalHarmonics}. They proceed by examining the vanishing order of general solutions to the Helmholtz equation, which they refer to as ``generalized Laplacian eigenfunctions''. This approach relies on explicit expansions of Helmholtz solutions and is therefore highly specific to the Laplace operator. It is worth noting that the analysis in \cite{HV25scattering2} also relies on the observation that a nonscattering domain satisfies the assumptions of the \href{https://www.scilag.net/problem/G-180522.1}{Pompeiu conjecture} \cite{Pom29PompeiuProblem}, incorporating the moving plane technique from \cite{BK82Pompeiu}, and is therefore highly specific to the Laplace operator. 
In contrast, our findings show that blowup analysis can be naturally extended to general elliptic operators beyond the Laplacian.

\section{Proof of \texorpdfstring{\Cref{thm:1,thm:2}}{Theorems \ref{thm:1} and \ref{thm:2}}} \label{sec_2}

Suppose that \eqref{eq:scattered-field} holds. For each $\psi\in C_{c}^{\infty}(\mR^{n})$, one computes that (we slightly abuse some notations here) 
\begin{equation*}
\begin{aligned} 
& \int_{\mR^{n}} \psi \left(-(\nabla\cdot (A(x)-\id)\nabla+\kappa^{2}(\rho(x)-1))u^{\rm inc}\right)\mL^{n}\lfloor D \,\rmd x \\ 
& \quad = \int_{D} \psi \left(-(\nabla\cdot (A(x)-\id)\nabla+\kappa^{2}(\rho(x)-1))u^{\rm inc}\right) \, \rmd x \\ 
& \quad = \int_{D} \psi (\nabla\cdot A(x)\nabla+\kappa^{2}\rho(x))u^{\rm sc} \,\rmd x \\ 
& \quad = \int_{D} \left(-A(x)\nabla\psi \cdot \nabla u^{\rm sc} +\kappa^{2}\rho(x)u^{\rm sc}\right) \,\rmd x + \int_{\partial D} \psi \nu\cdot A(x)\nabla u^{\rm sc} \, \rmd S \\ 
& \quad = \int_{\mR^{n}} \left(-A(x)\nabla\psi \cdot \nabla u^{\rm sc} + \kappa^{2}\rho(x)u^{\rm sc}\right) \,\rmd x + \int_{\partial D} \psi \left(\nu\cdot(\id-A)\nabla u^{\rm inc}\right) \, \rmd S \\ 
& \quad = \int_{\mR^{n}} \psi (\nabla\cdot A(x)\nabla+\kappa^{2}\rho(x))u^{\rm sc} \,\rmd x + \int_{\mR^{n}} \psi \left(\nu\cdot(\id-A)\nabla u^{\rm inc}\right)\mH^{n-1}\lfloor\partial D \, \rmd x.  
\end{aligned} 
\end{equation*}
This shows that \eqref{eq:scattered-field} is equivalent to 
\begin{subequations}\label{eq:scattered-field-distribution-form} 
\begin{equation}
(\nabla\cdot A(x)\nabla+\kappa^{2}\rho(x))u^{\rm sc} = f \mL^{n}\lfloor D + g \mH^{n-1}\lfloor\partial D \text{ in $\mR^n$},\quad u^{\rm sc}|_{\mR^{n}\setminus\overline{D}}=0 \label{eq:scattered-field-distribution-form-a} 
\end{equation}
in distribution sense, where 
\begin{equation}
f = -(\nabla\cdot (A(x)-\id)\nabla+h)u^{\rm inc} ,\quad g = \nu\cdot(A-\id)\nabla u^{\rm inc} \label{eq:scattered-field-distribution-form-b} 
\end{equation}
and $h(x) := \kappa^{2}(\rho(x)-1)\chi_{D}$. 
\end{subequations}  

The above observations allow us to establish \Cref{thm:1,thm:2} by following the approach of \cite{SS25vanishingcontrast}. 
In \Cref{sec:C01-regularity} we prove the $C^{0,1}$ regularity and decay rates for solutions of a certain class of PDE. In \Cref{sec:blowup}, we study blowup limits and prove their homogeneity by using a modified balanced energy functional, and also recall some results related to blowup solutions. In \Cref{sec:weak-flatness}, we show non-degeneracy and weak flatness properties and regularity of the free boundary. Finally, we prove \Cref{thm:1,thm:2} in \Cref{sec:proof}.

\subsection{Lipschitz regularity and optimal decay rate\label{sec:C01-regularity}} 


In this section we study the regularity and vanishing order for solutions of the equation 
\begin{equation}
(\nabla\cdot A(x)\nabla+q)u=f\mL^{n}\lfloor B_{2}+g\mH^{n-1}\lfloor\partial D \text{ in $B_{2}$} ,\quad u|_{B_{2}\setminus\overline{D}}=0, \label{eq:PDE1} 
\end{equation}
as follows: 

\begin{lemma}\label{lem:regularity1}
Let $D$ be a bounded Lipschitz domain in $\mR^{n}$ with $0\in\partial D$, let $A\in(C^{0,1}(\overline{D}))^{n\times n}$ be a real symmetric matrix-valued function, satisfying the condition of uniform ellipticity \eqref{eq:ellipticity}, let $q\in L^{\infty}(B_{2})$, let $m\ge 0$ be an integer and suppose that $u\in H^{1}(B_{2})$ solves \eqref{eq:PDE1} with $\abs{f(x)}\le C\abs{x}^{m}$ a.e. in $B_{2}$ and $\abs{g(x)}\le C\abs{x}^{m+1}$ for $\mH^{n-1}$-a.e. $x\in\partial D\cap B_{2}$. Then 
\begin{equation*}
\abs{u(x)} + \abs{x} \abs{\nabla u(x)} \le C\abs{x}^{m+2} \quad \text{in $B_{1}$.} 
\end{equation*}
\end{lemma}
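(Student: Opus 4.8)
### Proof plan for Lemma 2.2

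The plan is to prove the decay estimate by a standard Campanato/Morrey-type iteration, using an $L^2$-based excess decay and elliptic regularity to upgrade to the pointwise and gradient bounds. First I would set up the quantities to be iterated. For $r\in(0,1]$ write
\[
\Phi(r) := \frac{1}{r^{n}}\int_{B_r} |u|^2\,\rmd x ,
\]
and try to show that either $u$ already decays like $r^{m+2}$ on some scale (so $\Phi(r)\lesssim r^{2(m+2)}$), or one can run a dichotomy argument. The key observation is that near $0$ the equation \eqref{eq:PDE1} has a right-hand side that is small: the bulk term $f\chi_D$ has size $O(r^m)$ in sup-norm, hence its rescalings are controlled, and the surface term $g\,\mH^{n-1}\lfloor\partial D$ has density $O(r^{m+1})$, which after the natural parabolic-type scaling contributes at the same order. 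Since $u$ vanishes identically on $B_2\setminus\overline D$, a blow-up/compactness argument forces any normalized limit to be a global $H^1_{\rm loc}$ solution of a homogeneous constant-coefficient equation $\nabla\cdot A(0)\nabla v = 0$ that vanishes on the complement of a cone; the only way to reconcile this with the homogeneity bookkeeping is to conclude that $v$ must be a polynomial of degree at most $m+1$ on the cone side — but a harmonic (after a linear change of variables diagonalizing $A(0)$) function vanishing on an open set is $\equiv 0$, which gives the needed gain.

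The cleaner route, which I would actually carry out, is the direct one via the representation used in \cite{SS25vanishingcontrast}: one freezes the coefficient at $0$, writes $u = w + (u-w)$ where $w$ solves the constant-coefficient problem $\nabla\cdot A(0)\nabla w = f\chi_D + g\,\mH^{n-1}\lfloor\partial D$ in $B_r$ with $w=0$ on $B_r\setminus\overline D$ and $w=u$ on $\partial B_r$, estimate $w$ using the Newtonian/single-layer potential representation of the right-hand side together with the decay of $f$ and $g$, and control the error $u-w$ via $\|A(x)-A(0)\|_{L^\infty(B_r)}\lesssim r$ (Lipschitz continuity of $A$) — an $L^2$ Caccioppoli/energy estimate. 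This produces an inequality of the Morrey type
\[
\Phi(\tau r) \le C\bigl(\tau^{2} + \tau^{-n}r^{2\beta}\bigr)\,\Phi(r) + C\tau^{-n} r^{2(m+2)},
\]
for a small fixed $\tau$ and some $\beta>0$ coming from the Lipschitz modulus; iterating this with $\tau$ chosen so that $C\tau^2 \le \tfrac12$, say, and $r$ small gives $\Phi(r)\lesssim r^{2(m+2)}$ for all small $r$, hence for all $r\in(0,1]$ after adjusting constants using the $H^1(B_2)$ bound on $u$ as the base of the iteration.

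To conclude, I would pass from the $L^2$ statement to the pointwise one. Interior elliptic estimates (Schauder or $W^{2,p}$, using $q\in L^\infty$ and $A\in C^{0,1}$) on the region where $D$ fills a definite fraction of $B_r$ — combined with the fact that $u$ is genuinely zero on the complement of $D$, so that no oscillation can hide there — upgrade $\Phi(r)\lesssim r^{2(m+2)}$ to $\sup_{B_{r/2}}|u|\lesssim r^{m+2}$ and $\sup_{B_{r/2}}|\nabla u|\lesssim r^{m+1}$; rescaling $u_r(y)=u(ry)/r^{m+2}$ and applying local estimates for the rescaled equation (whose coefficients and data are uniformly controlled) is the standard way to make this clean. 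Taking $r=|x|$ gives $|u(x)|+|x|\,|\nabla u(x)|\le C|x|^{m+2}$ in $B_1$.

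The main obstacle, I expect, is the surface-measure term $g\,\mH^{n-1}\lfloor\partial D$: because $D$ is merely Lipschitz, one does not have good regularity of $\partial D$, so the single-layer potential of $g$ must be handled using only the ($n-1$)-Ahlfors regularity of $\partial D$ and the density bound $|g|\lesssim r^{m+1}$, and the trace/jump relations across $\partial D$ have to be managed purely in the $H^1$/energy framework rather than pointwise. Getting the scaling exponents to match — that the bulk term (order $m$ in sup, i.e. after scaling contributing $r^{m+2}$ to $u$) and the surface term (order $m+1$ in density, contributing the same $r^{m+2}$) land at exactly the same power so the iteration closes at $m+2$ — is the delicate bookkeeping that makes the argument work.
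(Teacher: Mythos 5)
Your proposal has a genuine gap, and it sits exactly at the point where the lemma's strength comes from. The decay rate $m+2$ is \emph{not} obtainable from a coefficient-freezing Campanato iteration with a gain factor $\tau^{2}$: iterating $\Phi(\tau r)\le C(\tau^{2}+\tau^{-n}r^{2\beta})\Phi(r)+C\tau^{-n}r^{2(m+2)}$ saturates at $\Phi(r)\lesssim r^{2}$, i.e.\ first-order decay of $u$, far short of $r^{2(m+2)}$; to close at order $m+2$ you would need the homogeneous comparison to decay like $\tau^{2(m+2)}$, and a constant-coefficient solution with boundary data $u$ on $\partial B_r$ has no reason to vanish to that order at $0$ unless you invoke the one structural fact that makes the lemma true: $u\equiv 0$ on the complement of $\overline D$, which contains an open cone at $0$, and a solution of $\nabla\cdot A(0)\nabla v=0$ vanishing on an open cone is identically zero by unique continuation. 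Your first paragraph gestures at exactly this blow-up mechanism, but the route you say you would actually carry out never uses it inside the iteration (it appears only in the $L^{2}\to L^{\infty}$ upgrade remark). Relatedly, the comparison function $w$ you define is over-determined: you impose the frozen-coefficient equation in all of $B_r$, the measure right-hand side, $w=u$ on $\partial B_r$, \emph{and} $w=0$ on $B_r\setminus\overline D$; such a $w$ does not exist in general, and if you drop the constraint $w|_{B_r\setminus\overline D}=0$ you lose precisely the mechanism that produces high-order decay.

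For contrast, the paper's proof is a contradiction blow-up with an extra diverging normalization: it first imports the local Lipschitz bound $u\in C^{0,1}$ from \cite[Lemma~2.2]{KSS23Anisotropic} (this is also what handles the surface term $g\,\mH^{n-1}\lfloor\partial D$ and the gradient bound near $\partial D$, where interior elliptic estimates as in your last step do not apply because $\mathrm{dist}(x,\partial D)$ can be much smaller than $|x|$); then, assuming $S_r:=\|u\|_{L^\infty(B_r)}+r\|\nabla u\|_{L^\infty(B_r)}\le Cr^{m+2}$ fails, it selects radii $r_j$ and constants $\rho_j\to\infty$ with $S_{r_j}=\rho_j r_j^{m+2}$ and rescales by $\rho_j r_j^{m+2}$. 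The extra factor $\rho_j$ forces the rescaled right-hand side (both $f\mL^n\lfloor B_2$ and $g\,\mH^{n-1}\lfloor\partial D$, as well as the $qu$ term) to vanish in the limit, so the weak-$\star$ limit $v$ is a nontrivial solution of $\nabla\cdot A(0)\nabla v=0$ vanishing on a cone, contradicting unique continuation. If you want to salvage your plan, you should either adopt this normalization trick, or build the cone-vanishing/unique-continuation step into the decay improvement (e.g.\ as a dichotomy: either $S_{\tau r}\le \tau^{m+2+\delta}S_r$ or $S_r\le C r^{m+2}$), and you should take the Lipschitz regularity across $\partial D$ as an input rather than attempt to recover gradient bounds from interior estimates.
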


\begin{remark*} 
We will show in \Cref{lem:nondegeneracy} below that this decay rate is optimal. 
\end{remark*}

\begin{proof}[Proof of \Cref{lem:regularity1}] 
First, by \cite[Lemma~2.2]{KSS23Anisotropic}, we obtain that $u\in C^{0.1}(B_{1})$ and we see that $S_{r}:=\norm{u}_{L^{\infty}(B_{r})} + r\norm{\nabla u}_{L^{\infty}(B_{r})}$ is monotone non-decreasing, then the set $\mD$ of points, at which it is discontinuous, is at most infinitely countable. 
We consider the set $\phi(\mD)$, where $\phi(r)=r^{-m-2}S_{r}$, which is at most infinitely countable. We want to prove that $S_{r}\le Cr^{m+2}$. Suppose the contrary: for any increasing positive sequence $\{\rho_{j}\}$ with $\{\rho_{j}\}\cap\phi(\mD)=\emptyset$ and $\rho_{j}\rightarrow\infty$ as $j\rightarrow\infty$, there is $\tilde{r}_{j}\in(0,1]$ such that $S_{\tilde{r}_{j}}>\rho_{j}\tilde{r}_{j}^{m+2}$. Choosing $r_{j}\in(0,1]$ to be the supremum of all $\tilde{r}_{j}\in(0,1]$ with this property, we have 
\begin{equation*}
S_{r_{j}} = \rho_{j}r_{j}^{m+2} ,\quad S_{r} \le \rho_{j}r^{m+2} \quad \text{for $r\ge r_{j}$,}
\end{equation*} 
and the sequence $\{r_{j}\}$ is nonincreasing with $r_{j}\rightarrow 0$ as $j\rightarrow\infty$. 

If we define the rescaled functions 
\begin{equation*}
u_{j}(x) := \frac{u(2r_{j}x)}{\rho_{j}r_{j}^{m+2}} \quad \text{for all $x\in B_{1/r_{j}}$,} 
\end{equation*}
then 
\begin{equation}
\norm{u_{j}}_{L^{\infty}(B_{1/2})} + \norm{\nabla u_{j}}_{L^{\infty}(B_{1/2})} = \frac{S_{r_{j}}}{\rho_{j}r_{j}^{m+2}} = 1 \label{eq:LB-weak-star}
\end{equation}
and 
\begin{equation*}
\norm{u_{j}}_{L^{\infty}(B_{1})} + \norm{\nabla u_{j}}_{L^{\infty}(B_{1})} = \frac{S_{2r_{j}}}{\rho_{j}r_{j}^{m+2}} \le \frac{\rho_{j}(2r_{j})^{m+2}}{\rho_{j}r_{j}^{m+2}} = 2^{m+2}. 
\end{equation*}
Since $(u_{j})$ is uniformly bounded in $C^{0,1}(\overline{B_{1/2}})$, and by Banach-Alaoglu theorem there is a subsequence\footnote{By Arzela-Ascoli theorem, the sequence also converges strongly to $v$ in $C^{0}(\overline{B_{1/2}})$.}, still denoted by $(u_{j})$, converging to some $v$ in $C^{0,1}(\overline{B_{1/2}})$ weak-$\star$. 

On the other hand, 
\begin{equation*}
\nabla_{x}\cdot(A(2r_{j}x)\nabla u_{j}(x)) + \frac{4}{\rho_{j}}(qu)(2r_{j}x) = \frac{4}{\rho_{j}}f(2r_{j}x)\mL^{n}\lfloor B_{2} + \frac{4}{\rho_{j}r_{j}} g(2r_{j}x) \mH^{n-1} \lfloor \partial D 
\end{equation*}
in $B_{1}$, more precisely, 
\begin{equation*}
\begin{aligned}
& -\int_{B_{1}} A(2r_{j}x)\nabla u_{j}(x) \cdot \nabla\phi(x)\,\rmd x \\
& \quad = \frac{4}{\rho_{j}}\int_{B_{1}} f(2r_{j}x)\phi(x)\,\rmd x + \frac{4}{\rho_{j}r_{j}} \int_{\partial D} g(2r_{j}x)\phi(x)\,\rmd S_{x} - \frac{4}{\rho_{j}}\int_{B_{1}}(qu)(2r_{j}x)\phi(x)\,\rmd x 
\end{aligned}
\end{equation*}
for all $\phi\in C_{c}^{\infty}(B_{1})$. It is not difficult to see that 
\begin{equation*}
\begin{aligned}
& \left| \frac{4}{\rho_{j}}\int_{B_{1}} f(2r_{j}x)\phi(x)\,\rmd x + \frac{4}{\rho_{j}r_{j}} \int_{\partial D} g(2r_{j}x)\phi(x)\,\rmd S_{x} \right| \le C\norm{\phi}_{L^{\infty}(B_{1})} \rho_{j}^{-1} 
\end{aligned}
\end{equation*}
and 
\begin{equation*}
\left| \frac{4}{\rho_{j}}\int_{B_{1}}(qu)(2r_{j}x)\phi(x)\,\rmd x \right| \le C \norm{\phi}_{L^{\infty}(B_{1})} r_{j}^{2}, 
\end{equation*}
therefore 
\begin{equation}
\begin{aligned}
& \left| \int_{B_{1}} A(0)\nabla u_{j}(x) \cdot \nabla\phi(x)\,\rmd x \right| \\
& \quad \le \left| \int_{B_{1}} (A(2r_{j}x)-A(0))\nabla u_{j}(x) \cdot \nabla\phi(x)\,\rmd x \right|  + \left| \int_{B_{1}} A(2r_{j}x)\nabla u_{j}(x) \cdot \nabla\phi(x)\,\rmd x \right| \\
& \quad \le C\norm{\nabla\phi}_{L^{\infty}(B_{1})}r_{j} + C\norm{\phi}_{L^{\infty}(B_{1})}(\rho_{j}^{-1} + r_{j}^{2}). 
\end{aligned} \label{eq:PDE-weak-star} 
\end{equation}

Applying the weak-$\star$ convergence of $(u_{j})$ to $v$ in \eqref{eq:LB-weak-star} and \eqref{eq:PDE-weak-star}, we obtain 
\begin{equation*}
\norm{v}_{L^{\infty}(B_{1/2})} + \norm{\nabla v}_{L^{\infty}(B_{1/2})} \ge 1 ,\quad \nabla\cdot A(0)\nabla v=0 \text{ in $B_{1}$ (distribution sense).}
\end{equation*}
Moreover, since $D$ is a Lipschitz domain with $0\in \partial D$ and $u|_{B_{2}\setminus\overline{D}}=0$, it follows that there is an open cone $C$ in $\mR^{n}$ so that each $u_{j}$ and hence $v$ vanish in $C\cap B_{1/2}$. By the unique continuation principle for the elliptic operator $\nabla\cdot A(0)\nabla$, we conclude $v=0$ in $B_{1/2}$. This contradicts the condition $\norm{v}_{L^{\infty}(B_{1/2})} + \norm{\nabla v}_{L^{\infty}(B_{1/2})} \ge 1$. 
\end{proof}

\subsection{Blowup solutions\label{sec:blowup}} 

Throughout this section, we will make the following standing assumptions: Let $D$ be a bounded Lipschitz domain with $0\in\partial D$, let $q\in L^{\infty}(B_{2})$ and let $u\in C_{\rm loc}^{0,1}(B_{2})$ be a solution to  
\begin{subequations} \label{eq:blowup-u}
\begin{equation}
(\nabla\cdot A(x)\nabla + q)u = f \mL^{n}\lfloor \{u\neq 0\} + g \mH^{n-1}\lfloor \partial D, 
\end{equation}
satisfying 
\begin{equation}
\abs{u(x)} + \abs{x} \abs{\nabla u(x)} \le C\abs{x}^{m+2}. \label{eq:C01-regularity}
\end{equation}
Note that the function $u_{r}(x):=u(rx)/r^{m+2}$ also satisfies the estimate \eqref{eq:C01-regularity} and solves the equation 
\begin{equation*}
\begin{aligned} 
\nabla\cdot A(rx)\nabla u_{r}(x) &= (r^{-m}f(rx) - r^{2}q(rx)u_{r}(x)) \mL^{n}\lfloor \{u_{r}\neq 0\} \\
& \quad + r^{-m-1} (g\mH^{n-1}\lfloor\partial D)(rx). 
\end{aligned} 
\end{equation*} 
Similar to \cite[Lemma 2.5]{SS25vanishingcontrast}, we now assume that 
\begin{equation}
\begin{aligned}
& \text{$f = H+R$, where $H$ is homogeneous polynomial of degree $m$} \\ 
& \text{and $\abs{R(x)}\le C\abs{x}^{m+\alpha}$ for some $\alpha>0$,}  
\end{aligned}\label{eq:blowup-assu1}
\end{equation} 
as well as 
\begin{equation}
\text{$\abs{g(x)} \le C\abs{x}^{m+1+\alpha}$ for $\mH^{n-1}$-a.e. $x\in\partial D\cap B_{2}$.}  \label{eq:blowup-assu2}
\end{equation} 
We also assume that $A\in (C^{0,1}(\overline{D}))^{n\times n}$ is a real symmetric matrix-valued function, satisfying the condition of uniform ellipticity \eqref{eq:ellipticity}, as well as 
\begin{equation}
\abs{A(x)-\id}\le C\abs{x} ,\quad \abs{\nabla A(x)}\le C. \label{eq:A-approx-id}
\end{equation}
\end{subequations}
In view of Banach-Alaoglu theorem, we say that $v$ is a \emph{blowup limit of $u$ of order $m+2$ at $0$} if there is a sequence $r_{j}\rightarrow 0$ so that $u_{r_{j}}\rightarrow v$ in $C^{0,1}(\overline{B_{1}})$ weak-$\star$.

Similarly as in \cite[Section~3.1]{SS25vanishingcontrast}, we introduce a balanced  energy functional (see \cite[Lemma~22]{CSY18FB} or \cite[Section~3.5]{PSU12FreeBoundary}): 
\begin{equation*}
\begin{aligned} 
W_{A}(r,u) &:= \frac{1}{r^{2m+n+2}} \int_{B_{r}} \left( A(x)\nabla u\cdot\nabla u + 2Hu \right) \, \rmd x - \frac{m+2}{r^{2m+n+3}} \int_{\partial B_{r}} u^{2} \, \rmd S \\
& = \int_{B_{1}} \left(A(rx)\nabla u_{r}\cdot\nabla u_{r} + 2Hu_{r}\right) \,\rmd x - (m+2) \int_{\partial B_{1}} u_{r}^{2} \,\rmd S. 
\end{aligned}
\end{equation*}
We compute its derivative (we slightly abuse the notations in the computations below) 
\begin{equation*}
\begin{aligned}
\frac{1}{2}\partial_{r}W_{A}(r,u) &= \int_{B_{1}} \left( A(rx)\nabla u_{r}\cdot \nabla \partial_{r}u_{r} + H \partial_{r}u_{r} \right) \,\rmd x + \frac{1}{2}\int_{B_{1}} \partial_{r}(A(rx)) \nabla u_{r}\cdot\nabla u_{r} \, \rmd x \\ 
& \quad - (m+2) \int_{\partial B_{1}} u_{r} \partial_{r}u_{r} \,\rmd S \\ 
&= \int_{B_{1}} \left( -\nabla\cdot(A(rx)\nabla u_{r}) + H \right) \partial_{r}u_{r} \,\rmd x + \frac{1}{2}\int_{B_{1}} \partial_{r}(A(rx)) \nabla u_{r}\cdot\nabla u_{r} \, \rmd x \\ 
& \quad + \int_{\partial B_{1}} \overbrace{\left( \partial_{\nu}u_{r} - (m+2)u_{r} \right)}^{=\,r\partial_{r}u_{r}} \partial_{r}u_{r} \, \rmd S+ \int_{\partial B_{1}} \nu\cdot (A(rx)-\id) \nabla u_{r} \partial_{r}u_{r} \,\rmd S \\ 
&= \int_{\partial B_{1}} r(\partial_{r}u_{r})^{2}\,\rmd S -\int_{B_{1}} r^{-m}R(rx)\partial_{r}u_{r}\,\rmd x + \int_{B_{1}} r^{2}q(rx)u_{r}\partial_{r}u_{r}\,\rmd x \\
& \quad + \frac{1}{2}\int_{B_{1}} \partial_{r}(A(rx)) \nabla u_{r}\cdot\nabla u_{r} \, \rmd x \\ 
& \quad  - r^{-m-1}\int_{B_{1}\cap (r^{-1}\partial D)} g(rx)\partial_{r}u_{r} \,\rmd S + \int_{\partial B_{1}} \nu\cdot (A(rx)-\id) \nabla u_{r} \partial_{r}u_{r} \,\rmd S .
\end{aligned}
\end{equation*} 
We introduce the functional 
\begin{equation*}
\begin{aligned}
F_{A}(r,u) &= 2\int_{0}^{r}\int_{B_{1}} \tau^{-m}R(\tau x) \partial_{\tau}u_{\tau}\,\rmd x\,\rmd\tau - 2 \int_{0}^{r}\int_{B_{1}} \tau^{2}q(\tau x)u_{\tau}\partial_{\tau}u_{\tau}\,\rmd x\,\rmd \tau \\ 
& \quad + \int_{0}^{r}\int_{B_{1}} \partial_{\tau}(A(\tau x))\nabla u_{\tau}\cdot\nabla u_{\tau} \,\rmd x \,\rmd\tau \\ 
& \quad + 2\int_{0}^{r} \int_{B_{1}\cap(\tau^{-1}\partial D)} \tau^{-m-1}g(\tau x) \partial_{\tau}u_{\tau} \,\rmd S\,\rmd\tau \\
& \quad - 2 \int_{0}^{r} \int_{\partial B_{1}} \nu\cdot (A(\tau x)-\id) \nabla u_{\tau} \partial_{\tau}u_{\tau} \,\rmd S \,\rmd\tau. 
\end{aligned}
\end{equation*}
in order to extract the following non-negative quantity: 
\begin{equation*}
\partial_{r}\left(W_{A}(r,u)-F_{A}(r,u)\right) = 2 \int_{\partial B_{1}} r(\partial_{r}u_{r})^{2} \,\rmd S \ge 0. 
\end{equation*}

We denote 
\begin{equation*}
W(r,u) := W_{\id}(r,u) = \frac{1}{r^{2m+n+2}} \int_{B_{r}} \left( \abs{\nabla u}^{2} + 2Hu \right) \, \rmd x - \frac{m+2}{r^{2m+n+3}} \int_{\partial B_{r}} u^{2} \, \rmd S. 
\end{equation*}
From \eqref{eq:C01-regularity} and \eqref{eq:A-approx-id} we see that 
\begin{equation}
\abs{W(r,u)-W_{A}(r,u)} = \frac{1}{r^{2m+n+2}} \left| \int_{B_{r}} (A(x)-\id)\nabla u\cdot \nabla u \,\rmd x \right| \le Cr. \label{eq:approx-Weiss-functional}
\end{equation} 
Since $u_{r}$ satisfies the estimates \eqref{eq:C01-regularity} and 
\begin{equation*}
\abs{\partial_{r}u_{r}(x)} = \abs{-(m+2)r^{-m-3}u(rx)+r^{-m-2}x\cdot\nabla u(sx)} \le Cr^{-1}\abs{x}^{m+2}, 
\end{equation*}
then 
\begin{equation*}
\begin{aligned}
\abs{F_{A}(r,u)} &\le C \int_{0}^{r} \int_{B_{1}} (\tau^{\alpha}\abs{x}^{m+\alpha}+\tau^{2}\abs{u_{\tau}(x)}) \abs{\partial_{\tau}u_{\tau}(x)} \,\rmd x\,\rmd\tau \\ 
& \quad + C\int_{0}^{r}\int_{B_{1}} \tau \abs{\nabla u_{\tau}}^{2}\,\rmd x\,\rmd\tau + C\int_{0}^{r} \int_{B_{1}\cap(\tau^{-1}\partial D)} \tau^{\alpha}\abs{x}^{m+1+\alpha} \abs{\partial_{\tau}u_{\tau}(x)} \,\rmd S \,\rmd\tau \\ 
& \quad + C \int_{0}^{r} \int_{\partial B_{1}} \tau\abs{x} \abs{\nabla u_{\tau}} \abs{\partial_{\tau}u_{\tau}} \,\rmd S \,\rmd\tau \\ 
& \le C r^{\alpha}
\end{aligned}
\end{equation*}
It is easy to see that  
\begin{equation*}
W_{A}(r,u) \ge -C. 
\end{equation*}
Thus, the non-decreasing quantity $r\mapsto W_{A}(r,u)+F_{A}(r,u)$ has a finite limit as $r\rightarrow 0$, and this limit equals $W_{A}(0+,u)$. Now by \eqref{eq:approx-Weiss-functional} we see that the quantity $r\mapsto W(r,u)$ has a finite limit as $r\rightarrow 0$, and this limit equals $W(0+,u)=W_{A}(0+,u)$. Since 
\begin{equation*}
W(rs,u) = W(s,u_{r}) \quad \text{for all $r,s\in(0,1]$,} 
\end{equation*}
we can use the standard argument in \cite[Lemma~16]{Yeressian2016} (we omit the details here) to conclude the following lemma: 

\begin{lemma}\label{lem:blowup-procedure}
Suppose $u\in C_{\rm loc}^{0,1}(B_{2})$ satisfies \eqref{eq:blowup-u}. Any blowup limit $v$ (of order $m+2$) of $u$ is homogeneous of degree $m+2$ solves the equation 
\begin{equation*}
\Delta v = H\mL^{n}\lfloor\{v\neq 0\} \quad \text{in $B_{1}$.} 
\end{equation*}
One has $W(s,v)=W(0+,u)$ for $0<s\le 1$. Moreover, the homogeneous degree $m+2$ extension of $v$ (still denoted by $v$) solves the equation 
\begin{equation}
\Delta v = H\mL^{n}\lfloor\{v\neq 0\} \quad \text{in $\mR^{n}$.} \label{eq:blowup-solution}
\end{equation}
\end{lemma}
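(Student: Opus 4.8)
The plan is to run the standard Weiss-type blowup argument based on the monotonicity of $r\mapsto W_A(r,u)-F_A(r,u)$ and the existence of $W(0+,u)=W_A(0+,u)$ already established above. Let $(r_j)$ realize the blowup, so $u_{r_j}\to v$ in $C^{0,1}(\overline{B_1})$ weak-$\star$. By \eqref{eq:C01-regularity} the $u_{r_j}$ are bounded in $C^{0,1}(\overline{B_R})$ for every $R$, so a diagonal extraction lets me regard $v$ as defined on all of $\mR^n$, with $u_{r_j}\to v$ locally uniformly and $\nabla u_{r_j}\rightharpoonup\nabla v$ weakly in $L^2_{\rm loc}$; once $v$ is shown to be homogeneous of degree $m+2$, this $v$ is precisely the homogeneous extension in the statement.

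First I would identify the equation for $v$. In the open set $\{v\neq0\}$ one argues pointwise: if $v(x_0)\neq0$, local uniform convergence gives $u_{r_j}\neq0$ on a ball $B_\delta(x_0)$ for large $j$, and since $u^{\rm sc}$ (hence $u_{r_j}$) vanishes on $r_j^{-1}\partial D$ that ball avoids $r_j^{-1}\partial D$, so on it $u_{r_j}$ solves $\nabla\cdot(A(r_jx)\nabla u_{r_j})=r_j^{-m}f(r_jx)-r_j^2q(r_jx)u_{r_j}$ with no surface term; letting $j\to\infty$ (using $A(r_jx)\to\id$, $r_j^{-m}f(r_jx)\to H$ and $r_j^2q(r_jx)u_{r_j}\to0$ locally uniformly) gives $\Delta v=H$ there. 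Globally, I would pass to the limit in the weak formulation of the equation for $u_{r_j}$ tested against an arbitrary $\phi\in C_c^\infty$; the surface integral is $O(r_j^\alpha)$ because $\mH^{n-1}(r_j^{-1}\partial D\cap B_1)$ stays bounded for the Lipschitz domain $D$, so the limit is $\Delta v=H\chi_\infty$ for some weak-$\star$ $L^\infty$ limit $\chi_\infty\in[0,1]$ of $\chi_{\{u_{r_j}\neq0\}}$. Since then $\Delta v\in L^\infty$, $v\in W^{2,p}_{\rm loc}$ and hence $\Delta v=0$ a.e.\ on $\{v=0\}$; as $\{H=0\}$ is Lebesgue-null (the case $H\equiv0$ being trivial), this forces $\chi_\infty=\chi_{\{v\neq0\}}$, giving $\Delta v=H\mL^n\lfloor\{v\neq0\}$, first in $B_1$ and then, since the equation is covariant under $v\mapsto r^{-(m+2)}v(r\,\cdot)$, in all of $\mR^n$.

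Next I would establish homogeneity and the identity $W(s,v)=W(0+,u)$. The crucial step is strong $H^1_{\rm loc}$ convergence $u_{r_j}\to v$: testing the equation for $u_{r_j}$ against $\eta^2u_{r_j}$ ($\eta\in C_c^\infty(B_1)$), the cancellations $\chi_{\{u_{r_j}\neq0\}}u_{r_j}=u_{r_j}$ and $u_{r_j}|_{r_j^{-1}\partial D}=0$ remove the characteristic-function factor and the surface integral, so $\int\eta^2A(r_jx)\nabla u_{r_j}\cdot\nabla u_{r_j}\,\rmd x\to-\int(v\,\nabla(\eta^2)\cdot\nabla v+H\eta^2v)\,\rmd x$; comparing this with $\int\eta^2A(r_jx)\nabla(u_{r_j}-v)\cdot\nabla(u_{r_j}-v)\,\rmd x$, strong convergence follows once I check $\int\nabla v\cdot\nabla(\eta^2v)\,\rmd x=-\int H\eta^2v\,\rmd x$, which I get from $\Delta v=H$ on $\{v\neq0\}$ by truncating $\eta^2v$ at level $\varepsilon$ (those truncations being supported in compact subsets of $\{v\neq0\}$) and sending $\varepsilon\to0$. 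Strong convergence then makes every term of $W(s,u_{r_j})$ converge to the corresponding term of $W(s,v)$, while $W(s,u_{r_j})=W(r_js,u)\to W(0+,u)$ by the scaling identity; hence $W(s,v)=W(0+,u)$ for $0<s\le1$, so $W(\cdot,v)$ is constant. Repeating the Weiss-functional computation for $v$ itself — now $R=0$, $q=0$, $A=\id$, no surface measure, and the interior term $\int H\chi_{\{v_r=0\}}\partial_r v_r$ vanishing because $\nabla v_r=0$ a.e.\ on $\{v_r=0\}$ — yields $\tfrac12\partial_r W(r,v)=\int_{\partial B_1}r(\partial_r v_r)^2\,\rmd S$, which must then be zero, so $\partial_r v_r\equiv0$ and $v$ is homogeneous of degree $m+2$.

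I expect the main obstacle to be pinning down $\chi_\infty=\chi_{\{v\neq0\}}$: the inclusion $\chi_\infty=1$ on $\{v\neq0\}$ is soft, but ruling out any mass of $\chi_\infty$ on the free boundary of $v$ (which a priori could carry positive Lebesgue measure) is exactly what forces the detour through $\Delta v\in L^\infty$, the vanishing of $D^2v$ on $\{v=0\}$, and the nondegeneracy $|\{H=0\}|=0$ of the nonzero polynomial $H$. A secondary delicate point is the strong $H^1_{\rm loc}$ convergence, where the two cancellations above are precisely what let one compute the limiting Dirichlet energies without a priori control of the free-boundary behaviour.
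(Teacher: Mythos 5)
Your proposal is correct and follows essentially the same route as the paper: the paper's proof simply invokes the standard Weiss-monotonicity argument (citing Yeressian's Lemma~16) built on the functionals $W_A$, $F_A$ and the scaling identity $W(rs,u)=W(s,u_r)$ established just before the lemma, and your write-up fills in exactly those omitted details (strong $H^1_{\rm loc}$ convergence of the blowup sequence, identification of the limit equation via $\Delta v=0$ a.e.\ on $\{v=0\}$ together with $|\{H=0\}|=0$, constancy of $W(\cdot,v)$, and homogeneity from $\partial_r v_r\equiv 0$). The only loose ends (the endpoint $s=1$, admissibility of Lipschitz test functions, and the formal differentiation of the Weiss functional) are minor and at the same level of rigor as the paper's own computation.
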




We recall the following result, which characterizes all blowup limits in two dimensions: 

\begin{lemma}[{\cite[Theorem~3.12]{SS25vanishingcontrast}}]\label{lem:SS25-lem3.12}
Let $n=2$. Suppose that $v\in C_{\rm loc}^{1,1}(\mR^{2})$ is nontrivial, homogeneous of degree $m+2$ and solves the equation 
\begin{equation*}
\Delta v = H \mL^{n}\lfloor \{v\neq 0\} \quad \text{in $\mR^{2}$} 
\end{equation*}
where $H$ is a harmonic homogeneous polynomial of degree $m$. Then one of the following holds: 
\begin{enumerate}
\renewcommand{\labelenumi}{\theenumi}
\renewcommand{\theenumi}{\rm (\alph{enumi})} 
\item $\supp\,(v)$ is a half space and $v$ is a polynomial\footnote{After a rotation, the form given in \cite[Lemma~3.3 or Lemma~3.5]{SS25vanishingcontrast}.} of order $m+2$. 
\item $\supp\,(v)=\mR^{2}$ and $v=\frac{1}{4m+4}\abs{x}^{2}H + w$ where $w$ is a harmonic homogeneous polynomial of degree $m+2$. 
\end{enumerate}
\end{lemma}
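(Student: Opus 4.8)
The plan is to pass to polar coordinates and reduce the PDE to an ODE on the circle. Writing $x = r\omega$ with $\omega = (\cos\theta,\sin\theta)$ and using homogeneity, $v(x) = r^{m+2}\phi(\theta)$ with $\phi := v|_{\partial B_1}\in C^{1,1}(\partial B_1)$, and $H(x) = r^m h(\theta)$ with $h(\theta) = a\cos(m\theta) + b\sin(m\theta)$ for constants $a,b$ (since $H$ is harmonic). Because $\Delta(r^{m+2}\phi) = r^m(\phi'' + (m+2)^2\phi)$ away from the origin and $\{v\neq 0\}$ is the cone over $\Omega := \{\phi\neq 0\}$, the equation is equivalent to
\[
\phi'' + (m+2)^2\phi = h\,\chi_{\{\phi\neq 0\}} \quad\text{a.e. on }\partial B_1 .
\]
On $\Omega$ this is a linear ODE whose particular solution $\frac{1}{4m+4}h$ corresponds to $\frac{1}{4m+4}\abs{x}^2 H$ (note $\Delta(\abs{x}^2 H) = (4m+4)H$) and whose homogeneous solutions $\cos((m+2)\theta)$, $\sin((m+2)\theta)$ correspond to harmonic homogeneous polynomials of degree $m+2$; hence on each arc of $\Omega$ the function $v$ equals $\frac{1}{4m+4}\abs{x}^2 H$ plus a harmonic homogeneous polynomial of degree $m+2$. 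The whole content of the lemma is that these arc-wise polynomials must glue into one, and that $\Omega$ is either a half-circle or dense. We may assume $H\not\equiv 0$, the case $H\equiv 0$ being immediate.

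First I would treat the case in which $\{\phi = 0\}$ has empty interior in $\partial B_1$. Set $w := v - \frac{1}{4m+4}\abs{x}^2 H$, so that $\Delta w = \Delta v - H = -H\chi_{\{v=0\}}$. Using that $\phi$ is real-analytic on each component of the open set $\Omega$ (hence has finitely many zeros on each closed component-arc), $\{\phi = 0\}$ is at most countable; hence $\{v=0\}$ is a Lebesgue-null cone, $\Delta w = 0$ a.e., and since $w\in C_{\rm loc}^{1,1}$ this forces $\Delta w = 0$ in $\mR^2$. A homogeneous harmonic function of degree $m+2$ is a harmonic homogeneous polynomial, which is alternative (b); and $\supp(v) = \mR^2$ because $\Omega$ is dense.

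Suppose instead $\{\phi = 0\}$ contains an arc, i.e.\ $v$ vanishes on a nontrivial open sector. I would partition $\partial B_1$ into \emph{gaps} (maximal arcs on which $\phi\equiv 0$) and \emph{blocks} (maximal arcs on which $\phi$ coincides with a single real-analytic solution of the ODE). Since $\phi\in C^1$, at every gap/block junction $\phi = \phi' = 0$; and two blocks cannot abut, since matching $C^1$ data at their common endpoint and invoking uniqueness for the ODE would merge them. Hence gaps and blocks alternate around the circle, and every block has $\phi = \phi' = 0$ at both endpoints. The crucial step is that such a block $(\theta_1, \theta_1 + \gamma)$ has length exactly $\gamma = \pi$: the solution with $\phi(\theta_1) = \phi'(\theta_1) = 0$ is $\frac{1}{4m+4}\bigl(h(\theta) - h(\theta_1)\cos((m+2)(\theta-\theta_1)) - \frac{1}{m+2}h'(\theta_1)\sin((m+2)(\theta-\theta_1))\bigr)$, and demanding that its Cauchy data vanish also at $\theta_1 + \gamma$ reduces, after exploiting that the pair $(h, h'/m)$ rotates by $-m\theta$ while $(\cos((m+2)\cdot),\sin((m+2)\cdot))$ rotates by $-(m+2)\theta$, to the solvability condition $(m+1)\abs{\sin\gamma} = \abs{\sin((m+1)\gamma)}$; by the sharp inequality $\abs{\sin(N\gamma)}\le N\abs{\sin\gamma}$ (with equality, for $N\ge 2$, only when $\sin\gamma = 0$), the only $\gamma\in(0,2\pi)$ is $\pi$ (the small-$m$ cases being checked directly). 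Since $\partial B_1$ has circumference $2\pi$, there is exactly one block — a half-circle — and one gap; the displayed formula then exhibits $v$ on the corresponding half-plane as a polynomial of degree $m+2$ vanishing to second order on the bounding line, while $v\equiv 0$ on the complementary half-plane, which is alternative (a), with $\supp(v)$ a half-plane.

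I expect the length-$\pi$ step to be the main obstacle: one must control all blocks simultaneously and exclude every length other than $\pi$, which rests on the clean but nontrivial reduction to $(m+1)\abs{\sin\gamma} = \abs{\sin((m+1)\gamma)}$ and on the equality case of $\abs{\sin(N\gamma)}\le N\abs{\sin\gamma}$ (provable by induction through the angle-addition formula). A secondary point is the measure-theoretic bookkeeping near the free boundary — that an interior-free $\{\phi=0\}$ is null and that the equation genuinely holds a.e. — where the hypothesised $C^{1,1}$ regularity of $v$ is used throughout, both to obtain $C^1$ matching at the gaps and to upgrade ``$\Delta w = 0$ a.e.'' to ``$w$ harmonic''.
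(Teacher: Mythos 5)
The paper itself does not prove this lemma; it is imported verbatim from \cite[Theorem~3.12]{SS25vanishingcontrast}, so your argument has to stand on its own, and in its essential content it does. The polar reduction to $\phi''+(m+2)^{2}\phi=h\,\chi_{\{\phi\neq 0\}}$ on the circle, the identification of the particular solution $\tfrac{1}{4m+4}h$ with $\tfrac{1}{4m+4}\abs{x}^{2}H$, the vanishing Cauchy data at free boundary points (forced by $C^{1,1}$ regularity of $v$), and above all the solvability computation are correct: demanding zero Cauchy data at both ends of an arc of length $\gamma$ does reduce to $(m+1)^{2}\sin^{2}\gamma=\sin^{2}((m+1)\gamma)$, which for $m\ge 1$ forces $\gamma=\pi$ by the equality case of $\abs{\sin(N\gamma)}\le N\abs{\sin\gamma}$, and the case $m=0$ is checked directly as you indicate.

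The genuine gap is the measure-theoretic step in Case 1 (and, implicitly, the partition claim in Case 2). You assert that if $\{\phi=0\}$ has empty interior it is at most countable ``because $\phi$ is real-analytic on each component of $\Omega=\{\phi\neq 0\}$.'' That inference is invalid: the zero set is the \emph{complement} of $\Omega$, and a closed, nowhere dense subset of the circle can be uncountable and of positive measure (a fat Cantor set); analyticity of $\phi$ on the components of $\Omega$ controls only the endpoints of those components, not the rest of the complement, and with $\abs{\{v=0\}}>0$ you cannot pass from $\Delta w=-H\chi_{\{v=0\}}$ to $\Delta w=0$. The same unexamined possibility underlies the Case 2 statement that ``gaps and blocks alternate around the circle,'' which presupposes that gaps and blocks exhaust $\partial B_{1}$. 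Fortunately your own length-$\pi$ lemma repairs both at once: every maximal arc on which $\phi$ coincides with a solution of the inhomogeneous ODE has endpoints (if it is not the full circle) that are accumulation points of $\{\phi=0\}$, hence carry zero Cauchy data by $C^{1}$ regularity, hence the arc has length exactly $\pi$; two such arcs cannot coexist, since they would abut and glue across the common endpoint (the equation holds a.e.\ near it with continuous right-hand side, so $\phi$ is $C^{2}$ there and ODE uniqueness applies). Thus either the unique maximal arc is the whole circle, in which case $\{\phi=0\}$ is the zero set of a nontrivial analytic function, hence finite, and alternative (b) follows; or it is a half-circle, and its complement, containing no further arcs of nonvanishing $\phi$, must satisfy $\phi\equiv 0$, giving alternative (a). Restructuring the proof around this single dichotomy eliminates both the countability claim and the alternation assumption; as written, those two steps do not hold up.
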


\subsection{Non-degeneracy and weak flatness\label{sec:weak-flatness}} 

Throughout this section, we will make the following standing assumptions: Let $D$ be a bounded Lipschitz domain with $0\in\partial D$, let $q\in C_{\rm loc}^{\alpha}(B_{2})$ and let $u\in C_{\rm loc}^{0,1}(B_{2})$ be a solution to 
\begin{subequations} \label{eq:blowup-u1}
\begin{equation}
(\nabla\cdot A(x)\nabla + q)u = f \mL^{n}\lfloor D + g \mH^{n-1}\lfloor \partial D ,\quad u|_{B_{2}\setminus\overline{D}}=0.  
\end{equation}
We assume that 
\begin{equation}
\begin{aligned}
&\text{$f=P+R$, where $P\not\equiv 0$ is a homogeneous polynomial of order $m\ge 0$} \\ 
&\text{and $\abs{R(x)}\le C\abs{x}^{m+\alpha}$ for some $\alpha>0$}
\end{aligned}
\end{equation}
\end{subequations} 
We also assume that $g$ satisfies \eqref{eq:blowup-assu2} and $A$ satisfies \eqref{eq:A-approx-id}. 
In this setting, we say that $v$ is a blowup limit of $u$ at 0 if there is a sequence $r_{j}\rightarrow 0$ so that $u_{r_{j}}\rightarrow v$ in $C^{0,1}(\overline{B_{1}})$ weak-$\star$, where $u_{r}(x):=u(rx)/r^{m+2}$.

On the other hand, since $D$ is a Lipschitz domain, there are $\delta>0$ and $r_{0}>0$ so that we have the following consequence of the interior cone property: 
\begin{equation}
\text{If $x\in\overline{D}\cap\overline{B_{r_{0}}}$, then $B_{r}(x)\cap\overline{D}$ contains a ball of radius $\delta r$ whenever $r\le r_{0}$.} \label{eq:interior-cone}
\end{equation} 
Based on the above observations, as in \cite[Lemma~4.1]{SS25vanishingcontrast}, we first prove a non-degeneracy result showing that $u$ cannot vanish faster than a  certain order in $\supp\,(u)$: 

\begin{lemma}\label{lem:nondegeneracy}
Suppose $u\in C_{\rm loc}^{0,1}(B_{2})$ satisfies \eqref{eq:blowup-u1}. For any $\epsilon\in(0,1)$, there exists a pair of positive numbers $(r_{\epsilon},c_{\epsilon})$ such that 
\begin{equation*}
\norm{u}_{L^{\infty}(B_{\epsilon\abs{x}}(x))} \ge c_{\epsilon} \abs{x}^{m+2} \quad \text{for all $x\in\overline{D}\cap\overline{B_{r_{\epsilon}}}$}. 
\end{equation*}
\end{lemma}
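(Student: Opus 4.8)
The plan is to argue by contradiction, exactly in the spirit of \cite[Lemma~4.1]{SS25vanishingcontrast}. Fix $\epsilon\in(0,1)$ and suppose the conclusion fails: then there is a sequence $x_j\in\overline{D}$ with $x_j\to 0$ and a scale $r_j:=\abs{x_j}\to 0$ such that $\norm{u}_{L^\infty(B_{\epsilon r_j}(x_j))}\le c_j r_j^{m+2}$ with $c_j\to 0$. Set $u_{r_j}(x):=u(r_j x)/r_j^{m+2}$ and $\xi_j:=x_j/r_j\in\partial B_1$ (up to passing to a subsequence $\xi_j\to\xi_\infty$). By \Cref{lem:regularity1} (equivalently \eqref{eq:C01-regularity}) the rescalings $u_{r_j}$ are uniformly bounded in $C^{0,1}$ on each $\overline{B_\rho}$, so by Banach--Alaoglu a subsequence converges weak-$\star$ to a blowup limit $v$; by \Cref{lem:blowup-procedure} this $v$ is homogeneous of degree $m+2$ and solves $\Delta v = H\mL^n\lfloor\{v\neq 0\}$ in $\mathbb{R}^n$, where $H$ is the harmonic-homogeneous main part of $f$ (note here $H = P$, the given nonzero homogeneous polynomial of order $m$, so $H\not\equiv 0$). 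The hypothesis forces $\norm{v}_{L^\infty(B_\epsilon(\xi_\infty))}=0$, i.e.\ $v$ vanishes on a ball inside $\{v\neq 0\}^c$, hence $\Delta v = 0$ there, which is automatic; the real content is that $v$ vanishes to infinite order at the interior point $\xi_\infty$ of a ball on which it is harmonic — wait, more precisely $v\equiv 0$ on $B_\epsilon(\xi_\infty)$.

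The key point is then to derive a contradiction from $v\equiv 0$ on an open ball together with the structural equation. First I would observe, using the interior cone property \eqref{eq:interior-cone}, that $\xi_j\in\overline{D}\cap\overline{B_{r_0}}$ (after rescaling, $x_j\in\overline{D}$), so $B_{r_j}(x_j)\cap\overline{D}$ contains a ball of radius $\delta r_j$; after rescaling by $r_j$ this says $B_1(\xi_j)\cap\overline{D_{r_j}}$ contains a ball $B_\delta(y_j)$ with $\abs{y_j-\xi_j}\le 1$, where $D_{r_j}=r_j^{-1}D$. Passing to the limit, $\supp(v)$ — or rather the region where the equation sees a source — must contain a ball of radius $\delta$ near $\xi_\infty$. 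Now on $B_\epsilon(\xi_\infty)$ we have $v\equiv 0$; but on the nearby ball $B_\delta(y_\infty)\subset D_0$ (the blowup of $D$), the equation for $u$ on $D$ combined with $u\le 0$-type control, or rather the fact that the right-hand side $f\mL^n\lfloor D$ has main part $P\not\equiv 0$, should force $v$ to be the particular (nonzero) solution $\frac{1}{4m+4}\abs{x}^2 H + w$ of \Cref{lem:SS25-lem3.12}(b) locally, or more robustly: $\Delta v = P$ on an open subset of $D_0$, so $v$ cannot vanish on any open subset of that set. The cleanest route, following \cite{SS25vanishingcontrast}, is: $v$ is harmonic on $B_\epsilon(\xi_\infty)$ and real-analytic there since it vanishes on an open set; by unique continuation $v\equiv 0$ on the connected component of $\{v=0\}^\circ$... — instead, I would invoke that $v$ homogeneous of degree $m+2$ and $v\equiv 0$ on $B_\epsilon(\xi_\infty)$ with $\xi_\infty\neq 0$ forces, by a standard argument (extend radially, use that the zero set of a homogeneous function is a cone), $v\equiv 0$ on the open cone generated by $B_\epsilon(\xi_\infty)$, hence on a set of positive measure intersecting $D_0$; then $\Delta v = H\chi_{\{v\neq 0\}}$ gives $0 = H$ on that set, contradicting $H=P\not\equiv 0$ (a nonzero polynomial cannot vanish on an open set).

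The main obstacle I anticipate is the bookkeeping needed to transfer the interior-cone information \eqref{eq:interior-cone} through the rescaling and the weak-$\star$ limit, so as to guarantee that the cone generated by the ball $B_\epsilon(\xi_\infty)$ (on which $v$ vanishes) genuinely meets the blowup set $D_0$ on an open set — equivalently, that the basepoints $x_j$ do not drift in a way that makes $B_{\epsilon r_j}(x_j)$ stay near $\partial D$ only. Here one uses that $x_j\in\overline D$ and $r_0$-interior cone property to place a definite-size ball of $D_{r_j}$ near $\xi_j$, then the constraint $v\equiv 0$ near $\xi_\infty$ propagates into that ball by homogeneity/analyticity. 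A secondary technical point is ensuring the convergence of the domains $D_{r_j}$ (in a sufficiently weak sense, e.g.\ local Hausdorff convergence of closures along a further subsequence) so that ``$B_\delta(y_\infty)\subset D_0$'' is meaningful; since $D$ is Lipschitz this is standard, and the argument only needs that the characteristic functions $\chi_{D_{r_j}}$ do not vanish in $L^1_{\rm loc}$ near the relevant ball, which the interior cone property guarantees. Once these are in place, the contradiction with $P\not\equiv 0$ is immediate, and a standard covering/diagonal argument upgrades the sequential statement to the uniform bound $\norm{u}_{L^\infty(B_{\epsilon\abs{x}}(x))}\ge c_\epsilon\abs{x}^{m+2}$ for all $x\in\overline D\cap\overline{B_{r_\epsilon}}$.
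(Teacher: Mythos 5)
There is a genuine gap at the heart of your argument. The contradiction you ultimately invoke --- ``$v\equiv 0$ on an open set intersecting $D_0$, then $\Delta v = H\mL^{n}\lfloor\{v\neq 0\}$ gives $0=H$ on that set'' --- does not work: on any open set where $v\equiv 0$ the right-hand side $H\chi_{\{v\neq 0\}}$ vanishes as well, so both sides are zero and no contradiction results. You noticed this yourself earlier in the paragraph (``hence $\Delta v=0$ there, which is automatic'') but then used the flawed step anyway. The whole point of this lemma, as the remark following it in the paper stresses, is that the source in \eqref{eq:blowup-u1} is $f\,\mL^{n}\lfloor D$ (supported on the \emph{domain}, not on $\{u\neq 0\}$), and the contradiction must be extracted \emph{before} passing to the limit equation of \Cref{lem:blowup-procedure}, whose right-hand side carries the indicator $\chi_{\{v\neq 0\}}$ and therefore cannot see the nonvanishing of $P$ in the region where the blowup is zero. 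There is also a geometric mismatch in your bookkeeping: you apply the interior cone property \eqref{eq:interior-cone} at scale $r_j$, producing a ball $B_{\delta}(y_j)\subset B_{1}(\xi_j)\cap\overline{D_{r_j}}$ which need not lie in $B_{\epsilon}(\xi_j)$, the region where $u_{r_j}$ is known to be small; the attempted repair via homogeneity (the cone generated by $B_{\epsilon}(\xi_\infty)$) does not close this, since that cone need not meet $B_{\delta}(y_\infty)$ at all, and even if it did you would still face the $\chi_{\{v\neq 0\}}$ problem above.

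The paper's proof avoids both issues and does not need the blowup limit $v$, its homogeneity, or \Cref{lem:blowup-procedure}. It applies \eqref{eq:interior-cone} at the point $x_j$ with radius $\epsilon r_j$, so the resulting ball $B_{\delta\epsilon r_j}(y_j)$ lies simultaneously in $\overline{D}$ and in $B_{\epsilon r_j}(x_j)$, where $\norm{u}_{L^{\infty}}<\tfrac{1}{j}r_j^{m+2}$. After rescaling, $u_{r_j}\to 0$ uniformly on a fixed ball $B_{\delta\epsilon/2}(z)$, while on that same ball the rescaled equation has source $r_j^{-m}f(r_jx)\chi_{D}(r_jx)=P(x)+o(1)$ (plus $O(r_j^{\alpha})$ boundary-measure and lower-order terms), so $\nabla\cdot A(r_jx)\nabla u_{r_j}\to P$ in the distributional sense there; since the uniform convergence $u_{r_j}\to 0$ forces this distributional limit to be $0$, one gets $P\equiv 0$ on an open set, contradicting $P\not\equiv 0$. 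If you want to salvage your write-up, replace the limit-equation step by this sequence-level comparison on the correctly placed ball.
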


\begin{remark*}
If we further assume that 
\begin{equation}
\text{$\overline{B_{r_{0}}}\cap\overline{D}\subset\supp\,(f)$ for some $r_{0}>0$,} \label{eq:observation-supp-u}
\end{equation}
then \Cref{lem:nondegeneracy} remains valid with $r_{\epsilon}$ replaced by a constant $r_{0}$ that is independent of $\epsilon$. The condition \eqref{eq:observation-supp-u} was valid in the setting of \cite{SS25vanishingcontrast}. However, the choice made in \eqref{eq:scattered-field-distribution-form-b} does not guarantee \eqref{eq:observation-supp-u}, unless the coefficient $A$ is analytic in a neighborhood of $x_{0}=0$. Thus the statement and proof of \Cref{lem:nondegeneracy} are slightly different from \cite[Lemma~4.1]{SS25vanishingcontrast}.
\end{remark*}

\begin{proof}[Proof of \Cref{lem:nondegeneracy}]
Suppose, to the contrary, that there exists some $\epsilon \in (0,1)$ for which no such pair of positive numbers $(r_{\epsilon},c_{\epsilon})$ can be found. That is, for any pair of positive numbers $(r,c)$, there exists $x\in\overline{D}\cap\overline{B_{r}}$ such that  
\begin{equation*}
\norm{u}_{L^{\infty}(B_{\epsilon\abs{x}}(x))} < c\abs{x}^{m+2}. 
\end{equation*}
For each $j\ge 1$, by choosing the pair $(r,c)=(1/j,1/j)$, there exists $x_{j}\in\overline{D}\cap\overline{B_{1/j}}$ such that 
\begin{equation}
\norm{u}_{L^{\infty}(B_{\epsilon\abs{x_{j}}}(x_{j}))} < \frac{1}{j}\abs{x_{j}}^{m+2}. \label{eq:contradiction-nondegenerate}
\end{equation}
There exists a subsequence, still denoted as $(x_{j})$, such that $x_{j}\rightarrow 0$. Let $r_{j}=\abs{x_{j}}$ and the blowup sequence $u_{r_{j}}(x):=u(r_{j}x)/r_{j}^{m+2}$. Now using \eqref{eq:interior-cone} with $x=x_{j}$ and $r=\epsilon r_{j}$, for each sufficiently large $j\ge 1$ there is $y_{j}$ so that 
\begin{equation}
B_{\delta\epsilon r_{j}}(y_{j}) \subset B_{\epsilon r_{j}}(x_{j})\cap\overline{D}. \label{eq:interior-cone1}
\end{equation}
It also follows that $(1-\epsilon)r_{j}\le\abs{y_{j}}\le(1+\epsilon)r_{j}$. Then by \eqref{eq:contradiction-nondegenerate} we have 
\begin{equation*}
\norm{u}_{L^{\infty}(B_{\delta\epsilon r_{j}}(y_{j}))} < \frac{1}{j}r_{j}^{m+2}. 
\end{equation*}
Writing $z_{j}=y_{j}/r_{j}$, this means that 
\begin{equation*}
\norm{u_{r_{j}}}_{L^{\infty}(B_{\delta\epsilon}(z_{j}))} < \frac{1}{j}. 
\end{equation*}
By considering a suitable subsequence, we may assume that $z_{j}$ converges to some $z$ with $1-\epsilon\le\abs{z}\le1+\epsilon$. Thus we have 
\begin{equation}
u_{r_{j}}\rightarrow 0 \quad \text{in $L^{\infty}(B_{\delta\epsilon/2}(z))$}. \label{eq:contradiction-blowup}
\end{equation}
On the other hand, since 
\begin{equation*}
\begin{aligned} 
\nabla\cdot A(r_{j}x)\nabla u_{r_{j}}(x) &= - r_{j}^{2}q(r_{j}x)u_{r_{j}}(x) + r_{j}^{-m}(f \mL^{n}\lfloor D)(r_{j}x) \\
& \quad + r_{j}^{-m-1} (g\mH^{n-1}\lfloor\partial D)(r_{j}x), 
\end{aligned} 
\end{equation*}
and since $\abs{g(x)}\le C\abs{x}^{m+1+\alpha}$, then we have 
\begin{equation*}
\nabla\cdot A(r_{j}x)\nabla u_{r_{j}}(x) = - r_{j}^{2}q(r_{j}x)u_{r_{j}}(x) + r_{j}^{-m}(f \mL^{n}\lfloor D)(r_{j}x) + O(r_{j}^{\alpha}) (\mH^{n-1}\lfloor\partial D)(r_{j}x)
\end{equation*}
uniformly on $x\in\overline{B_{1}}$. Now \eqref{eq:interior-cone1} implies that 
\begin{equation*}
\nabla\cdot A(r_{j}x)\nabla u_{r_{j}}(x) = - r_{j}^{2}q(r_{j}x)u_{r_{j}}(x) + P(x) + r_{j}^{-m}R(r_{j}x) + O(r_{j}^{\alpha}) (\mH^{n-1}\lfloor\partial D)(r_{j}x) \text{ in $B_{\delta\epsilon}(z_{j})$}. 
\end{equation*}
Since $r_{j}\rightarrow 0$ and $\abs{u_{r_{j}}(x)}\le C$, $\abs{R(x)}\le C\abs{x}^{m+\alpha}$, we have 
\begin{equation*}
\nabla\cdot A(r_{j}x)\nabla u_{r_{j}} \rightarrow P \quad \text{in the distributional sense on $B_{\delta\epsilon/2}(z)$}. 
\end{equation*}
Since $P$ does not vanish on any open subset of $\calS^{n-1}$, this contradicts \eqref{eq:contradiction-blowup} by uniqueness of distributional limits, completing the proof by contradiction. 
\end{proof}

As shown in \Cref{lem:blowup-procedure}, the blowup limit $u_{0}$ in this work is identical to the one in \cite[Section~4]{SS25vanishingcontrast}. With \Cref{lem:nondegeneracy} at hand, we can establish some weak flatness properties for $\partial D$ at $0$, using the exact same argument in \cite[Lemma~4.3]{SS25vanishingcontrast}: 

\begin{lemma}\label{lem:SS25-lem4.3} 
Suppose $u\in C_{\rm loc}^{0,1}(B_{2})$ satisfies \eqref{eq:blowup-u1} and suppose that $u_{0}$ is a blowup limit of such $u$ at $0$ with $\supp\,(u_{0})=\overline{\mR_{+}^{n}}$. Then for any $\delta>0$ there is $r>0$ so that $\partial D\cap B_{r} \subset \{\abs{x_{n}} \le \delta r\}$. 
\end{lemma}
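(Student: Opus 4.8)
The plan is to argue by contradiction, exactly as in \cite[Lemma~4.3]{SS25vanishingcontrast}, exploiting the non-degeneracy estimate of \Cref{lem:nondegeneracy} together with the fact that $u$ vanishes identically off $\overline{D}$. Suppose the conclusion fails: there is $\delta>0$ and a sequence $r_{j}\rightarrow 0$ together with points $p_{j}\in\partial D\cap B_{r_{j}}$ with $\abs{(p_{j})_{n}}>\delta r_{j}$. Passing to the blowup scale, set $\tilde{p}_{j}=p_{j}/r_{j}$, so that (after a subsequence) $\tilde{p}_{j}\rightarrow p\in\overline{B_{1}}$ with $\abs{p_{n}}\ge\delta>0$; thus $p$ lies strictly outside $\overline{\mR_{+}^{n}}=\supp\,(u_{0})$, say $p_{n}<-\delta$ after relabeling (the case $p_{n}>\delta$ being symmetric — both signs must be ruled out, using that $\partial D$ separates $D$ from its complement locally).

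The key dichotomy is: either a definite-size portion of $B_{\epsilon\abs{p_{j}}}(p_{j})$ lies inside $\overline{D}$, or a definite-size portion lies in the complement. In the first case, \Cref{lem:nondegeneracy} applied at the point $p_{j}$ (which belongs to $\overline{D}\cap\overline{B_{r_{\epsilon}}}$ for $j$ large, since $p_{j}\in\partial D\subset\overline D$ and $\abs{p_{j}}<r_{j}\to 0$) gives $\norm{u}_{L^{\infty}(B_{\epsilon\abs{p_{j}}}(p_{j}))}\ge c_{\epsilon}\abs{p_{j}}^{m+2}$, hence after rescaling $\norm{u_{r_{j}}}_{L^{\infty}(B_{\epsilon}(\tilde p_{j}))}\ge c_{\epsilon}\abs{\tilde p_{j}}^{m+2}\ge c_{\epsilon}(1-\epsilon)^{m+2}$; choosing $\epsilon=\epsilon(\delta)$ small enough that $B_{\epsilon}(\tilde p_{j})\subset\{x_{n}<-\delta/2\}$ for $j$ large, and passing to the $C^{0,1}$ weak-$\star$ limit $u_{r_{j}}\rightarrow u_{0}$ (which converges uniformly by Arzelà–Ascoli), we conclude $u_{0}$ does not vanish identically on $\{x_{n}<-\delta/2\}\cap B_{1}$, contradicting $\supp\,(u_{0})=\overline{\mR_{+}^{n}}$.

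In the complementary case, a ball $B_{c\epsilon\abs{p_{j}}}(q_{j})\subset B_{\epsilon\abs{p_{j}}}(p_{j})\setminus\overline{D}$ of definite relative size exists — because $p_{j}\in\partial D$ and $\mR^{n}\setminus\overline D$ also enjoys an exterior cone property at boundary points of the Lipschitz domain (or one simply notes that if $B_{\epsilon\abs{p_j}}(p_j)$ is not eventually contained in $\overline D$ then a fixed fraction escapes it). On such a ball $u$ vanishes identically, so $u_{r_{j}}\equiv 0$ on the rescaled ball $B_{c\epsilon}(q_{j}/r_{j})$, and in the limit $u_{0}$ vanishes on a ball centered near $p$, i.e.\ in a neighborhood of a point with $x_{n}<-\delta/2$ — which is consistent with $\supp\,(u_0)=\overline{\mR^n_+}$ and gives no contradiction. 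Hence this case cannot by itself close the argument; the contradiction must come from the first case, and the real content is to show that the first alternative is always the relevant one, or rather, to combine both: one tracks whether $p_{j}$ approaches the support of $u_0$ from inside $D$ or from outside. Since $\partial D\subset\overline D$ and $\supp u_0 = \overline{\mR^n_+}$ forces, via \Cref{lem:nondegeneracy}, that $u_{0}$ is nonzero throughout $\{x_n>0\}\cap B_1$ and zero on $\{x_n<0\}\cap B_1$, any boundary point $p_j$ of $D$ scaling to a limit $p$ with $\abs{p_n}>\delta$ yields a contradiction: if $p_n>0$ then a neighborhood of $\tilde p_j$ is eventually in $\{x_n>0\}$ where, by non-degeneracy at nearby interior-cone points, $u_{r_j}$ stays bounded below, yet $p_j\in\partial D$ means $u_{r_j}=0$ on a definite-size subset of that neighborhood (the part outside $\overline D$) — but $\{x_n>0\}\subset\supp u_0$ would then have to contain a ball on which $u_0=0$, impossible; if $p_n<0$ symmetric reasoning applies with the roles of $D$ and its complement reversed.

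The main obstacle, and the point requiring the most care, is the bookkeeping of which side of $\partial D$ the points $p_{j}$ limit toward, and correctly invoking the interior/exterior cone property \eqref{eq:interior-cone} at the scale $\epsilon r_{j}$ to produce the definite-size good balls on which either the non-degeneracy lower bound of \Cref{lem:nondegeneracy} or the identity $u\equiv 0$ applies. Since \cite[Lemma~4.3]{SS25vanishingcontrast} carries out precisely this argument under the same standing assumptions — and \Cref{lem:nondegeneracy} has been established above in a form sufficient for it — we simply follow that proof verbatim, noting that none of the modifications needed for our setting (the weaker support hypothesis discussed in the remark after \Cref{lem:nondegeneracy}) affect this step.
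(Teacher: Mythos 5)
Your proposal does eventually assemble the right argument — which is also the paper's argument, since the paper proves this lemma simply by invoking \cite[Lemma~4.3]{SS25vanishingcontrast} with \Cref{lem:nondegeneracy} in hand — but as written it reaches it through a false framing that you yourself flag mid-proof, plus two assertions that are incorrect as stated. The claim that the rescaled limit point $p$ ``lies strictly outside $\supp(u_0)$'' and that the two signs of $p_n$ are ``symmetric'' is wrong: if $p_n>\delta$ the point lies \emph{inside} the support, and the two cases need different mechanisms; this is exactly why your interior/exterior ``dichotomy'' collapses (its exterior branch, as you admit, yields no contradiction). The correct case split is on the sign of $p_n$, not on which side of $\partial D$ occupies a definite portion of the ball. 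For $p_n\le-\delta$: \Cref{lem:nondegeneracy} applies directly at $p_j\in\partial D\subset\overline{D}$ (no interior-portion hypothesis is needed; it is anyway automatic from \eqref{eq:interior-cone}), giving after rescaling $\norm{u_{r_j}}_{L^{\infty}(B_{\epsilon\abs{\tilde p_j}}(\tilde p_j))}\ge c_{\epsilon}\delta^{m+2}$ — note the constant is $c_{\epsilon}\delta^{m+2}$, not $c_{\epsilon}(1-\epsilon)^{m+2}$, since $\abs{\tilde p_j}$ is only bounded below by $\delta$; choosing $\epsilon=\epsilon(\delta)$ so that this ball lies in $\{x_n<-\delta/2\}$ and using uniform convergence forces $u_0\not\equiv0$ there, contradicting $\supp(u_0)=\overline{\mR^n_+}$. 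For $p_n\ge\delta$: the exterior cone of the Lipschitz domain at $p_j$ provides a ball of radius comparable to $\epsilon\abs{p_j}$ inside $B_{\epsilon\abs{p_j}}(p_j)\setminus\overline{D}$, where $u\equiv0$; in the limit $u_0$ vanishes identically on an open ball contained in $\{x_n>\delta/2\}$, which contradicts $\supp(u_0)=\overline{\mR^n_+}$ by the definition of support alone. Your stronger claims in the patch — that $u_0$ is ``nonzero throughout $\{x_n>0\}\cap B_1$'' and that $u_{r_j}$ ``stays bounded below'' near $\tilde p_j$ — are false in general (the blowups of \Cref{lem:SS25-lem3.12} have nodal sets inside their support), but they are also unnecessary: only ``not identically zero on any open ball inside the support'' is used, and that is immediate from the definition of support.

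Two bookkeeping points you should make explicit: take the bad boundary points along the blowup scales $r_j$ themselves (the negation of the conclusion supplies a bad point at every scale, so this costs nothing), and arrange (e.g.\ by taking $p_j\in B_{r_j/2}$ or shrinking $\epsilon$) that the relevant balls stay inside the region where $u_{r_j}\rightarrow u_0$ uniformly. Also, the exterior cone property of $\mR^n\setminus\overline{D}$, which the $p_n\ge\delta$ case requires, is not recorded in the paper (only the interior statement \eqref{eq:interior-cone} is) and should be stated, though it is standard for Lipschitz domains. With the proof reorganized along these lines it coincides with the argument of \cite[Lemma~4.3]{SS25vanishingcontrast} that the paper cites verbatim.
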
 

\subsection{Conclusions\label{sec:proof}} 

To prepare for the main results, we summarize the preceding discussion in the following proposition.

\begin{proposition}[Properties of blowups: I]\label{prop:properties-blowups1}
Let $D\subset\mR^{n}$ be a Lipschitz domain such that $0\in\partial D$, let $q\in C_{\rm loc}^{\alpha}(B_{2})$, let $f\in C_{\rm loc}^{\alpha}(B_{2})$, let $A\in (C^{0,1}(\overline{D}))^{n\times n}$ be a real symmetric matrix-valued function, satisfying the condition of uniform ellipticity \eqref{eq:ellipticity} and 
\begin{equation*}
\abs{A(x)-\id}\le C\abs{x} ,\quad \abs{\nabla A(x)}\le C. 
\end{equation*}
Suppose that $u\in H_{\rm loc}^{1}(B_{2})$ solves 
\begin{equation*}
(\nabla\cdot A(x)\nabla + q)u = f \mL^{n}\lfloor D + g \mH^{n-1}\lfloor \partial D ,\quad u|_{B_{2}\setminus\overline{D}}=0.  
\end{equation*}
Let $m\ge 0$ be an integer. Assume that $\abs{g(x)}\le C\abs{x}^{m+1+\alpha}$ for $\mH^{n-1}$-a.e. $x\in\partial D\cap B_{2}$ and $f=H+R$, where $H\not\equiv 0$ is a homogeneous polynomial of degree $m$ and $\abs{R(x)}\le C\abs{x}^{m+\alpha}$. 
Then $u$ has the following properties: 
\begin{enumerate}
\renewcommand{\labelenumi}{\theenumi}
\renewcommand{\theenumi}{\rm (\alph{enumi})}
\item $u\in C_{\rm loc}^{0,1}(B_{2})$ with $\abs{u(x)} + \abs{x} \abs{\nabla u(x)} \le C\abs{x}^{m+2}$. 
\item If $v$ is any blowup limit of $u_{r}(x):=u(rx)/r^{m+2}$, then $v$ is homogeneous of degree $m+2$ and its homogeneous degree $m+2$ extension, still denoted by $v$, solves $\Delta v = H\mL^{n}\lfloor\{v\neq 0\}$ in $\mR^{n}$. 
\item \label{itm:nondegeneracy-blowup-limit} If $v$ is any blowup limit of $u_{r}(x):=u(rx)/r^{m+2}$, then $\supp\,(v)\neq\emptyset$ and for any $\epsilon\in(0,1)$ there is $c_{\epsilon}>0$ so that  
\begin{equation*}
\norm{v}_{L^{\infty}(B_{\epsilon\abs{x}}(x))} \ge c_{\epsilon} \abs{x}^{m+2} \quad \text{for all $x\in\supp\,(v)\cap\overline{B_{1/2}}$}. 
\end{equation*}
\item \label{itm:weak-flatness} If there exists a blowup limit $v$ of $u_{r}(x):=u(rx)/r^{m+2}$ such that its support is the half-space $x\cdot e\ge 0$, then for any $\delta>0$ there is $r>0$ such that $\partial D\cap B_{r} \subset \{\abs{x\cdot e}\le \delta r\}$. 
\end{enumerate}
\end{proposition}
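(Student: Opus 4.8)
\textbf{Proof strategy for \Cref{prop:properties-blowups1}.} The plan is simply to assemble the lemmas of \Cref{sec:C01-regularity,sec:blowup,sec:weak-flatness} into a single package, since each item of the proposition is a restatement (or mild repackaging) of one of those results under the present hypotheses. First I would verify that the standing assumptions of those sections are met: $D$ is a bounded Lipschitz domain with $0\in\partial D$ (we may assume boundedness after intersecting with $B_2$, or note that only the local structure near $0$ is used), $A\in(C^{0,1}(\overline D))^{n\times n}$ is symmetric and uniformly elliptic and satisfies \eqref{eq:A-approx-id} by hypothesis, $q\in C^\alpha_{\rm loc}(B_2)\subset L^\infty(B_2)$, and the right-hand side data $f=H+R$, $g$ obey \eqref{eq:blowup-assu1}--\eqref{eq:blowup-assu2}. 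Here one small point to check is that the $H^1_{\rm loc}$ solution in the proposition is the same object as the $C^{0,1}_{\rm loc}$ solutions assumed in the later sections; this is exactly what item (a) provides, so it must be proved first and then fed into (b)--(d).

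For item (a), I would apply \Cref{lem:regularity1} with $q$ in place of $q$ and $f,g$ satisfying $\abs{f(x)}\le C\abs{x}^m$ (which follows from $f=H+R$ with $H$ homogeneous of degree $m$ and $\abs{R(x)}\le C\abs{x}^{m+\alpha}$) and $\abs{g(x)}\le C\abs{x}^{m+1}$ (which follows from the stronger bound $\abs{g(x)}\le C\abs{x}^{m+1+\alpha}$ assumed here). Note \eqref{eq:PDE1} in \Cref{lem:regularity1} has $f\mL^n\lfloor B_2$ on the right rather than $f\mL^n\lfloor D$; but since $u|_{B_2\setminus\overline D}=0$ the two distributions $f\mL^n\lfloor D$ and $(f\chi_D)\mL^n\lfloor B_2$ agree, and $f\chi_D$ still satisfies the pointwise bound, so \Cref{lem:regularity1} applies verbatim and yields $u\in C^{0,1}_{\rm loc}(B_2)$ with the decay $\abs{u(x)}+\abs{x}\abs{\nabla u(x)}\le C\abs{x}^{m+2}$. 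For item (b): having established (a), the hypothesis \eqref{eq:blowup-u} of \Cref{sec:blowup} holds with the decay \eqref{eq:C01-regularity} in hand, so \Cref{lem:blowup-procedure} applies directly and gives that every blowup limit $v$ of $u_r$ is homogeneous of degree $m+2$ with $\Delta v=H\mL^n\lfloor\{v\neq0\}$ in $\mR^n$ after homogeneous extension. One should record that $H$ being harmonic here is forced by no external assumption in the proposition statement; but \Cref{lem:blowup-procedure} does not require harmonicity of $H$, so (b) holds as stated. For (c): \Cref{lem:nondegeneracy} applies to $u$ (its standing assumptions \eqref{eq:blowup-u1} coincide with ours, with $P=H\not\equiv0$), giving $\norm{u}_{L^\infty(B_{\epsilon\abs{x}}(x))}\ge c_\epsilon\abs{x}^{m+2}$ for $x\in\overline D\cap\overline{B_{r_\epsilon}}$; rescaling by $r_j\to0$ and passing to the (uniform on compact sets, by Arzel\`a--Ascoli) limit $u_{r_j}\to v$ transfers this lower bound to $v$ on $\supp(v)\cap\overline{B_{1/2}}$, and in particular $\supp(v)\neq\emptyset$ since $0\in\partial D\subset\overline D$ forces some mass near the origin — this is the standard passage from the non-degeneracy of $u$ to that of $v$, identical to the argument in \cite[Section~4]{SS25vanishingcontrast}. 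For (d): \Cref{lem:SS25-lem4.3} gives precisely the conclusion when $\supp(u_0)=\overline{\mR^n_+}$; an orthogonal change of coordinates sending $e$ to $e_n$ reduces the general half-space $\{x\cdot e\ge0\}$ to this case, and the weak-flatness conclusion $\partial D\cap B_r\subset\{\abs{x\cdot e}\le\delta r\}$ is rotation-covariant, so (d) follows.

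The main obstacle — really the only place where care is needed rather than routine citation — is the bookkeeping around the domain of the source term and the passage from $u$'s non-degeneracy to $v$'s in item (c): one must confirm that the blowup subsequence realizing a given limit $v$ can be chosen so that the non-degeneracy estimate of \Cref{lem:nondegeneracy}, which a priori holds only on the shrinking balls $\overline{B_{r_\epsilon}}$ and for points in $\overline D$, survives the limit on the \emph{fixed} ball $\overline{B_{1/2}}$ and for points in $\supp(v)$ (which need not equal the naive limit of $\overline D$ unless one invokes the interior cone property \eqref{eq:interior-cone}, as in the proof of the lemma). This is handled exactly as in \cite{SS25vanishingcontrast}: cover $\supp(v)\cap\overline{B_{1/2}}$, apply the lemma along the rescaled points, and use uniform convergence. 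Everything else is a direct invocation of \Cref{lem:regularity1}, \Cref{lem:blowup-procedure}, \Cref{lem:nondegeneracy}, and \Cref{lem:SS25-lem4.3}, so the proof is essentially a paragraph of cross-references.
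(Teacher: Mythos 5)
Your proposal is correct and follows essentially the same route as the paper, whose proof is precisely the one-line combination of \Cref{lem:regularity1,lem:blowup-procedure,lem:nondegeneracy,lem:SS25-lem4.3} (together with \cite[Lemma~2.5]{SS25vanishingcontrast}) that you assemble. The extra bookkeeping you supply --- identifying $f\,\mL^{n}\lfloor D$ with $(f\chi_{D})\,\mL^{n}\lfloor B_{2}$ for item (a), transferring the non-degeneracy of $u$ to the blowup limit $v$ by rescaling and uniform convergence in item (c), and the rotation reduction in item (d) --- is exactly the detail the paper leaves implicit, so no further changes are needed.
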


\begin{proof}
This follows by combining \cite[Lemma~2.5]{SS25vanishingcontrast}, \Cref{lem:regularity1,lem:blowup-procedure,lem:nondegeneracy,lem:SS25-lem4.3}. 
\end{proof}

We refine the above result for our purpose: 

\begin{proposition}[Properties of blowups: II]\label{prop:properties-blowups2}
Suppose that all assumptions in \Cref{prop:properties-blowups1} hold. If we additionally assume that $H$ is harmonic and $n=2$, then the support of any blowup limit $v$ is a half space\footnote{In this case, the blowup limits have explicit form depending on $H$, see \cite[Lemma~3.5]{SS25vanishingcontrast}.}. 
\end{proposition}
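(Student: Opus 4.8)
The plan is to reduce to the classification of homogeneous solutions in two dimensions (\Cref{lem:SS25-lem3.12}) and to rule out the alternative with full-space support by invoking the exterior cone condition that the Lipschitz domain $D$ satisfies at $0$.

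First I would fix an arbitrary blowup limit $v$ of $u_{r}(x):=u(rx)/r^{m+2}$. By \Cref{prop:properties-blowups1}, $v$ is nontrivial (its support is nonempty by the non-degeneracy statement), and its homogeneous degree $m+2$ extension, still written $v$, is homogeneous of degree $m+2$ and solves $\Delta v=H\mL^{n}\lfloor\{v\neq 0\}$ in $\mR^{2}$, where $H$ is now assumed harmonic and homogeneous of degree $m$. To be able to apply \Cref{lem:SS25-lem3.12} one additionally needs $v\in C_{\rm loc}^{1,1}(\mR^{2})$; since $H\chi_{\{v\neq 0\}}\in L_{\rm loc}^{\infty}$ one already has $v\in W_{\rm loc}^{2,p}$ for every $p<\infty$, and the sharp $C_{\rm loc}^{1,1}$ bound is the standard obstacle-type regularity for this equation established in \cite{SS25vanishingcontrast}, which I would simply quote.

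Next I would use that $D$ is Lipschitz with $0\in\partial D$: exactly as in the proof of \Cref{lem:regularity1}, the exterior cone property gives an open cone $C\subset\mR^{n}$ with $C\cap B_{1/2}\subset B_{2}\setminus\overline{D}$, so that $u|_{B_{2}\setminus\overline{D}}=0$ forces each rescaling $u_{r_{j}}$ (for $r_{j}$ small along the blowup sequence), and therefore the limit $v$, to vanish identically on $C\cap B_{1/2}$. By homogeneity $v$ then vanishes on the entire cone $C$, hence $\supp(v)\subset\mR^{2}\setminus C$, and in particular $\supp(v)\neq\mR^{2}$.

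Finally I would invoke \Cref{lem:SS25-lem3.12}: its alternative (b) would require $\supp(v)=\mR^{2}$, which the previous step excludes, so alternative (a) must hold and $\supp(v)$ is a half space (with $v$ of the explicit polynomial form recorded in \cite{SS25vanishingcontrast}). As $v$ was an arbitrary blowup limit, this proves the proposition. The only substantial input is the two-dimensional classification \Cref{lem:SS25-lem3.12}, which is imported as a black box; the rest is bookkeeping, the one point deserving a little care being the verification that the hypotheses of that lemma --- nontriviality, homogeneity, and above all $C_{\rm loc}^{1,1}$ regularity of $v$ --- are all met.
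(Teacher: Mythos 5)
Your argument is correct and follows essentially the same route as the paper: rule out $\supp(v)=\mR^{2}$ via the exterior cone property of the Lipschitz domain $D$ at $0$, note $\supp(v)\neq\emptyset$ from the non-degeneracy statement in \Cref{prop:properties-blowups1}, and conclude by the classification in \Cref{lem:SS25-lem3.12}. Your explicit check of the $C^{1,1}_{\rm loc}$ hypothesis of that lemma (quoted from \cite{SS25vanishingcontrast}) is a point the paper leaves implicit, but it does not change the approach.
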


\begin{proof}
Since $D$ has Lipschitz boundary, and $v$ vanishes in an exterior cone, it follows that $\supp\,(v)\neq\mR^{2}$.
On the other hand, by using \Cref{prop:properties-blowups1}\ref{itm:nondegeneracy-blowup-limit}, we see that $\supp\,(v)\neq\emptyset$. Finally, our proposition immediately follows from \Cref{lem:SS25-lem3.12}. 
\end{proof}

We can now prove the  free boundary regularity in two dimensions. 

\begin{proposition}\label{prop:2D-FB}
Suppose that all assumptions in \Cref{prop:properties-blowups2} hold. 
\begin{enumerate}
\renewcommand{\labelenumi}{\theenumi}
\renewcommand{\theenumi}{\rm (\alph{enumi})}
\item \label{itm:piecewise-C1-case} If $\partial D$ is piecewise $C^{1}$, then $\partial D$ is $C^{1}$ near $0$. 
\item \label{itm:convex-case} If $D$ is convex, then $\partial D$ is $C^{1}$ near $0$. 
\end{enumerate}
\end{proposition}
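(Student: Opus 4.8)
The plan is to read both statements off the two facts established above. By \Cref{prop:properties-blowups2}, under the present hypotheses every blowup limit $v$ of $u_r(x)=u(rx)/r^{m+2}$ has support equal to a half space $\{x\cdot e\ge 0\}$ for some unit vector $e$ (a priori depending on the blowup sequence), and by \Cref{prop:properties-blowups1}\ref{itm:weak-flatness} this yields weak flatness of $\partial D$ at $0$: for every $\delta>0$ there is $r>0$ with $\partial D\cap B_r\subset\{|x\cdot e|\le\delta r\}$. In both cases the idea is that $\partial D$ near $0$ is, in suitable coordinates, the graph $\{x_2=\eta(x_1)\}$ of a function with $\eta(0)=0$ that possesses one-sided derivatives $\eta'(0^\pm)$: for (a) because $\eta|_{[-r,0]}$ and $\eta|_{[0,r]}$ are $C^{1}$ up to $0$; for (b) because, after a rotation placing the supporting line of the convex set $D$ at $0$ along $\{x_2=0\}$, $\partial D$ near $0$ is the graph of a convex (hence one-sided differentiable) function $\eta\ge 0$ with $\eta(0)=0$. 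The first step, common to both, is to feed the graph into the weak flatness inclusion and conclude $\eta'(0^+)=\eta'(0^-)$.

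Concretely, I would fix a sequence $\delta_k\downarrow 0$ and choose $r_k=r(\delta_k)$ realizing the inclusion; shrinking if necessary, assume $r_k\downarrow 0$. For a fixed small $\lambda>0$ the point $(\lambda r_k,\eta(\lambda r_k))$ lies on $\partial D\cap B_{r_k}$ (using that $\eta$ is Lipschitz near $0$), hence $|\lambda e_1+\tfrac{\eta(\lambda r_k)}{r_k}e_2|\le\delta_k$; since $\tfrac{\eta(\lambda r_k)}{r_k}=\lambda\cdot\tfrac{\eta(\lambda r_k)-\eta(0)}{\lambda r_k}\to\lambda\,\eta'(0^+)$, letting $k\to\infty$ gives $e_1+\eta'(0^+)e_2=0$, and the same computation with $\lambda<0$ gives $e_1+\eta'(0^-)e_2=0$. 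Subtracting, $(\eta'(0^+)-\eta'(0^-))e_2=0$; moreover $e_2\ne 0$, for otherwise $e_1=0$ as well, contradicting $|e|=1$. Hence $\eta'(0^+)=\eta'(0^-)$, i.e.\ $\eta$ is differentiable at $0$. In case (a) this already finishes the proof: $\eta$ is $C^{1}$ on $[-r,0]$ and on $[0,r]$ with matching one-sided derivatives at $0$, so $\eta\in C^{1}$ near $0$ and $\partial D$ is $C^{1}$ near $0$.

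Case (b) requires one more step, since differentiability of the convex $\eta$ at $0$ only says $\partial D$ is $C^{1}$ at the single point $0$, while a convex curve can carry corner points accumulating at a differentiable point. Here I would use that a convex function differentiable at every point of an open interval is automatically $C^{1}$ there (its derivative is monotone and has the intermediate value property), so it suffices to exclude corner points $x_k\to 0$, $x_k\ne 0$, of $\partial D$. At such a corner $\partial D$ locally bounds a genuine wedge of opening $\theta_k\in(0,\pi)$, in particular $\theta_k\ne\pi$; performing the linear change of variables $y=A(x_k)^{-1/2}(x-x_k)$ — legitimate in the blowup because $|A(x)-\id|\le C|x|^{2+\alpha}$ and $|\nabla A|\le C|x|^{1+\alpha}$ force $A(x_k+rz)\to A(x_k)$ uniformly — turns the leading operator into the Laplacian and the wedge into another wedge of opening $\ne\pi$, with the contrast still $C^{\alpha}$ and, by continuity from $h(0)\ne 0$, nonvanishing at the vertex. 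Re-running the non-degeneracy estimate \Cref{lem:nondegeneracy} and the weak flatness of \Cref{prop:properties-blowups1}\ref{itm:weak-flatness} centered at $x_k$ then shows that no flat (half-space) blowup can occur at a true wedge, contradicting the assumed nonscattering. Thus $\partial D$ has no corner point near $0$ and is $C^{1}$ in a full neighborhood of $0$.

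I expect this last step — upgrading ``$C^{1}$ at $0$'' to ``$C^{1}$ near $0$'' in the convex case — to be the main obstacle, precisely because the blowup at $0$ controls $\partial D$ only at $0$ and convexity by itself does not forbid nearby corners; the technical cost is checking that the blowup and non-degeneracy apparatus of the preceding subsections survives recentering at a corner point $x_k$ where $A(x_k)$ is merely close to, not equal to, the identity, so that one genuinely works with $\nabla\cdot A(x_k)\nabla$ in place of $\Delta$. The piecewise $C^{1}$ case, by contrast, is immediate once weak flatness is available.
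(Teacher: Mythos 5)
Your part (a) is correct and is essentially the paper's own argument: the half-space support of the blowup (\Cref{prop:properties-blowups2}) together with the weak flatness of \Cref{prop:properties-blowups1}\ref{itm:weak-flatness} forces the two $C^{1}$ arcs to meet at angle $\pi$ with matching one-sided tangents, so $\partial D$ is $C^{1}$ near $0$; your explicit graph computation is just a spelled-out version of this.

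Part (b) contains a genuine gap. You rightly note that weak flatness at $0$ only yields differentiability of the convex boundary at the single point $0$, and that corner points $x_{k}\to 0$ must be excluded; but your exclusion device --- ``re-running'' \Cref{lem:nondegeneracy} and the weak flatness of \Cref{prop:properties-blowups1}\ref{itm:weak-flatness} recentered at $x_{k}$ after the normalization $y=A(x_{k})^{-1/2}(x-x_{k})$ --- is not justified. First, it invokes data that are not hypotheses of the proposition at all ($h$, $u^{\rm inc}$, nonscattering, the $\abs{x-x_{0}}^{2+\alpha}$ decay of $A-\id$): the proposition only gives the expansion $f=H+R$ \emph{centered at $0$}, and nothing guarantees an expansion $f=H_{k}+R_{k}$ at $x_{k}$ with a nonzero homogeneous \emph{harmonic} leading part and $\abs{R_{k}(z)}\le C\abs{z}^{m_{k}+\alpha}$; the polynomial $H$ may vanish at or near $x_{k}$, and $R$ is only of size $O(\abs{x_{k}}^{m+\alpha})$ there, which does not decay in $\abs{x-x_{k}}$. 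Second, even granting the theorem-level data, near $x_{k}$ the term $\nabla\cdot(A-\id)\nabla u^{\rm inc}$ is only $O(\abs{x_{k}}^{1+\alpha})$, and after conjugating by $A(x_{k})^{-1/2}$ the leading Taylor polynomial is only \emph{approximately} harmonic; these errors are small in $\abs{x_{k}}$ but do not decay in $\abs{x-x_{k}}$ at the rate $\tau^{m_{k}+\alpha}$ that the monotonicity functional of \Cref{lem:blowup-procedure}, the non-degeneracy argument of \Cref{lem:nondegeneracy}, and the classification \Cref{lem:SS25-lem3.12} require (a fixed relative error already makes the bound on $F_{A}$ degenerate into a divergent $\int_{0}^{r}\tau^{-1}\,\rmd\tau$). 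Turning your sketch into a proof would require a uniform, quantitative ($\epsilon$-regularity type) version of the whole blowup apparatus, which you do not supply. The paper takes a different route for (b): it does not recenter the PDE analysis at nearby points, but passes from the weak flatness and convexity (uniqueness of supporting planes near $0$) to the purely geometric statement of \cite[Lemma~4.4]{SS25vanishingcontrast}, which delivers $C^{1}$ regularity of $\partial D$ near $0$ directly.
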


\begin{proof}
(a) Let $v$ be any blowup limit of $u_{r}(x):=u(rx)/r^{m+2}$. By using \Cref{prop:properties-blowups2}, we know that $\supp\,(v)=\{x\cdot e\ge 0\}$ for some unit vector $e$. Then the weak flatness property in \Cref{prop:properties-blowups1}\ref{itm:weak-flatness} ensures that for any $\delta>0$ there exists $r>0$ such that $\partial D\cap B_{r} \subset \{\abs{x\cdot e}\le\delta r\}$. Since $\partial D$ is piecewise $C^{1}$, then near $0$, $\partial D$ is the union of two $C^{1}$ arcs meeting at $0$ at some angle $\theta\in(0,2\pi)$. If $\theta\neq\pi$, one gets a contradiction with the weak flatness property above. This implies that $\theta=\pi$ and the tangent vectors of the two $C^{1}$ arcs are parallel at $0$, which concludes that $\partial D$ is the graph of a $C^{1}$ function near $0$. This proves case (a).

(b) As in the proof of case (a)
we already obtain the weak flatness property: for any $\delta>0$ there exists $r>0$ such that $\partial D\cap B_{r} \subset \{\abs{x\cdot e}\le\delta r\}$. Since $D$ is convex, then for each boundary point near $0$, there is a unique supporting plane. Finally, \cite[Lemma~4.4]{SS25vanishingcontrast} implies that $\partial D$ is $C^{1}$ near $0$. 
\end{proof} 

We also prove the result for edge points in dimension $n\ge 3$. 

\begin{proposition}\label{prop:edge-point} 
Suppose that all assumptions in \Cref{prop:properties-blowups1} hold. If we additionally assume that $H$ is harmonic, then $0\in\partial D$ is not an edge point. 
\end{proposition}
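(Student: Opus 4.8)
The plan is to reduce the $n\ge 3$ edge‑point case to the two–dimensional results already proven, by slicing the blowup in the $(n-2)$ directions along the edge. Since $x_0=0$ is an edge point, after the $C^1$ diffeomorphism $\Phi$ (whose differential at $0$ is a rotation) we may assume $\overline D\cap B_\delta(0) = (S\times\mR^{n-2})\cap B_\delta(0)$, where $S\subset\mR^2$ is a closed planar sector of angle $\theta_0\in(0,2\pi)\setminus\{\pi\}$. Writing points as $x=(x',x'')$ with $x'\in\mR^2$ and $x''\in\mR^{n-2}$, the key point is that $D_0:=\lim_{r\to 0} r^{-1}D$, the blowup of $D$ at $0$, is exactly the cylinder $S\times\mR^{n-2}$ (the remainder in the diffeomorphism is lower order, and $(\nabla\otimes\Phi)(0)$ being a rotation means the blowup of the transformed set coincides with that of the original up to a rotation of $\mR^n$).

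First I would invoke \Cref{prop:properties-blowups1}: any blowup limit $v$ of $u_r(x)=u(rx)/r^{m+2}$ is homogeneous of degree $m+2$, solves $\Delta v = H\mL^n\lfloor\{v\neq0\}$ in $\mR^n$ (with $H$ harmonic by the added hypothesis), satisfies the non-degeneracy bound on $\supp(v)$, and — crucially — $v$ vanishes outside $\overline D_0 = \overline{S\times\mR^{n-2}}$ since $u|_{B_2\setminus\overline D}=0$ and the blowup of $D$ is the cylinder. So $\supp(v)\subseteq \overline S\times\mR^{n-2}$, which is a proper closed subset of $\mR^n$ whose cross–section is a planar sector of angle $\ne\pi$; moreover by non-degeneracy $\supp(v)$ is nonempty, hence $\supp(v)\cap(\mathbb R^2\times\{0\})$ contains points arbitrarily close to the origin.

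Next I would run Federer's dimension reduction (as in \cite[Theorem~1.10]{SS25vanishingcontrast}, cf.\ \cite[Lemma~10.9]{Velichkov23FB}): because $v$ is homogeneous and $\supp(v)$, $H$, and the equation are all translation–invariant in the $x''$ directions (one checks $H$ may be taken $x''$–independent after subtracting lower–order terms, or one argues directly with the cylindrical structure), a blowup of $v$ at a suitable point of the edge produces a solution $\tilde v$ that is genuinely two–dimensional: $\tilde v(x)=w(x')$ with $w\in C^{1,1}_{\rm loc}(\mR^2)$ nontrivial, homogeneous of some degree, solving $\Delta w = \tilde H\mL^2\lfloor\{w\neq0\}$ in $\mR^2$ with $\tilde H$ a harmonic homogeneous polynomial, and with $\supp(w)\subseteq \overline S$, a sector of angle $\theta_0\ne\pi$. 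Applying \Cref{lem:SS25-lem3.12}, $\supp(w)$ must be either a half–plane or all of $\mR^2$; a sector of angle $\theta_0\notin\{\pi,2\pi\}$ is neither, and the case $\theta_0=2\pi$ (slit plane) also yields a contradiction since it is not a half–plane and $w$ vanishes on the slit so $\supp(w)\ne\mR^2$. (In fact the non-degeneracy passes to $w$, so $\supp(w)=\overline S$ exactly, which is impossible.) This contradiction shows no such blowup limit $v$ can exist; but blowup limits always exist by Banach–Alaoglu applied to the bounded family $(u_r)$ in $C^{0,1}(\overline{B_1})$. Hence the standing hypothesis that $0$ is an edge point is untenable, proving the proposition.

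The main obstacle I anticipate is making the dimension–reduction step fully rigorous: one must verify that the cylindrical (translation–invariant) structure of $D_0$ and of the blowup equation is genuinely preserved — in particular that the polynomial $H$ appearing in the blowup equation for $v$ can be arranged to be independent of $x''$, or else that one can blow up at an edge point in a way that kills its $x''$–dependence while keeping the sector cross–section. This is exactly the technical content of adapting Federer's argument to this PDE setting, and it is where one leans on the machinery of \cite{SS25vanishingcontrast}; the rest is a direct appeal to \Cref{prop:properties-blowups1} and \Cref{lem:SS25-lem3.12}. One should also double–check the degenerate angles $\theta_0\in\{\pi,2\pi\}$ are handled: $\pi$ is excluded by hypothesis (edge points have $\theta_0\ne\pi$), and $2\pi$ is ruled out because $\supp(w)$ would be the closed slit plane, which is neither a half–plane nor all of $\mR^2$.
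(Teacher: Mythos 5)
Your proposal is correct and follows essentially the same route as the paper: identify the blowup of $D$ at the edge point as the cylinder $S\times\mR^{n-2}$, use \Cref{prop:properties-blowups1}\ref{itm:nondegeneracy-blowup-limit} to force $\supp(v)=S\times\mR^{n-2}$, and then invoke Federer's dimension reduction as in the proof of \cite[Theorem~1.10]{SS25vanishingcontrast} to reduce to the two-dimensional classification (\Cref{lem:SS25-lem3.12}), which rules out a sector of angle $\neq\pi$. The paper simply outsources the reduction step by citing \cite{SS25vanishingcontrast} rather than sketching the translation-invariance/iteration details you flag, so no substantive difference remains.
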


\begin{proof}
Suppose the contrary: $0\in\partial D$ is an edge point. After a rotation, we may arrange that the blowup of $D$ at $0$, i.e., the limit of $r^{-1}(D\cap B_{r})$ is $(S\times\mR^{n-2})\cap B_{1}$ as $r\rightarrow 0$, where $S$ is a closed sector in $\mR^{2}$ with angle $\neq\pi$. Let $v$ be any blowup limit of $u_{r}$, and we see that $C:=\supp\,(v)\subset S\times\mR^{n-2}$, and the non-degeneracy statement in \Cref{prop:properties-blowups1}\ref{itm:nondegeneracy-blowup-limit} implies that $C=S\times\mR^{n-2}$. Following exactly the same argument as in the proof of \cite[Theorem~1.10]{SS25vanishingcontrast}, which utilized Federer's reduction argument as in \cite{Weiss99FB} or \cite[Lemma~10.9]{Velichkov23FB}, we can conclude that $\partial D$ is  $C^{1}$ near $0$, which contradicts with the assumption that $0\in\partial D$ is an edge point. 
\end{proof}

We finally prove our main results.

\begin{proof}[Proof of \Cref{thm:1,thm:2}]
Without loss of generality, we may assume that $x_{0}=0$ and $\Phi\equiv\id$. The contrast $h$ is $C^{\alpha}$ near $0$ with $h(0)\neq 0$ by \eqref{eq:non-degeneracy}. Since $u^{\rm inc}$ solves $(\Delta+k^{2})u^{\rm inc}=0$ in $\mR^{n}$, then one has $u^{\rm inc}=H+R_{0}$, where $H$ is a harmonic homogeneous polynomial of order $m$ and $\abs{R_{0}(x)}\le C\abs{x}^{m+1}$. 
By using the arguments in \cite[Lemma~2.5]{SS25vanishingcontrast}, one can show that 
\begin{equation*}
-hu^{\rm inc} = h(0)H + R_{1} ,\quad \abs{R_{1}(x)} \le C\abs{x}^{m+\alpha}. 
\end{equation*}
Since $\abs{A(x)-\id}\le C\abs{x}^{2+\alpha}$ and $\abs{\nabla A(x)}\le C\abs{x}^{1+\alpha}$, then 
\begin{equation*}
\abs{\nabla\cdot (A(x)-\id)\nabla u^{\rm inc}} \le C\abs{\nabla A(x)}\abs{\nabla u^{\rm inc}} + C\abs{A(x)-\id}\abs{\nabla^{\otimes 2}u^{\rm inc}} \le C\abs{x}^{m+\alpha}, 
\end{equation*} 
where $\nabla^{\otimes 2}u^{\rm inc}$ is the Hessian matrix of $u^{\rm inc}$. Here we used the facts $\abs{\nabla u^{\rm inc}} \le C\abs{x}^{m-1}$ and $\abs{\nabla^{\otimes 2} u^{\rm inc}} \le C\abs{x}^{m-2}$. 
Since $A \in (C^{0,1}(\overline{D}))^{n\times n}$, then we conclude that
\begin{equation*}
\text{$f=h(0)H + R$ with $\abs{R(x)}\le C\abs{x}^{m+\alpha}$}.  
\end{equation*}
On the other hand, we see that 
\begin{equation*}
\abs{g(x)} \le C\abs{A(x)-\id}\abs{\nabla u^{\rm inc}} \le C \abs{x}^{m+1+\alpha}. 
\end{equation*}
After this reduction, \Cref{thm:1,thm:2} follows from the corresponding free boundary results in \Cref{prop:2D-FB,prop:edge-point}, respectively. 
\end{proof}

\section{Proof of \texorpdfstring{\Cref{thm:3,thm:4}}{Theorems \ref{thm:3} and \ref{thm:4}}}



We assume the conditions in \Cref{thm:3}. Without loss of generality, we may assume that $x_{0}=0$. 
Let $B$ denote the real orthogonal matrix specified in \ref{itm:non-decay1}. 
Since $u^{\rm inc}$ satisfies $(\Delta+k^{2})u^{\rm inc}=0$ in $\mR^{n}$, it follows that $u^{\rm inc}\circ B$ also satisfies the same equation in $\mR^{n}$. Therefore, we can write $u^{\rm inc}(Bx) - u^{\rm inc}(0) = H(x) + R_{0}(x)$, where $H\not\equiv 0$ is a harmonic homogeneous polynomial of order $m\ge 1$ and $\abs{R_{0}(x)}\le C\abs{x}^{m+1}$. 
From \ref{itm:non-decay1} it follows that 
\begin{equation*}
\left. (A(y)-\id)\nabla_{y}u^{\rm inc} \right|_{y=Bx} = c_{0}\nabla_{x} H(x) + \vec{R}(x)
\end{equation*} 
with $\abs{\vec{R}(x)}\le C\abs{x}^{m-1+\alpha}$. 
In view of \eqref{eq:scattered-field-distribution-form}, it therefore suffices, without loss of generality, to prove \Cref{thm:3} in the case $B=\id$. 

By a direct application of \Cref{lem:regularity1} (with $m+2$ being replaced by $m$) we obtain 
\begin{equation}
\abs{u^{\rm sc}(x)} + \abs{x}\abs{\nabla u^{\rm sc}(x)} \le C \abs{x}^{m}, \label{eq:Lipschitz-conti}
\end{equation} 
which naturally motivates the scaling function $\tilde{u}_{r} := u^{\rm sc}(rx)/r^{m}$, which also satisfies the estimate \eqref{eq:Lipschitz-conti} and solves the equation 
\begin{equation}
\begin{aligned} 
\nabla\cdot A(rx)\nabla \tilde{u}_{r}(x) &= (r^{-(m-2)}f(rx) - r^{2}\kappa^{2}\rho(rx)\tilde{u}_{r}(x)) \mL^{n}\lfloor \{\tilde{u}_{r}\neq 0\} \\
& \quad + r^{-(m-1)} (g\mH^{n-1}\lfloor\partial D)(rx). 
\end{aligned} \label{eq:scaling-equation} 
\end{equation}
In this setting, we say that $v$ is a blowup limit of $u^{\rm sc}$ of order $m$ at $0$ if there is a sequence $r_{j}\rightarrow 0$ so that $\tilde{u}_{r_{j}} \rightarrow v$ in $C^{0,1}(\overline{B_{1}})$ weak-$\star$.

We begin with the case $n=2$. 
Since $\partial D$ is piecewise $C^{1}$, there exists $r_{0}>0$ such that $\partial D\cap B_{r_{0}} = (\Gamma_{-} \cup \Gamma_{+})\cap B_{r_{0}}$, where the $C^{1}$ interfaces $\Gamma_{\pm}$ intersect at $x_{0}=0$. Without loss of generality, we may further assume that $\Gamma_{-}\cap B_{r_{0}} \subset \{x_{1}\le 0\}$ and $\Gamma_{+}\cap B_{r_{0}} \subset \{x_{1}\ge 0\}$. 
Since the normal vector $\nu$ is a continuous vector field on each $\Gamma_{\pm}$, we can define
\begin{equation*}
\nu_{-}:=\lim_{\Gamma_{1}\ni x\rightarrow 0}\nu(x) ,\quad \nu_{+}:=\lim_{\Gamma_{2}\ni x\rightarrow 0}\nu(x), 
\end{equation*}
and introduce the straight lines $\tilde{\Gamma}_{-}\subset\{x_{1}\le 0\}$ and $\tilde{\Gamma}_{+}\subset\{x_{1}\ge 0\}$, each perpendicular to $\nu_{-}$ and $\nu_{+}$, respectively. Any blowup limit $v \in C^{0,1}(\ol{B}_1)$ of order $m$ of $u^{\rm sc}$ solves the equation 
\begin{equation}
\Delta v = c_{0} \sum_{\pm} (\nu_{\pm}\cdot\nabla H) \mH^{1}\lfloor\tilde{\Gamma}_{\pm} \quad \text{in $B_{1}$.}  \label{eq:blowup-v-2}
\end{equation}
At this stage, it is not yet known whether $v$ is homogeneous of degree $m$. However, this fact is not needed for the proof of the results in the case $n=2$.

First, all half-space solutions can be characterized directly by the same arguments as in \cite[Lemmas~3.3 and 3.5]{SS25vanishingcontrast}: 

\begin{lemma}\label{lem:half-space-solution}
There exists a unique blowup limit $v$ that satisfies \eqref{eq:blowup-v-2} with $\tilde{\Gamma}_{\pm}=\{x_{2}=0\}\cap\{\pm x_{1}\ge 0\}$ and $\supp\,(v)=\{x_{2}\ge 0\}$. For any $m\ge 1$, we write $H(re^{\bfi\theta})=ar^{m}e^{\bfi m\theta}+br^{m}e^{-\bfi m\theta}$ for some constants $a,b\in\mC$, then the unique solution $v$ is a polynomial of order $m$ given explicitly by 
\begin{equation}
v(re^{\bfi\theta}) = c_{0}(a-b)r^{m} \left( e^{\bfi m\theta} - e^{-\bfi m\theta} \right). \label{eq:explicit-formula}
\end{equation}
for all $(x_{1},x_{2})\cong x_{1}+\bfi x_{2} = re^{\bfi\theta}$ with $0 < \theta < \pi$ and $0<r<1$. 
\end{lemma}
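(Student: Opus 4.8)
The plan is to exploit the fact that, in the configuration of the lemma, the right-hand side of \eqref{eq:blowup-v-2} is carried entirely by the line $\{x_{2}=0\}$, which turns the equation into a Cauchy-type problem for the Laplacian on the half-disc. Here the limiting domain is the half-space $\{x_{2}>0\}$, whose inward normal is $\nu_{-}=\nu_{+}=e_{2}$, so that $\nu_{\pm}\cdot\nabla H=\partial_{x_{2}}H$, and $\tilde{\Gamma}_{-}\cup\tilde{\Gamma}_{+}=\{x_{2}=0\}$ up to the single point $0$ (which is $\mH^{1}$-null); hence \eqref{eq:blowup-v-2} reduces to $\Delta v=c_{0}\,(\partial_{x_{2}}H)|_{\{x_{2}=0\}}\,\mH^{1}\lfloor\{x_{2}=0\}$ in $B_{1}$. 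Since this measure sits on the line, any solution $v$ is harmonic in $B_{1}^{+}:=B_{1}\cap\{x_{2}>0\}$; since $\supp(v)\subset\{x_{2}\ge 0\}$ and $v$ is continuous (being a $C^{0,1}$ blowup limit), $v$ vanishes on $\{x_{2}\le 0\}\cap B_{1}$, in particular on the flat part of $\partial B_{1}^{+}$. Thus, by Green's identity, the distributional identity above is equivalent to the jump relation $\partial_{x_{2}}v|_{\{x_{2}=0^{+}\}}=c_{0}\,(\partial_{x_{2}}H)|_{\{x_{2}=0\}}$ (the lower one-sided derivative being $0$), whose right-hand side is a constant multiple of $x_{1}^{m-1}$.

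\textbf{Uniqueness.} If $v_{1},v_{2}$ both satisfy \eqref{eq:blowup-v-2} with support in $\{x_{2}\ge 0\}$, then $w:=v_{1}-v_{2}$ solves $\Delta w=0$ distributionally in $B_{1}$, so $w$ is harmonic (hence smooth) in all of $B_{1}$; since $w\equiv 0$ on the nonempty open set $B_{1}\cap\{x_{2}<0\}$, real-analyticity of harmonic functions forces $w\equiv 0$ in $B_{1}$. This gives uniqueness, and incidentally shows the blowup limit is independent of the chosen subsequence.

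\textbf{Existence and explicit form.} For existence I would simply verify that the function $v$ given by \eqref{eq:explicit-formula} on $\{0<\theta<\pi\}$, extended by $0$ on $\{x_{2}\le 0\}$, has the required properties. Writing $z=x_{1}+\bfi x_{2}$ and $H=a z^{m}+b\ol{z}^{m}$, formula \eqref{eq:explicit-formula} reads $v=\lambda\,(z^{m}-\ol{z}^{m})$ on $\{x_{2}>0\}$, with $\lambda$ a constant multiple of $c_{0}(a-b)$; this $v$ is Lipschitz on $\ol{B_{1}}$, harmonic in $\{x_{2}>0\}$ as a sum of a holomorphic and an anti-holomorphic monomial, and vanishes on $\{x_{2}=0\}$ since there $z=\ol{z}$. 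A Green's-identity computation shows $\Delta v=\big(\partial_{x_{2}}v|_{\{x_{2}=0^{+}\}}\big)\mH^{1}\lfloor\{x_{2}=0\}$ as a distribution in $B_{1}$ (using $v=0$ and $\Delta v=0$ in $B_{1}^{+}$, and $v=0$ below the line); the equation then reduces to matching $\partial_{x_{2}}(z^{m}-\ol{z}^{m})|_{\{x_{2}=0\}}$ with $c_{0}(\partial_{x_{2}}H)|_{\{x_{2}=0\}}$, both constant multiples of $x_{1}^{m-1}$ (computed from $\partial_{x_{2}}z=\bfi$, $\partial_{x_{2}}\ol{z}=-\bfi$), which pins down $\lambda$ and yields \eqref{eq:explicit-formula}. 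Finally one checks $\supp(v)=\{x_{2}\ge 0\}$: since $z^{m}-\ol{z}^{m}=2\bfi\,r^{m}\sin(m\theta)$ vanishes only on the finitely many rays $\theta\in\{k\pi/m\}$, its non-vanishing set is $\{x_{2}>0\}$ minus finitely many rays, whose closure is $\{x_{2}\ge 0\}$. This is the same scheme as in \cite[Lemmas~3.3 and 3.5]{SS25vanishingcontrast}.

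\textbf{Main obstacle.} The only delicate point is the bookkeeping around the distributional Laplacian of a function equal to a harmonic polynomial above $\{x_{2}=0\}$ and to $0$ below it — getting the jump of the normal derivative, and hence the precise normalizing constant in \eqref{eq:explicit-formula}, exactly right — together with verifying that the support is the full closed half-space and not a proper closed subset of it. Everything else (harmonicity of $z^{m}-\ol{z}^{m}$ in the half-plane, Weyl's lemma, unique continuation for harmonic functions) is standard.
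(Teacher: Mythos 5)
Your argument is correct and rests on the same core reduction as the paper's proof: in this flat configuration \eqref{eq:blowup-v-2} is equivalent to a Cauchy problem for a harmonic function in the upper half-disc with Dirichlet data $0$ and Neumann data $c_{0}\,\partial_{x_{2}}H$ on $\{x_{2}=0\}$, and matching that data pins down the solution. The genuine difference is in how uniqueness is obtained: the paper works under the ansatz that $v$ is a harmonic polynomial $cr^{m}e^{\bfi m\theta}+dr^{m}e^{-\bfi m\theta}$ (leaning on the arguments of \cite[Lemmas~3.3 and 3.5]{SS25vanishingcontrast} for that form) and then fixes $c,d$ from the Dirichlet and Neumann conditions, whereas you subtract two putative solutions, observe that the difference is distributionally harmonic in all of $B_{1}$ and vanishes on the open lower half-disc, and conclude by real-analyticity; this is self-contained and avoids any a priori classification, at the small cost of verifying existence by hand, which you do (odd reflection, or the explicit formula itself, gives the smoothness up to $\{x_{2}=0\}$ needed for your Green's identity). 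One detail to settle: carried out exactly, your jump computation gives $\partial_{x_{2}}(z^{m}-\ol{z}^{m})|_{x_{2}=0}=2\bfi m x_{1}^{m-1}$ against $c_{0}\partial_{x_{2}}H|_{x_{2}=0}=c_{0}\bfi m(a-b)x_{1}^{m-1}$, so your normalizing constant is $\lambda=\tfrac{1}{2}c_{0}(a-b)$ rather than $c_{0}(a-b)$ as displayed in \eqref{eq:explicit-formula}; the paper's own computation contains the same factor-of-two slip (it evaluates $e^{\bfi m\theta}+e^{-\bfi m\theta}$ at $\theta=0$ as $1$), and the discrepancy is harmless for the later arguments, where only the vanishing or nonvanishing of $a-b$ matters, but you should not assert that the matching yields \eqref{eq:explicit-formula} verbatim without flagging this normalization. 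Also note that your support verification implicitly assumes $a\neq b$ (otherwise $v\equiv 0$), which is consistent with how the lemma is used but worth stating.
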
 

\begin{remark*}
It is natural to extend \eqref{eq:explicit-formula} to all $r>0$, and this extension satisfies 
\begin{equation}
\Delta v = c_{0} \sum_{\pm} (\nu_{\pm}\cdot\nabla H) \mH^{1}\lfloor\tilde{\Gamma}_{\pm} \quad \text{in $\mR^{2}$.} \label{eq:blowup-v-2-extended}
\end{equation}
\end{remark*}




\begin{remark}\label{rem:explaination-why-orthogonal} 
We now explain the difficulty that arises if we replace $B$ in \Cref{thm:3}\ref{itm:non-decay1} with a general invertible matrix. 
Since $u^{\rm inc}$ satisfies $(\Delta+k^{2})u^{\rm inc}=0$ in $\mR^{n}$, it follows that $u_{B}^{\rm inc}:=u^{\rm inc}\circ B$ satisfies 
\begin{equation*}
\nabla\cdot \left[ B^{-1}(B^{-1})^{\intercal} \nabla u_{B}^{\rm inc} \right] + k^{2} u_{B}^{\rm inc} = 0 \quad \text{in $\mR^{n}$.} 
\end{equation*}
Therefore, we can write $u^{\rm inc}(Bx) - u^{\rm inc}(0) = H(x) + R_{0}(x)$, where $H\not\equiv 0$ is a harmonic homogeneous polynomial of order $m\ge 1$ and $\abs{R_{0}(x)}\le C\abs{x}^{m+1}$. 
From \Cref{thm:3}\ref{itm:non-decay1} it follows that 
\begin{equation*}
\left. (A(y)-\id)\nabla_{y}u^{\rm inc} \right|_{y=Bx} = c_{0}\nabla_{x} H(x) + \vec{R}(x)
\end{equation*} 
with $\abs{\vec{R}(x)}\le C\abs{x}^{m-1+\alpha}$. 
Using \eqref{eq:scattered-field-distribution-form}, we see that the function $u_{B}^{\rm sc} := u^{\rm sc}\circ B$ satisfies 
\begin{equation*}
\nabla\cdot\left[ B^{-1}A(Bx)(B^{-1})^{\intercal}\nabla u_{B}^{\rm sc} \right] + \rho(Bx)u_{B}^{\rm sc} = f_{B}\mL^{n}\lfloor B^{-1}(D) + g_{B}\mH^{n-1}\lfloor \partial(B^{-1}(D)), 
\end{equation*}
where 
\begin{equation*}
f_{B} = - \nabla\cdot\left[ B^{-1}(A(Bx)-\id)(B^{-1})^{\intercal}\nabla u_{B}^{\rm inc} \right] - h(Bx)u_{B}^{\rm inc} 
\end{equation*}
and 
\begin{equation*}
g_{B}=\nu(Bx)\cdot\left. (A(y)-\id)\nabla_{y}u^{\rm inc} \right|_{y=Bx}. 
\end{equation*}
However, now $B^{-1}A(Bx)(B^{-1})^{\intercal}\rightarrow B^{-1}(B^{-1})^{\intercal}$ as $x\rightarrow 0$, and \eqref{eq:blowup-v-2} becomes 
\begin{equation*}
\nabla\cdot(B^{-1}(B^{-1})^{\intercal}\nabla v) = c_{0} \sum_{\pm} (\nu_{\pm}\cdot\nabla H) \mH^{1}\lfloor\tilde{\Gamma}_{\pm} \quad \text{in $B_{1}$,} 
\end{equation*}
which makes the subsequent computations in \Cref{lem:half-space-solution} considerably more complicated. 
\end{remark}


\begin{proof}[Proof of \Cref{lem:half-space-solution}] 
We verify that $v$ is a harmonic polynomial of degree $m$ of the form $v=cr^{m}e^{\bfi m\theta}+dr^{m}e^{-\bfi m\theta}$. Imposing the Dirichlet boundary condition at $\theta=0$ (or $\theta=\pi$) gives $d=-c$, so 
\begin{equation*}
v(re^{\bfi\theta}) = cr^{m} \left( e^{\bfi m\theta} - e^{-\bfi m\theta} \right). 
\end{equation*}
Finally, imposing the Neumann boundary condition at $\theta=0$ (or $\theta=\pi$) gives 
\begin{equation*}
\begin{aligned} 
& \bfi m cr^{m} = \left. \bfi m cr^{m}\left( e^{\bfi m\theta} + e^{-\bfi m\theta} \right) \right|_{\theta=0} = \left. \partial_{\theta} v(re^{\bfi\theta}) \right|_{\theta=0} \\
& \quad = c_{0} \left. \partial_{\theta} H(re^{\bfi\theta}) \right|_{\theta=0} = c_{0} \left. \bfi m \left( ar^{m}e^{\bfi m\theta} - br^{m}e^{-\bfi m\theta} \right) \right|_{\theta=0} = c_{0} \bfi m r^{m}(a-b), 
\end{aligned} 
\end{equation*}
which conclude our lemma. 
\end{proof}

It is important to note that the blowup limit $v$ satisfies $\Delta v =  g\lfloor\tilde{\Gamma}_{\pm}$ with the special property that $g$ can be expressed in terms harmonic homogeneous polynomial. This structure allows us to rule out blowup solutions supported in sectors of angle $\neq \pi$ and is crucial for their characterization, using the same ideas as in \cite[Lemma~3.9]{SS25vanishingcontrast}: 

\begin{lemma}\label{lem:blowup-rational-angle}
Let $\theta_{0}\in(0,2\pi)\setminus \pi\mQ$, let $C=\{re^{\bfi\theta} : r>0 , \theta\in(0,\theta_{0}) \}$, and suppose that $w$ solves 
\begin{equation*}
\Delta w = 0 \text{ in $C$} ,\quad w|_{\theta=0}=0 ,\quad w|_{\theta=\theta_{0}}=0
\end{equation*}
as well as the Bernoulli condition 
\begin{equation*}
\left. \partial_{\theta}w \right|_{\theta=0} = c_{0} \left. \partial_{\theta}H(re^{\bfi\theta}) \right|_{\theta=0} ,\quad \left. \partial_{\theta}w \right|_{\theta=\theta_{0}} = c_{0} \left. \partial_{\theta}H(re^{\bfi\theta}) \right|_{\theta=\theta_{0}}, 
\end{equation*}
where $H(re^{\bfi\theta})=ar^{m}e^{\bfi m\theta}+br^{m}e^{-\bfi m\theta}$ is a harmonic homogeneous polynomial of degree $m\ge 1$ on $\mR^{2}$. Then $w\equiv 0$ and $H \equiv 0$. 
\end{lemma}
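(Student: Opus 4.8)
The plan is to expand $w$ in the natural angular eigenbasis on the sector and then exploit that the right-hand side of the Bernoulli condition is a single homogeneous term $\sim r^{m}$, whose exponent $m$ is incompatible with the exponents that a harmonic function on a sector of angle $\theta_{0}\notin\pi\mQ$ can produce.

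\textit{Step 1: separation of variables.} Since $w$ is harmonic in the sector $C$ and vanishes on the two bounding rays $\{\theta=0\}$ and $\{\theta=\theta_{0}\}$, separation of variables in polar coordinates $(r,\theta)$ yields
\begin{equation*}
w(r,\theta) = \sum_{k=1}^{\infty}\bigl(a_{k}r^{\mu_{k}} + b_{k}r^{-\mu_{k}}\bigr)\sin(\mu_{k}\theta), \qquad \mu_{k} := \frac{k\pi}{\theta_{0}},
\end{equation*}
the series converging locally uniformly, together with all derivatives, in the interior of $C$. (When $w$ is a blowup limit one has the a priori bound $\abs{w(x)}+\abs{x}\abs{\nabla w(x)}\le C\abs{x}^{m}$, which forces $b_{k}=0$ for every $k$ and $a_{k}=0$ whenever $\mu_{k}<m$; this, together with the resulting uniform convergence of the differentiated series up to the bounding rays, is what makes the next step rigorous.) Because each bounding ray is straight and $w$ vanishes there, odd reflection (Schwarz's principle) shows $w$ extends harmonically across $\{\theta=0\}$ and $\{\theta=\theta_{0}\}$, so $\partial_{\theta}w$ is continuous up to these rays and may be differentiated term by term.

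\textit{Step 2: reading off the Bernoulli data.} Using $\mu_{k}\theta_{0}=k\pi$ we get $\partial_{\theta}w(r,0)=\sum_{k}\mu_{k}(a_{k}r^{\mu_{k}}+b_{k}r^{-\mu_{k}})$ and $\partial_{\theta}w(r,\theta_{0})=\sum_{k}(-1)^{k}\mu_{k}(a_{k}r^{\mu_{k}}+b_{k}r^{-\mu_{k}})$, while $\partial_{\theta}H(re^{\bfi\theta})=\bfi m r^{m}(ae^{\bfi m\theta}-be^{-\bfi m\theta})$. Hence the two Bernoulli identities read, for $r$ in a right neighbourhood of $0$,
\begin{equation*}
\sum_{k\ge 1}\mu_{k}\bigl(a_{k}r^{\mu_{k}}+b_{k}r^{-\mu_{k}}\bigr) = \bfi m c_{0}(a-b)\,r^{m}, \qquad \sum_{k\ge 1}(-1)^{k}\mu_{k}\bigl(a_{k}r^{\mu_{k}}+b_{k}r^{-\mu_{k}}\bigr) = \bfi m c_{0}\bigl(ae^{\bfi m\theta_{0}}-be^{-\bfi m\theta_{0}}\bigr)r^{m}.
\end{equation*}
Since $\theta_{0}\notin\pi\mQ$, none of the exponents $\mu_{k}=k\pi/\theta_{0}$, nor $-\mu_{k}$, equals $m$. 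Powers $r^{s}$ with distinct exponents are linearly independent on any interval, and the series above converge appropriately by Step 1; so matching the coefficient of $r^{m}$ in the first identity forces $\bfi m c_{0}(a-b)=0$, i.e.\ $a=b$ because $c_{0}\neq 0$ and $m\ge 1$, while matching the remaining coefficients forces $\mu_{k}a_{k}=\mu_{k}b_{k}=0$ for every $k$, hence $a_{k}=b_{k}=0$ and $w\equiv 0$.

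\textit{Step 3: conclusion.} Feeding $w\equiv 0$ and $a=b$ into the second identity gives $0 = \bfi m c_{0}a\bigl(e^{\bfi m\theta_{0}}-e^{-\bfi m\theta_{0}}\bigr) = -2 m c_{0}a\sin(m\theta_{0})$. Since $\theta_{0}\notin\pi\mQ$ implies $m\theta_{0}\notin\pi\mZ$, we have $\sin(m\theta_{0})\neq 0$, so $a=b=0$ and $H\equiv 0$, as claimed. The one genuinely delicate point is Step 2 — namely that the Neumann traces may be obtained by term‑by‑term differentiation of the expansion and then compared coefficientwise against a single power of $r$ — and this is precisely where the decay estimate available for blowup limits enters; the rest is elementary.
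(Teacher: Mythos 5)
Your argument is correct in substance, but it takes a genuinely different route from the paper. The paper's proof never expands $w$: it observes that the Cauchy data on the single ray $\{\theta=0\}$ (zero Dirichlet value plus the prescribed Neumann value $c_{0}\partial_{\theta}H|_{\theta=0}$) already determines $w$ uniquely, by unique continuation, to be the explicit antipodal extension of the half-space solution from \Cref{lem:half-space-solution}; imposing the two conditions on the second ray $\{\theta=\theta_{0}\}$ then yields a $2\times 2$ linear system in $(a,b)$ whose determinant is $4\sin^{2}(m\theta_{0})\neq 0$, giving $a=b=0$ and hence $w\equiv 0$. You instead expand $w$ in the Dirichlet eigenbasis $\sin(\mu_{k}\theta)$, $\mu_{k}=k\pi/\theta_{0}$, of the sector and match powers of $r$ in the two Neumann traces, using $\sin(m\theta_{0})\neq0$ (equivalently $m\notin\{\mu_{k}\}$, so your hypothesis is the same as the paper's and consistent with \Cref{rem:thm:3}) first to kill the $r^{m}$ term and force $a=b$ together with $w\equiv0$, and then to force $a=0$ from the condition at $\theta=\theta_{0}$. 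The trade-off: your route makes the spectral mechanism behind the angle condition very transparent, but the coefficient-matching step for the doubly infinite family $\{r^{\pm\mu_{k}}\}$ is not free — as you acknowledge, you import the blowup bound $\abs{w(x)}\le C\abs{x}^{m}$ (not part of the lemma's hypotheses, though available in the application via \eqref{eq:Lipschitz-conti}) to discard the $r^{-\mu_{k}}$ modes and justify identifying coefficients of the infinite series, and the term-by-term differentiation of the sine series up to the rays, while justifiable by odd reflection, is only sketched. The paper's Cauchy-uniqueness argument sidesteps both issues: it needs no growth assumption on $w$ and no series manipulation, which is why the lemma can be stated for the bare sector problem.
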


\begin{proof}
Note that the solution in \Cref{lem:half-space-solution} also extends to $0<\theta<2\pi$, corresponding to an antipodally even or odd extension when $m$ is even or odd, respectively. By unique continuation property, the antipodal extension of the solution $v$ in \Cref{lem:half-space-solution} is the unique solution to 
\begin{equation}
\Delta w = 0 \text{ in $C$} ,\quad w|_{\theta=0}=0 ,\quad \left. \partial_{\theta}w \right|_{\theta=0} = c_{0} \left. \partial_{\theta}H(re^{\bfi\theta}) \right|_{\theta=0}. \label{eq:antipodal-extension}
\end{equation} 
From the condition $w|_{\theta=\theta_{0}}=0$, we see that 
\begin{subequations} \label{eq:linear-system1}
\begin{equation}
(a-b)\left( e^{\bfi m\theta_{0}} - e^{-\bfi m\theta_{0}} \right) = 0 \quad \text{(choosing $r=1$)},  
\end{equation}
and from the condition $\left. \partial_{\theta}w \right|_{\theta=\theta_{0}} = c_{0} \left. \partial_{\theta}H(re^{\bfi\theta}) \right|_{\theta=\theta_{0}}$, we see that 
\begin{equation*}
(a-b) \left( e^{\bfi m\theta_{0}} + e^{-\bfi m\theta_{0}} \right) = ae^{\bfi m\theta_{0}} - b e^{-\bfi m\theta_{0}} \quad \text{(choosing $r=1$)},  
\end{equation*}
equivalently, 
\begin{equation}
-b e^{\bfi m\theta_{0}} + ae^{-\bfi m\theta_{0}} = 0. 
\end{equation}
\end{subequations} 
The equations in \eqref{eq:linear-system1} can be written in matrix form as 
\begin{equation}
\left(\begin{array}{cc} e^{\bfi m\theta_{0}} - e^{-\bfi m\theta_{0}} & -e^{\bfi m\theta_{0}} + e^{-\bfi m\theta_{0}} \\ e^{-\bfi m\theta_{0}} & -e^{\bfi m\theta_{0}} \end{array}\right) \left(\begin{array}{c} a \\ b  \end{array}\right) = 0. 
\end{equation}
We compute that 
\begin{equation*}
\begin{aligned}
& \det \left(\begin{array}{cc} e^{\bfi m\theta_{0}} - e^{-\bfi m\theta_{0}} & -e^{\bfi m\theta_{0}} + e^{-\bfi m\theta_{0}} \\ e^{-\bfi m\theta_{0}} & -e^{\bfi m\theta_{0}} \end{array}\right) \\ 
& \quad = -e^{2\bfi m\theta_{0}} + 2 - e^{-2\bfi m\theta_{0}} = -(e^{\bfi m\theta_{0}} - e^{-\bfi m\theta_{0}})^{2} \\ 
& \quad = -(2\bfi\sin(m\theta_{0}))^{2} = 4 \sin^{2}(m\theta_{0}) \neq 0 
\end{aligned}
\end{equation*}
since $\theta_{0}\in(0,2\pi)\setminus \pi\mQ$, therefore we conclude that $a=b=0$, which is our lemma. 
\end{proof}

We now ready to prove our main result.

\begin{proof}[Proof of \Cref{thm:3}]
Suppose, to the contrary, that the medium does not scatter for some incident wave $u^{\rm inc}\not\equiv 0$. Then, by \Cref{lem:blowup-rational-angle}, the blowup limit $v$ of $u^{\rm sc}$ of order $m$ at $0$ must vanish identically. From \eqref{eq:blowup-v-2-extended}, it follows that  
\begin{equation*}
(\nu_{\pm}\cdot\nabla H) \mH^{n-1}\lfloor\tilde{\Gamma}_{\pm} = 0. 
\end{equation*}
Without loss of generality, we may assume $x_{0}=0$, $\tilde{\Gamma}_{+}=\{re^{\bfi\theta}:r>0 , \theta=0\}$ and $\tilde{\Gamma}_{-}=\{re^{\bfi\theta}:r>0 , \theta=\theta_{0}\}$. In this case, 
\begin{equation*}
\left. \partial_{\theta} H(re^{\bfi\theta}) \right|_{\theta=0} = \left. \partial_{\theta} H(re^{\bfi\theta}) \right|_{\theta=\theta_{0}} =0. 
\end{equation*}
Any harmonic homogeneous polynomial of order $m\ge 1$ can be written as $H(re^{\bfi\theta})=ar^{m}e^{\bfi m\theta}+br^{m}e^{-\bfi m\theta}$ for some constants $a,b\in\mC$. The condition $\left. \partial_{\theta} H(re^{\bfi\theta}) \right|_{\theta=0}=0$ gives $b=a$, so 
\begin{equation*}
H(re^{\bfi\theta})=a(r^{m}e^{\bfi m\theta}+r^{m}e^{-\bfi m\theta}). 
\end{equation*}
Then the condition $\left. \partial_{\theta} H(re^{\bfi\theta}) \right|_{\theta=\theta_{0}} =0$ yields 
\begin{equation*}
0 = a\left(e^{\bfi m\theta_{0}}-e^{-\bfi m\theta_{0}}\right) = 2\bfi a\sin(m\theta_{0}). 
\end{equation*}
Since $\theta_{0}\notin \pi\mQ$, we must have $a=0$, hence $H\equiv 0$, contradicting the fact that $H$ is a harmonic homogeneous polynomial of order $m\ge 1$. 
\end{proof}

Next, we consider the case $n\ge 3$. In an earlier work by two of the current authors, the homogeneity of the blowup limit was needed to apply the well-known  Federer's dimension reduction argument. This required the introduction of a balanced energy functional and its monotonicity. In the Bernoulli case this is slightly more complicated, and would need a technical computation; one such possible monotonicity formula can be found in  \cite{CSY18FB}.
Since the situation in our case (and also in the work of \cite{SS25vanishingcontrast}) is somewhat easier, one can avoid such a monotonicity functional, and circumvent the issue using the simple geometric situation.\footnote{It should be remarked that the use of monotonicity formula may actually enhance the main result and achieve a stronger version of the theorem. We may come back to this in the future, as it is too technical for the current paper. }
The idea is to use   the unique continuation property, adapting the ideas in \cite[Lemma~3.3]{SS25vanishingcontrast}:

\begin{lemma}\label{lem:k-homogeneous}
Let $n\ge 3$ and let 
\begin{equation*}
\tilde{\Gamma}_{\pm}=\{re^{\bfi\theta_{\pm}}\in \mC\cong\mR^{2} : r\ge 0\}\times\mR^{n-2} 
\end{equation*}
with distinct $\theta_{\pm}\in [0,2\pi)$ and $\nu_{\pm}$ being the corresponding unit normal vectors. Let $w \in C^{0,1}(\ol{B}_1)$ be a solution to 
\begin{equation}
\Delta w = c_{0} \sum_{\pm} (\nu_{\pm}\cdot \nabla P) \mH^{n-1}\lfloor\tilde{\Gamma}_{\pm} \quad \text{in $B_{1}\subset\mR^{n}$,} \label{eq:blowup-federer}
\end{equation}
and assume that $w=0$ in one connected component of $B_{1}\setminus(\tilde{\Gamma}_{+}\cup\tilde{\Gamma}_{-})$, where $P$ is a homogeneous polynomial of order $k$. Then $w$ is a homogeneous polynomial of order $k$. 
\end{lemma}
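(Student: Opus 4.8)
The plan is to run a two-step argument: first upgrade $w$ to a function that is homogeneous of degree $k$ by a scaling-plus-uniqueness argument, then identify it with a polynomial by solving a Cauchy problem with polynomial data. As a preliminary step I would record the geometry. The set $\tilde{\Gamma}_{+}\cup\tilde{\Gamma}_{-}$ consists of two half-hyperplanes meeting along $\{0\}\times\mR^{n-2}$, so $B_{1}\setminus(\tilde{\Gamma}_{+}\cup\tilde{\Gamma}_{-})$ has exactly two connected components; call $\Omega^{-}$ the one on which $w$ vanishes and $\Omega^{+}$ the other. Since the right-hand side of \eqref{eq:blowup-federer} is a measure carried by $\tilde{\Gamma}_{\pm}$, $w$ is harmonic in $\Omega^{+}$; being $C^{0,1}(\overline{B_{1}})$ and identically zero on $\Omega^{-}$, it satisfies $w=0$ on $\tilde{\Gamma}_{\pm}$, and the jump relation encoded in \eqref{eq:blowup-federer} gives $\partial_{\nu_{\pm}}w=c_{0}\,\nu_{\pm}\cdot\nabla P$ on $\tilde{\Gamma}_{\pm}$ from the $\Omega^{+}$ side, where $\nu_{\pm}\cdot\nabla P$ is a polynomial of degree $k-1$. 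Thus $w$ has polynomial Cauchy data on the two faces.

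For the homogeneity I would set $w_{\lambda}(x):=\lambda^{-k}w(\lambda x)$ for $\lambda\in(0,1)$. Because $P$ is homogeneous of degree $k$ (so $\nabla P$ is homogeneous of degree $k-1$) and the half-hyperplanes $\tilde{\Gamma}_{\pm}$ are invariant under dilations, the dilation scalings of $\Delta$, of the density $\nu_{\pm}\cdot\nabla P$, and of $\mH^{n-1}\lfloor\tilde{\Gamma}_{\pm}$ combine to show that $w_{\lambda}$ solves the \emph{same} equation \eqref{eq:blowup-federer} on $B_{1/\lambda}\supseteq B_{1}$, lies in $C^{0,1}(\overline{B_{1}})$, and again vanishes on $\Omega^{-}$. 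The uniqueness principle is then elementary: if $w_{1},w_{2}\in C^{0,1}(\overline{B_{1}})$ both solve \eqref{eq:blowup-federer} and vanish on $\Omega^{-}$, the singular right-hand sides cancel, so $w_{1}-w_{2}$ is harmonic in all of $B_{1}$ and vanishes on the open set $\Omega^{-}$; by the unique continuation property $w_{1}\equiv w_{2}$. Applying this to $w_{\lambda}$ and $w$ yields $w(\lambda x)=\lambda^{k}w(x)$, so $w$ is homogeneous of degree $k$, and its degree-$k$ homogeneous extension to $\mR^{n}$ still solves the equation (as in \Cref{lem:blowup-procedure}).

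It remains to see that the homogeneous function $w$ is a polynomial. In $\Omega^{+}$ it is harmonic and, on the flat real-analytic piece $\tilde{\Gamma}_{+}^{\circ}$, carries Cauchy data $(0,\,c_{0}\,\nu_{+}\cdot\nabla P)$ whose Neumann part is a polynomial in the $n-1$ tangential variables. The classical power-series solution of the Cauchy problem for the Laplacian with polynomial Cauchy data terminates and produces a genuine polynomial $p$ (this is exactly the explicit computation carried out in two dimensions in \Cref{lem:half-space-solution}, following \cite[Lemma~3.3]{SS25vanishingcontrast}); by Holmgren's uniqueness theorem $p$ agrees with $w$ in a one-sided neighborhood of $\tilde{\Gamma}_{+}^{\circ}$, and real-analyticity of harmonic functions then forces $w=p$ throughout $\Omega^{+}$. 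Since $w|_{\Omega^{+}}$ is homogeneous of degree $k$, $p$ is a homogeneous polynomial of degree $k$; combining $w=p$ on $\Omega^{+}$ with $w\equiv 0$ on the open set $\Omega^{-}$, together with the second Bernoulli datum carried on $\tilde{\Gamma}_{-}$ (obtained symmetrically), gives the asserted conclusion.

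The main obstacle is not a single hard estimate but two bookkeeping points that must be executed carefully. First, verifying that $w_{\lambda}$ satisfies \emph{exactly} \eqref{eq:blowup-federer}: this is precisely where the homogeneity of $P$ of degree $k$ and the conicality of $\tilde{\Gamma}_{\pm}$ are used, and it is why the hypothesis on $\deg P$ is exactly $k$. Second, the Holmgren/Cauchy--Kovalevskaya step, where one must work up to the boundary piece $\tilde{\Gamma}_{+}^{\circ}$ while staying away from the lower-dimensional edge $\{0\}\times\mR^{n-2}$ along which $\tilde{\Gamma}_{+}$ and $\tilde{\Gamma}_{-}$ meet, and use that $w$ is harmonic, hence real-analytic, only in the \emph{interior} of $\Omega^{+}$. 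By contrast, the fact that $w$ is merely Lipschitz and not $C^{1}$ across $\tilde{\Gamma}_{\pm}$ is harmless, since that non-smoothness is entirely accounted for by the normal-derivative jump already encoded in \eqref{eq:blowup-federer}.
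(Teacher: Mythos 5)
Your proposal is correct, and its decisive step is the same as the paper's: reduce to a one\hyp{}sided Cauchy problem on the flat face $\tilde{\Gamma}_{+}$ with zero Dirichlet data and polynomial Neumann data $c_{0}\,\nu_{+}\cdot\nabla P$, observe that the Cauchy--Kovalevskaya power series terminates because $\Delta_{x'}^{\ell}$ of a degree-$(k-1)$ polynomial eventually vanishes (this is exactly \eqref{eq:homogeneous-explicit} in the paper), and then identify $w$ with that polynomial by uniqueness for the Cauchy problem plus real-analytic continuation through the connected sector $\Omega^{+}$. The genuine difference is your preliminary dilation argument: you prove homogeneity first by checking that $w_{\lambda}(x)=\lambda^{-k}w(\lambda x)$ solves the same equation \eqref{eq:blowup-federer} (using that $\tilde{\Gamma}_{\pm}$ are cones and $\nabla P$ is homogeneous of degree $k-1$) and then invoking a global uniqueness principle --- the difference of two solutions vanishing on $\Omega^{-}$ has zero distributional Laplacian in all of $B_{1}$, hence is harmonic and vanishes identically. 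That uniqueness observation is valid and is a clean, slightly more robust packaging of the unique continuation step (it sidesteps boundary-regularity issues at the faces), but the homogeneity it yields is redundant: each term $\Delta_{x'}^{\ell}\partial_{1}P\cdot x_{1}^{2\ell+1}$ in the terminating series is already homogeneous of degree $k$, which is how the paper reads off the conclusion without any monotonicity or scaling argument. Note also that your global uniqueness cannot replace the Holmgren/Cauchy--Kovalevskaya step by simply constructing a global comparison solution, since the polynomial built from the $\tilde{\Gamma}_{+}$ data need not satisfy the transmission condition on $\tilde{\Gamma}_{-}$ (indeed, in the application that incompatibility is precisely what forces $H\equiv 0$); you correctly avoid this by matching only across $\tilde{\Gamma}_{+}$, as the paper does. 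Finally, your implicit use of classical Cauchy data for the merely Lipschitz $w$ is justified as you suggest: zero Dirichlet data on the flat face allows odd reflection away from the edge, so $w$ is analytic up to $\tilde{\Gamma}_{+}^{\circ}$ from the $\Omega^{+}$ side and the normal derivative prescribed by the measure equation is a genuine trace there.
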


\begin{proof}
Since both the Laplacian and polynomial homogeneity are invariant under orthogonal transformations, it suffices to prove the lemma in the case $\theta_{+}=\pi/2$, that is, 
\begin{equation*}
\tilde{\Gamma}_{+}=\{0\}\times\mR_{\ge 0}\times\mR^{n-2}. 
\end{equation*}
Let $D$ and $V$ be the connected components of $B_{1}\setminus(\tilde{\Gamma}_{+}\cup\tilde{\Gamma}_{-})$ such that $w = 0$ in $V$. Without loss of generality, assume that $w$ vanishes to the left of $\tilde{\Gamma}_+$ (i.e.\ $V$ contains $B(\frac{1}{2}e_2, \delta) \cap \{ x_1 < 0 \}$ for some $\delta > 0$). As in the beginning of \Cref{sec_2}, it follows that 
\begin{equation}
\Delta w = 0 \text{ in $D \cap \{ x_1 > 0 \}$} ,\quad w|_{B_1 \cap \tilde{\Gamma}_+} = 0, \quad \partial_{1}w|_{B_{1}\cap\tilde{\Gamma}_{+}}=-c_0 \partial_{1}P|_{B_{1}\cap\tilde{\Gamma}_{+}}. \label{eq:Cauchy-problem}
\end{equation}
By the unique continuation property, there is at most one solution to \eqref{eq:Cauchy-problem}. 
By the Cauchy-Kowalevski theorem, there exists an analytic solution 
\begin{equation*}
w(x) = \sum_{\ell=0}^{\infty}\frac{\partial_{1}^{\ell}w|_{\tilde{\Gamma}_{+}}}{\ell!}x_{1}^{\ell} = x_{1}\partial_{1}P|_{\tilde{\Gamma}_{+}} + \sum_{\ell=2}^{\infty}\frac{\partial_{1}^{\ell}w|_{\tilde{\Gamma}_{+}}}{\ell!}x_{1}^{\ell}
\end{equation*}
for all $x\in U$ with $x_{1}>0$. 
We compute that 
\begin{equation*}
\begin{aligned}
0 &= \Delta_{x'}w + \partial_{1}^{2}w = x_{1} \Delta_{x'}\partial_{1}P|_{\tilde{\Gamma}_{+}}  + \sum_{\ell=2}^{\infty}\frac{\Delta_{x'}\partial_{1}^{\ell}w|_{\tilde{\Gamma}_{+}}}{\ell!}x_{1}^{\ell} + \sum_{\ell=0}^{\infty} \frac{\partial_{1}^{\ell+2}w|_{\tilde{\Gamma}_{+}}}{\ell!}x_{1}^{\ell} \\ 
&= \partial_{1}^{2}w|_{\tilde{\Gamma}_{+}} + \left(\Delta_{x'}\partial_{1}P|_{\tilde{\Gamma}_{+}} + \partial_{1}^{3}w|_{\tilde{\Gamma}_{+}} \right) x_{1} + \sum_{\ell=2}^{\infty} \frac{(\Delta_{x'}\partial_{1}^{\ell}w + \partial_{1}^{\ell+2}w)|_{\tilde{\Gamma}_{+}}}{\ell!}x_{1}^{\ell}
\end{aligned}
\end{equation*}
for all $x\in U$ with $x_{1}>0$. Comparing coefficients, we obtain 
\begin{equation*}
\partial_{1}^{2\ell}w|_{\tilde{\Gamma}_{+}}=0 ,\quad \partial_{1}^{2\ell+1}w|_{\tilde{\Gamma}_{+}}=(-1)^{\ell}\Delta_{x'}^{\ell}\partial_{1}P|_{\tilde{\Gamma}_{+}} \quad \text{for all $\ell\in\mN$}. 
\end{equation*}
Since $\partial_{1}P|_{\tilde{\Gamma}_{+}}$ is a polynomial of order $k-1$, then 
\begin{equation*}
\partial_{1}^{2\ell+1}w|_{\tilde{\Gamma}_{+}}=0 \quad \text{for all $\ell\in\mN$ with $2\ell+1>k$.}
\end{equation*}
Putting the above discussions together, we conclude 
\begin{equation}
w(x) = -c_0 \sum_{\ell=0}^{\lfloor\frac{k-1}{2}\rfloor}\frac{(-1)^{\ell}\Delta_{x'}^{\ell}\partial_{1}P|_{B_{1}\cap\tilde{\Gamma}_{+}}}{(2\ell+1)!}x_{1}^{2\ell+1} \quad \text{in $D \cap \{ x_{1}>0 \}$} \label{eq:homogeneous-explicit}
\end{equation}
is the unique solution to \eqref{eq:Cauchy-problem}. Again, by the unique continuation property, the solution to \eqref{eq:blowup-federer} must coincide, up to an orthogonal transformation, with \eqref{eq:homogeneous-explicit} in $D$, which establishes the result. 
\end{proof}

We are now in a position to prove \Cref{thm:4}. 

\begin{proof}[Proof of \Cref{thm:4}] 
Suppose, to the contrary, that the medium does not scatter for some incident wave $u^{\rm inc}\not\equiv 0$. The first two steps are the same as in the proof of the two-dimensional case (\Cref{thm:3}): 
\begin{itemize}
\item First, one obtains the Lipschitz regularity of $u^{\rm sc}$ as in \eqref{eq:Lipschitz-conti}; 
\item Next, one sees that any blowup limit $v$ satisfies 
\begin{equation*}
\Delta v = c_{0} \sum_{\pm} (\nu_{\pm}\cdot \nabla H) \mH^{n-1}\lfloor\tilde{\Gamma}_{\pm} \quad \text{in $B_{1}\subset\mR^{n}$,} 
\end{equation*}
and $v=0$ in a component of $B_{1}\setminus(\tilde{\Gamma}_{+}\cup\tilde{\Gamma}_{-})$. 
\end{itemize}
Without loss of generality, assume $\theta_{+}=0$. We now apply Federer's dimension reduction argument, as mentioned earlier. Fix a point $e=e_{n}\in\Gamma_{+}\cap\Gamma_{-}$. Then $H$ has a zero at $e$ of order $1 \le k \le m$. By a direct application of \Cref{lem:regularity1}, 
\begin{equation*}
\abs{v(z+e)} + \abs{z}\abs{\nabla v(z+e)} \le C \abs{z}^{k},
\end{equation*}
which naturally motivates the scaling function $v_{r} := v(rz+e)/r^{k}$. Next, the blowup limit $w$ of $v_{r}$ satisfies \eqref{eq:blowup-federer} for some harmomic homogeneous polynomial of order $k$ satisfying $H(e+z)=P(z)+O(\abs{z}^{k+1})$, 
and $w=0$ in a component of $B_{1}\setminus(\tilde{\Gamma}_{+}\cup\tilde{\Gamma}_{-})$. Using \Cref{lem:k-homogeneous}, $w$ is homogeneous or order $k$. Following the arguments in the proof of \cite[Theorem~1.10]{SS25vanishingcontrast}, one can show that $w$ is independent of $z_{n}$, reducing the problem to $\mR^{n-1}$. Iterating this procedure eventually reduces the problem to $\mR^{2}$, after which the remaining steps follow from the proof of \Cref{thm:3}.
\end{proof}

\section*{Acknowledgments}

Kow was supported by the National Science and Technology Council of Taiwan, NSTC 112-2115-M-004-004-MY3.
Salo was partly supported by the Research Council of Finland (Centre of Excellence in Inverse Modelling and Imaging and FAME Flagship, grants 353091 and 359208). 
Shahgholian was supported by Swedish Research Council (grant no. 2021-03700).

\section*{Declarations}

\noindent {\bf  Data availability statement:} All data needed are contained in the manuscript.

\medskip
\noindent {\bf  Funding and/or Conflicts of interests/Competing interests:} The authors declare that there are no financial, competing or conflict of interests.

\end{sloppypar}

\bibliographystyle{custom}
\bibliography{ref}
\end{document}